\newcommand*{\R}[0]{\mathbb{R}}
\newcommand*{\N}[0]{\mathbb{N}}
\newcommand*{\dif}[0]{\mathrm{d}} 
\newcommand*{\bsigma}[0]{\boldsymbol{\sigma}}
\newcommand*{\vek}[1]{\mathbf{#1}} 
\newcommand*{\B}[0]{\mathbb{B}}
\newcommand*{\D}[0]{\mathbb{D}}
\newcommand*{\eps}[0]{\varepsilon}
\newcommand*{\ee}[0]{\mathsf{e}}
\newcommand*{\limd}[1]{\xrightarrow[#1\to\infty]{\mathrm{d}}}
\newcommand*{\Ell}[0]{\mathcal{E}} 
\newcommand*{\nEll}[0]{\widetilde{\mathcal{E}}} 
\newcommand*{\inv}[1]{#1^{-1}}
\newcommand*{\gaussp}[1]{\gamma_{#1}} 
\newcommand*{\Colon}[0]{\,:\,} 
\newcommand*{\dx}[0]{\mathrm{d}} 
\DeclareMathOperator{\diag}{diag}
\DeclareMathOperator{\Pro}{\mathbb{P}}
\DeclareMathOperator{\Exp}{\mathbb{E}}
\DeclareMathOperator{\Var}{Var}
\DeclareMathOperator{\Cov}{Cov}
\DeclareMathOperator{\vol}{vol}
\DeclareMathOperator{\Uni}{Unif}
\DeclareMathOperator{\Norm}{\mathcal{N}}
\DeclareMathOperator{\BigO}{\mathcal{O}}
\DeclareMathOperator{\smallO}{\mathit{o}}
\DeclareMathOperator{\BigTh}{\Theta}
\newtheorem{thm}{Theorem}[section]
\newtheorem{cor}[thm]{Corollary}
\newtheorem{lemma}[thm]{Lemma}
\newtheorem{propos}[thm]{Proposition}
\newtheorem{thmalpha}{Theorem}
\theoremstyle{definition}
\newtheorem{rmk}[thm]{Remark}
\begin{document}


\title{\textbf{Spectral flatness and the volume of\\intersections of \(p\)-ellipsoids}}

\vspace{0.5\baselineskip}

\author{Michael Juhos and Joscha Prochno}



\date{}

\maketitle

\begin{abstract}
\small
Motivated by classical works of Schechtman and Schmuckenschl\"ager on intersections of \(\ell_p\)-balls and recent ones in information-based complexity relating random sections of ellipsoids and the quality of random information in approximation problems, we study the threshold behavior of the asymptotic volume of intersections of generalized \(p\)-ellipsoids. The non-critical behavior is determined under a spectral flatness (Wiener entropy) condition on the semi-axes. In order to understand the critical case at the threshold, we prove a central limit theorem for \(q\)-norms of points sampled uniformly at random from a \(p\)-ellipsoid, which is obtained under Noether's condition on the semi-axes.

\vspace*{0.5\baselineskip}
\noindent\textbf{Keywords}. {Central limit theorem, law of large numbers, Noether's condition, \(p\)-ellipsoids, spectral flatness, threshold phenomenon, Wiener entropy}\\
\textbf{MSC}. Primary 52A23, 60F05; Secondary 46B09, 46B20
\end{abstract}


\section{Introduction and main results}

The asymptotic geometry of \(\ell_p^n\)-balls has been studied intensively in the last decades and applications of probabilistic methods have proved to be very powerful for those studies. 
One of the earlier works is a well-known paper by Schechtman and Zinn~\cite{SchechtmanZinn} in which, motivated by a question of V.D. Milman, the authors studied the proportion of volume left in a volume normalized \(\ell_p^n\)-ball after removing a multiple of a volume normalized \(\ell_q^n\)-ball. A fundamental element in their approach is a probabilistic representation of the cone probability measure on an \(\ell_p^n\)-sphere (more precisely, on the positive orthant), which can be easily extended to a representation of the uniform distribution on the whole ball.
Independently, this probabilistic representation had been found by Rachev and R\"uschendorf~\cite{RR1991}, but with a different objective: they proved a Maxwell-principle for \(\ell_p^n\)-spheres, complementing the classical Poincar\'e-Maxwell-Borel lemma for the Euclidean sphere. This representation has again been put to use in a work of Schechtman and Schmuckenschläger~\cite{SS1991}, where the authors investigated the limit \(\lim_{n \to \infty} \vol_n(\D_p^n \cap t \D_q^n)\) and its dependence on the parameter \(t\in(0,\infty)\). Here, we denote by \(\D_p^n\) the volume normalized version of the closed unit ball \(\B_p^n\) of \(\ell_p^n\); recall that for \(0 < p \leq \infty\) and \(x = (x_i)_{i=1}^n \in \R^n\),
\begin{align*}
\lVert x \rVert_p :=%
\begin{cases}
\Bigl( \sum\limits_{i = 1}^n \lvert x_i \rvert^p \Bigr)^{1/p} & \colon p < \infty,\\ \max\limits_{1\leq i\leq n} \lvert x_i \rvert & \colon p = \infty
\end{cases}
\end{align*}
defines a quasi-norm on \(\R^n\), which is a norm whenever \(1 \leq p \leq \infty\).
Using a law of large numbers, they showed that the limit exists and is equal to either zero or one, depending on whether \(t\) is smaller or greater, respectively, than a certain threshold determined solely by the parameters \(p\) and \(q\). The asymptotic behavior at the threshold remained unanswered for nearly a decade until Schmuckenschläger~\cite{Schmu1998} used the Berry--Esseen theorem to show that the limit is \(1/2\) in the case \(p = \infty\), independent of \(q\). Another couple of years later Schmuckenschläger~\cite{Schmu2001} provided a complete answer by proving a central limit theorem for the \(q\)-norm of a vector sampled uniformly at random from the \(\ell_p^n\)-sphere; again the limit is \(1/2\) irrespective of \(p\) and \(q\).

The previous results have been revisited by Kabluchko, Prochno, and Th\"ale in \cite{KPT2019_I} (see also \cite{KPT2019_II}). Not only did they prove (multivariate) central limit theorems for \(q\)-norms of vectors distributed uniformly in \(\B_p^n\), but also non-central limit theorems (i.e., convergence in distribution to 
exponential and Gumbel distributions) for extreme values of \(p\) and \(q\); furthermore they showed that \(\lim_{n \to \infty} \vol_n(\D_p^n \cap t_n \D_q^n)\) can be any number in the interval \((0, 1)\) (not just $1/2$) with an appropriate sequence \((t_n)_{n\in\N}\) converging to the threshold. They also present results concerning the intersection of more than two balls, intersections of ``neighboring'' balls, and investigate large deviation principles for \(q\)-norms of random vectors sampled from \(\ell_p^n\)-balls, where the authors have taken up the thread laid by Gantert, Kim, and Ramanan~\cite{GKR2017}. The topic of large deviation principles in geometric settings has been researched much since then, but the present paper does not touch upon it.

Another notable generalization of the results of Schechtman and Schmuckenschläger has recently been accomplished by Kabluchko and Prochno~\cite{KP2021}: they have considered the intersection of Orlicz balls
\begin{equation*}
\B_M^n(nR) := \biggl\{ (x_1, \dotsc, x_n) \in \R^n \Colon \sum_{i = 1}^n M(x_i) \leq n R \biggr\},
\end{equation*}
where \(R \in (0, \infty)\), and \(M \colon \R \to [0, \infty)\) is an Orlicz-function, that is, an even, convex function with \(M(x) = 0\) iff \(x = 0\). Like in the case of \(\ell_p^n\)-balls there is a threshold for the parameter \(R\), depending only on the two Orlicz functions involved (one for each ball), such that the limit is either zero or one; the authors conjecture that the limit equals \(1/2\) at the threshold. Whereas the proof of Schechtman and Schmuckenschläger only needs a (weak) law of large numbers and builds upon the Schechtman--Zinn probabilistic representation, Kabluchko and Prochno require completely different and finer tools. The lack in a corresponding probabilistic representation for Orlicz balls is overcome by means of the maximum entropy principle from statistical mechanics and Petrov's sharp version of Cramér's large deviation theorem \cite{P1965}, which says that, under suitable conditions, the probability in the weak law of large numbers converges exponentially fast towards zero with a specified rate. 

Motivated by the previous body of research and by recent works in information-based complexity, where the asymptotic geometry of ellipsoids is related to questions about the quality of random information in approximation problems \cite{HKNPU2020, HKNPV2021,HPS2021} and because volumes of intersections appear naturally in tractability questions for multivariate numerical integration of certain classes of smooth functions \cite{HPU2019}, the present paper undertakes another generalization of the results of Schechtman and Schmuckenschläger, where, instead of the isotropic \(\ell_p^n\)-balls, we shall investigate \(p\)-ellipsoids, which are defined as follows: for \(0 < p \leq \infty\), \(n \in \N\), and \(\sigma = (\sigma_1, \dotsc, \sigma_n) \in (0, \infty)^n\), 
\begin{equation*}
\Ell_{p, \sigma}^n := \biggl\{(x_1, \dotsc, x_n) \in \R^n \Colon \sum_{i = 1}^n \Bigl\lvert \frac{x_i}{\sigma_i} \Bigr\rvert^p \leq 1 \biggr\}
\end{equation*}
if \(p < \infty\), or
\begin{equation*}
\Ell_{\infty, \sigma}^n := \biggl\{(x_1, \dotsc, x_n) \in \R^n \Colon \max_{1\leq i\leq n } \Bigl\lvert \frac{x_i}{\sigma_i} \Bigr\rvert \leq 1 \biggr\}
\end{equation*}
if \(p = \infty\), and we refer to \(\Ell_{p, \sigma}^n\) as the \emph{\(p\)-ellipsoid with semi-axes \(\sigma_1, \dotsc, \sigma_n\)}. We shall denote by \(\nEll_{p, \sigma}^n\) the unit volume dilation of \(\Ell_{p, \sigma}^n\) (i.e., \(\vol_n(\nEll_{p, \sigma}^n) = 1\)) and study the asymptotic of \(\lim_{n \to \infty} \vol_n(\nEll_{p, \sigma_n}^n \cap t \nEll_{q, \tau_n}^n)\), where \(p\), \(q\), and \(t\) are fixed, and where \(\sigma_n, \tau_n \in (0, \infty)^n\) for each \(n \in \N\). 

\subsection{The main results}
\label{sec:main_results}

We shall now present our main results on the threshold behavior of the asymptotic volume of intersections of \(p\)-ellipsoids. Note that by a simple linear transformation argument it is enough to consider the intersection of volume normalized \(\ell_p^n\)-balls with volume normalized \(q\)-ellipsoids (we shall elaborate in more detail in Remark~\ref{rmk:fqsigma_neq_zero} below). As we will see, the result in the non-critical case is proved under a so-called spectral flatness condition, also known as Wiener entropy, on the semi-axes of the ellipsoid.

Before we state this and the other results, let us briefly introduce some notation. For any \(p \in (0, \infty]\), the \emph{\(p\)-generalized Gaussian distribution} (\(p\)-Gaussian distribution for short) \(\gaussp{p}\) on \(\R\) is defined by
\begin{equation*}
\dif\gaussp{p}(x) :=%
\begin{cases}
\frac{1}{2 p^{1/p} \, \Gamma(\frac{1}{p} + 1)} \, \ee^{-\lvert x \rvert^p/p} \, \dif x & \colon p < \infty,\\
\frac{1}{2} \, 1_{[-1, 1]}(x) \, \dif x & \colon p = \infty.
\end{cases}
\end{equation*}
We introduce the following shorthands for the moments of \(\gaussp{p}\): let \(X \sim \gaussp{p}\) and \(q, r \in [0, \infty]\), then
\begin{equation*}
M_p(q) := \Exp[\lvert X \rvert^q], \qquad V_p(q) := \Var[\lvert X \rvert^q], \qquad C_p(q, r) := \Cov[\lvert X \rvert^q, \lvert X \rvert^r],
\end{equation*}
and we adopt the conventions \(M_\infty(\infty) := 0\), \(V_\infty(\infty) := 0\), and \(C_\infty(\infty, \infty) := C_\infty(\infty, q) := 0\); also note \(M_p(p) = 1\) and \(V_p(p) = p\) for \(p < \infty\). In what follows \(\bsigma\) shall always denote an infinite triangular array with positive real entries, namely \(\bsigma = ((\sigma_{n, i})_{i=1}^n)_{n \in \N} \in \prod_{n \in \N} (0, \infty)^n\). For \(n \in \N\) and \(p \in (0, \infty]\) let \(r_{n, p} := \vol_n(\B_p^n)^{-1/n}\) denote the radius of the volume-normalized \(\ell_p^n\)-ball \(\D_p^n\).

The first result is a generalization of a celebrated theorem for \(\ell_p^n\)-balls obtained by Schechtman and Schmuckenschläger in \cite{SS1991}.

\begin{thmalpha}\label{thm:threshold_theorem}
Let \(p \in (0, \infty]\) and \(q \in (0, \infty)\), and let $\bsigma = ((\sigma_{n, i})_{i=1}^n)_{n \in \N} \in \prod_{n \in \N} (0, \infty)^n$ be given. Assume that the limit
\begin{equation*}
F_{q, \bsigma} := \lim_{n \to \infty}\biggl[ \biggl( \prod_{i = 1}^n \sigma_{n, i} \biggr)^{\frac{1}{n}} \biggl( \frac{1}{n} \sum_{i = 1}^n \sigma_{n, i}^{-q} \biggr)^{\frac{1}{q}} \biggr]
\end{equation*}
exists in \([0, \infty]\) and that 
\begin{equation}\label{eq:sigma}
\lim_{n \to \infty} \biggl[ \biggl( \prod_{i = 1}^n \sigma_{n, i} \biggr)^{\frac{2 q}{n}} \frac{1}{n^2} \sum_{i = 1}^n \sigma_{n, i}^{-2 q} \biggr] = 0.
\end{equation}
Define 
\begin{equation*}
A_{p, q} := M_p(q)^{-\frac{1}{q}} \lim_{n \to \infty} \frac{n^{-\frac{1}{q}} \, r_{n, q}}{n^{-\frac{1}{p}} \, r_{n, p}} =%
\begin{cases}
\frac{\Gamma(1 + \frac{1}{p})^{1 + 1/q}}{\Gamma(1 + \frac{1}{q}) \Gamma(\frac{q + 1}{p})^{1/q}} \, \ee^{\frac{1}{p} - \frac{1}{q}} \, \bigl( \frac{p}{q} \bigr)^{\frac{1}{q}} & \colon p < \infty\\
\frac{1}{\Gamma(1 + \frac{1}{q})} \bigl( \frac{q + 1}{q \ee} \bigr)^{\frac{1}{q}} & \colon p = \infty.
\end{cases}
\end{equation*}
Then, for all \(t \in [0, \infty)\),
\begin{equation*}
\lim_{n \to \infty} \vol_n\big(\D_p^n \cap t \nEll_{q, \sigma_n}^n\big) =%
\begin{cases}
0 & \colon t A_{p, q} < F_{q, \bsigma}\\
1 & \colon t A_{p, q} > F_{q, \bsigma},
\end{cases}
\end{equation*}
where we have set \(\sigma_n := (\sigma_{n, i})_{i=1}^n\).
\end{thmalpha}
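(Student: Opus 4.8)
The plan is to follow the classical Schechtman–Schmuckenschläger strategy, adapted to the ellipsoidal setting via the Schechtman–Zinn probabilistic representation. First I would record the representation of the uniform distribution on $\D_p^n$: if $X = (X_1, \dotsc, X_n)$ is sampled uniformly from $\B_p^n$ then $X \overset{d}{=} U^{1/n} \frac{G}{\lVert G\rVert_p}$ where $G = (G_1,\dotsc,G_n)$ has i.i.d.\ $\gaussp{p}$ coordinates and $U$ is an independent uniform on $[0,1]$; rescaling by $r_{n,p}$ gives the uniform distribution on $\D_p^n$. The event $\D_p^n \subseteq$ (or $\not\subseteq$) \,$t\nEll_{q,\sigma_n}^n$ in the volume sense translates, after the volume normalization of the ellipsoid is written out explicitly using $r_{n,q}$ and the product $\prod_i \sigma_{n,i}$, into the probabilistic statement
\begin{equation*}
\vol_n\big(\D_p^n \cap t\nEll_{q,\sigma_n}^n\big) = \Pro\Bigl[ \Bigl(\sum_{i=1}^n \Bigl\lvert \tfrac{X_i}{\sigma_{n,i}} \Bigr\rvert^q\Bigr)^{1/q} \le t \cdot c_{n}\Bigr],
\end{equation*}
where $c_n$ collects the normalizing radii and the geometric-mean factor, so that $c_n$ behaves asymptotically like $A_{p,q}^{-1} F_{q,\bsigma}^{-1}$ times the appropriate powers — i.e.\ the stated threshold is exactly the place where $t A_{p,q} = F_{q,\bsigma}$.

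Next I would substitute the representation and strip off the harmless factor $U^{1/n} \to 1$ (a.s.\ and in probability), reducing the problem to the behaviour of
\begin{equation*}
\frac{1}{\lVert G\rVert_p} \Bigl( \sum_{i=1}^n \Bigl\lvert \tfrac{G_i}{\sigma_{n,i}} \Bigr\rvert^q \Bigr)^{1/q}.
\end{equation*}
By the (weak) law of large numbers, $\tfrac1n \lVert G\rVert_p^p \to M_p(p) = 1$, so $n^{-1/p}\lVert G\rVert_p \to 1$ in probability. The core is then a law of large numbers for the weighted sum $\tfrac1n \sum_{i=1}^n \sigma_{n,i}^{-q}\lvert G_i\rvert^q$. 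Its mean is $M_p(q)\cdot \tfrac1n\sum_i \sigma_{n,i}^{-q}$, and its variance is $V_p(q)\cdot \tfrac1{n^2}\sum_i \sigma_{n,i}^{-q\cdot 2}$ — wait, more precisely $\tfrac1{n^2}\sum_i \sigma_{n,i}^{-2q}V_p(q)$; this is where hypothesis~\eqref{eq:sigma} enters, guaranteeing (after multiplying through by the geometric-mean power, which is what makes the whole quantity scale-invariant and finite) that the variance of the \emph{normalized} weighted $q$-th moment tends to $0$. Hence by Chebyshev the normalized $q$-norm of $X$ concentrates at its deterministic limit, which one computes to be $A_{p,q}^{-1} F_{q,\bsigma}^{-1} \cdot$ (the relevant radius ratio), i.e.\ exactly the reciprocal of what appears on the right side above; a short computation with the explicit Gamma-function values for $r_{n,p}$, $r_{n,q}$, and $M_p(q)$ identifies the limit and produces the closed form for $A_{p,q}$ in both the $p<\infty$ and $p=\infty$ cases.

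**Finishing the argument:** once concentration is established, if $tA_{p,q} > F_{q,\bsigma}$ then $t c_n$ eventually exceeds the limiting value of the $q$-norm by a fixed margin, so the probability tends to $1$; if $tA_{p,q} < F_{q,\bsigma}$ it is eventually below, so the probability tends to $0$. One must handle the degenerate endpoints of $F_{q,\bsigma} \in \{0,\infty\}$ separately but trivially (if $F_{q,\bsigma} = 0$ only the ``$=1$'' regime is nonempty for $t>0$, and $t=0$ gives volume $0$; if $F_{q,\bsigma}=\infty$ only the ``$=0$'' regime occurs), and also the $t=0$ boundary case. The main obstacle I anticipate is not conceptual but bookkeeping: one needs to keep track of \emph{three} $n$-dependent normalizations simultaneously — the radius $r_{n,p}$ of $\D_p^n$, the radius $r_{n,q}$ and the semi-axis product $\prod_i\sigma_{n,i}$ inside the definition of $\nEll_{q,\sigma_n}^n$, and the factor $n^{-1/p}\lVert G\rVert_p$ from the representation — and verify that the powers of $n$, the $\ee^{1/p-1/q}$ and $(p/q)^{1/q}$ factors, and the Gamma-function ratios all assemble precisely into $A_{p,q}$ via Stirling's formula; this is where sign and exponent errors are easy to make and where I would be most careful. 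The spectral-flatness/Wiener-entropy condition is used only implicitly here, through the assumed existence of $F_{q,\bsigma}$ together with \eqref{eq:sigma}, which together are exactly a ``flatness'' requirement on $\bsigma$ ensuring the geometric mean and the $q$-th power mean stay comparable and the fluctuations vanish.
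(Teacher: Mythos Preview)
Your proposal is correct and follows essentially the same approach as the paper: the Schechtman--Zinn representation, the strong law for the denominator $n^{-1/p}\lVert G\rVert_p$, and a Chebyshev-based weak law of large numbers for the triangular array of weighted $q$-th powers (with condition~\eqref{eq:sigma} providing the variance control) combine to show the random variable concentrates at $F_{q,\bsigma}/A_{p,q}$. The only cosmetic difference is that the paper makes the scale-invariance explicit by passing to the normalized semi-axes $\tau_{n,i} := (\prod_j \sigma_{n,j})^{-1/n}\sigma_{n,i}$, which is the same bookkeeping device you allude to when speaking of multiplying through by the geometric-mean power.
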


\begin{rmk}\label{rmk:fqsigma_neq_zero}
(1)~Theorem~\ref{thm:threshold_theorem} and Corollary~\ref{cor:threshold_theorem_crit} (below) also cover the case of the intersection of a \(p\)-ellipsoid with a \(q\)-ellipsoid, because
\begin{align*}
\vol_n(\nEll_{p, \sigma}^n \cap t \nEll_{q, \tau}^n) &= \vol_n(\det(\Sigma)^{-1/n} \Sigma \D_p^n \cap t \det(T)^{-1/n} T \D_q^n)\\
&= \vol_n(\D_p^n \cap t \det(\inv{\Sigma} T)^{-1/n} (\inv{\Sigma} T) \D_q^n)\\
&= \vol_n(\D_p^n \cap t \nEll_{q, \rho}^n),
\end{align*}
where \(\sigma = (\sigma_i)_{i = 1}^n\), \(\tau = (\tau_i)_{i = 1}^n\), \(\Sigma = \diag(\sigma)\), \(T = \diag(\tau)\), and \(\rho = (\inv{\sigma_i} \tau_i)_{i = 1}^n\). The sufficient conditions must be fulfilled by \(\rho\) then.

\noindent
(2)~By the inequality of the arithmetic and geometric means we see
\begin{equation*}
\biggl( \prod_{i = 1}^n \sigma_{n, i} \biggr)^{\frac{1}{n}} = \biggl( \prod_{i = 1}^n \sigma_{n, i}^{-q} \biggr)^{\frac{1}{n} \bigl( -\frac{1}{q} \bigr)} \geq \biggl( \frac{1}{n} \sum_{i = 1}^n \sigma_{n, i}^{-q} \biggr)^{-\frac{1}{q}},
\end{equation*}
where equality holds true iff \(\sigma_{n, 1} = \dotsb = \sigma_{n, n}\); hence one sees \(F_{q, \bsigma} \geq 1\) actually.
\end{rmk}

Our second theorem is a central limit theorem (CLT) for a properly scaled \(q\)-norm of a vector sampled uniformly at random from a \(p\)-ellipsoid and obtained under the so-called Noether condition on the semi-axes. It thus partly generalizes \cite[Theorem~1.1]{KPT2019_I}, which treats several distinct values of \(q\) simultaneously and in addition contains two non-central limit theorems. 

\begin{thmalpha}\label{thm:clt_ellipsoid}
Let \(p\), \(q\), \(\bsigma\), \(F_{q, \bsigma}\) be given as in Theorem~\ref{thm:threshold_theorem}. Furthermore, let \(\bsigma\) satisfy~\eqref{eq:sigma} and Noether's condition
\begin{equation}\label{eq:feller_sigma}
\lim_{n \to \infty} \frac{\max\limits_{1 \leq i \leq n} \sigma_{n, i}^{-2 q}}{\sum_{i = 1}^n \sigma_{n, i}^{-2 q}} = 0,
\end{equation}
and in the case \(p < \infty\) let the following limit exist,
\begin{equation*}
G_{q, \bsigma} := \lim_{n \to \infty} \frac{\sum_{i = 1}^n \sigma_{n, i}^{-q}}{\sqrt{n} \bigl( \sum_{i = 1}^n \sigma_{n, i}^{-2 q} \bigr)^{1/2}}.
\end{equation*}
For each \(n\in\N\), let \(Z_n \sim \Uni(\B_p^n)\) and \(\Sigma_n := \diag((\sigma_{n, i})_{i=1}^n)\). If \(p \neq q\) or \(G_{q, \bsigma} < 1\), then,
\begin{equation*}
\Biggl( \frac{\sum_{i = 1}^n \sigma_{n, i}^{-q}}{\bigl( \sum_{i = 1}^n \sigma_{n, i}^{-2 q} \bigr)^{1/2}} \biggl( \frac{n^{1/p} \, \lVert \inv{\Sigma_n} Z_n \rVert_q}{M_p(q)^{1/q} \, (\sum_{i = 1}^n \sigma_{n, i}^{-q})^{1/q}} - 1 \biggr) \Biggr)_{n \in \N} \xrightarrow{\text{\upshape d}} \Norm(0, s^2)
\end{equation*}
with
\begin{equation*}
s^2 := \frac{V_p(q)}{q^2 M_p(q)^2} - \frac{2 C_p(p, q) G_{q, \bsigma}^2}{p q M_p(q)} + \frac{G_{q, \bsigma}^2}{p}.
\end{equation*}
\end{thmalpha}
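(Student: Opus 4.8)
The plan is to exploit the Schechtman--Zinn probabilistic representation of the uniform distribution on $\B_p^n$: if $Z_n\sim\Uni(\B_p^n)$ then $Z_n\stackrel{d}{=}U^{1/n}\,\xi/\|\xi\|_p$ where $\xi=(\xi_1,\dots,\xi_n)$ has i.i.d.\ $\gaussp{p}$ coordinates and $U\sim\Uni[0,1]$ is independent (for $p=\infty$ the representation is even simpler since $\B_\infty^n$ is a cube). Plugging this in, $n^{1/p}\|\inv{\Sigma_n}Z_n\|_q = U^{1/n} n^{1/p}\,\bigl(\sum_i \sigma_{n,i}^{-q}|\xi_i|^q\bigr)^{1/q}\big/\|\xi\|_p$. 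Since $U^{1/n}\to 1$ in probability fast enough (indeed $n(1-U^{1/n})$ is tight), the factor $U^{1/n}$ contributes nothing to the CLT after the rescaling by $\bigl(\sum_i\sigma_{n,i}^{-q}\bigr)/\bigl(\sum_i\sigma_{n,i}^{-2q}\bigr)^{1/2}$, which grows like $\sqrt n$ under Noether's condition; this needs a short argument via Slutsky. So the real task is a joint CLT for the pair
\[
S_n := \sum_{i=1}^n \sigma_{n,i}^{-q}\,|\xi_i|^q, \qquad T_n := \sum_{i=1}^n |\xi_i|^p = \|\xi\|_p^p,
\]
suitably centered and normalized (for $p=\infty$, $T_n$ is deterministic and only $S_n$ matters, so $G_{q,\bsigma}$ drops out and $s^2$ reduces correctly to $V_\infty(q)/(q^2M_\infty(q)^2)$).

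The key step is therefore a bivariate Lindeberg/Lyapunov CLT for the triangular array of independent (but not identically distributed, because of the weights $\sigma_{n,i}^{-q}$) vectors $\bigl(\sigma_{n,i}^{-q}|\xi_i|^q,\ |\xi_i|^p\bigr)$. First I would compute the mean vector, $\Exp S_n = M_p(q)\sum_i\sigma_{n,i}^{-q}$ and $\Exp T_n = n M_p(p) = n$, and the covariance matrix: $\Var S_n = V_p(q)\sum_i\sigma_{n,i}^{-2q}$, $\Var T_n = nV_p(p)=np$, and $\Cov(S_n,T_n)=C_p(p,q)\sum_i\sigma_{n,i}^{-q}$. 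Noether's condition~\eqref{eq:feller_sigma} is exactly the Lindeberg-type condition ensuring the first coordinate satisfies a CLT (the $q$-th and $p$-th moments of $|\xi_i|$ are finite for all finite $p,q$, so Lyapunov with, say, a $(2+\delta)$-moment works), and the second coordinate satisfies the ordinary CLT since it is a sum of i.i.d.\ terms with finite variance. By the Cramér--Wold device I would obtain joint asymptotic normality of
\[
\Biggl( \frac{S_n - M_p(q)\sum_i\sigma_{n,i}^{-q}}{\bigl(\sum_i\sigma_{n,i}^{-2q}\bigr)^{1/2}},\ \frac{T_n-n}{\sqrt n} \Biggr)
\xrightarrow{\text{d}} \Norm\Bigl(0,\begin{psmallmatrix}V_p(q) & C_p(p,q)G_{q,\bsigma}\\ C_p(p,q)G_{q,\bsigma} & p\end{psmallmatrix}\Bigr),
\]
the off-diagonal limit being $C_p(p,q)\lim_n \sum_i\sigma_{n,i}^{-q}/\bigl(\sqrt n(\sum_i\sigma_{n,i}^{-2q})^{1/2}\bigr)=C_p(p,q)G_{q,\bsigma}$ by the definition of $G_{q,\bsigma}$. (Here I use a $2\times2$ matrix; if \texttt{psmallmatrix} is unavailable I would write it out with \texttt{pmatrix} instead.)

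The final step is the delta method. Write the quantity of interest as $\varphi(S_n/n^{?},T_n/n)$ with $\varphi(x,y)=x^{1/q}y^{-1/p}$ up to the normalizing constants; more precisely, $n^{1/p}\|\inv{\Sigma_n}Z_n\|_q\,\big/\,\bigl(M_p(q)^{1/q}(\sum_i\sigma_{n,i}^{-q})^{1/q}\bigr)$ equals $U^{1/n}$ times $\bigl(S_n/(M_p(q)\sum_i\sigma_{n,i}^{-q})\bigr)^{1/q}\bigl(T_n/n\bigr)^{-1/p}$, and both ratios converge to $1$ with fluctuations of order $\bigl(\sum_i\sigma_{n,i}^{-2q}\bigr)^{1/2}/\sum_i\sigma_{n,i}^{-q}\asymp n^{-1/2}$. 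A first-order Taylor expansion of $(1+a)^{1/q}(1+b)^{-1/p}\approx 1+a/q-b/p$ then gives, after multiplying by $\sum_i\sigma_{n,i}^{-q}/\bigl(\sum_i\sigma_{n,i}^{-2q}\bigr)^{1/2}$, the linear combination $\tfrac1q\cdot\tfrac{S_n-\Exp S_n}{\sqrt{\Var S_n}}\cdot\tfrac{1}{M_p(q)} - \tfrac1p\cdot G_{q,\bsigma}\cdot\tfrac{T_n-n}{\sqrt{np}}\cdot\sqrt p$, whose limiting variance is
\[
\frac{V_p(q)}{q^2M_p(q)^2} \;-\; \frac{2C_p(p,q)G_{q,\bsigma}^2}{pqM_p(q)} \;+\; \frac{G_{q,\bsigma}^2}{p},
\]
i.e.\ exactly $s^2$. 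The hypothesis $p\neq q$ or $G_{q,\bsigma}<1$ is what guarantees $s^2>0$, so that the limit is a genuine (non-degenerate) Gaussian: when $p=q$ one has $M_p(q)=1$, $V_p(q)=p$, $C_p(p,q)=p$, and $s^2=1/p-2G^2/p+G^2/p=(1-G^2)/p$, which is positive precisely when $G<1$.

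The main obstacle I anticipate is not any single step in isolation but the bookkeeping needed to verify that the normalizations of $S_n$ and $T_n$ are genuinely comparable — that is, that condition~\eqref{eq:sigma} together with Noether's condition forces $\sum_i\sigma_{n,i}^{-2q}$ to grow slowly enough relative to $\sum_i\sigma_{n,i}^{-q}$ and relative to $n$ so that the $U^{1/n}$ factor, the Taylor remainder terms (which are of order $a^2$, $b^2$, $ab$), and any cross terms are all negligible after multiplication by the blow-up factor $\asymp\sqrt n$; handling $p=\infty$ as a clean degenerate special case (no $T_n$, Noether's condition alone) and keeping the constants $M_p,V_p,C_p$ straight in the final variance computation are the other places where care is required.
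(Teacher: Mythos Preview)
Your proposal is correct and follows essentially the same route as the paper: Schechtman--Zinn representation, a joint Lindeberg--Feller CLT (via Cram\'er--Wold) for the pair $\bigl(\sum_i \sigma_{n,i}^{-q}|\xi_i|^q,\ \sum_i|\xi_i|^p\bigr)$ with exactly the covariance matrix you write down, and then a first-order Taylor expansion of $(1+a)^{1/q}(1+b)^{-1/p}$ to pass to the ratio. The only cosmetic difference is that where you invoke the delta method, the paper makes the Taylor step rigorous by first applying the Skorokhod representation to upgrade the joint CLT to almost-sure convergence; and the paper verifies Lindeberg directly rather than via a Lyapunov moment, but your Lyapunov suggestion works equally well since $|\xi_1|$ has all moments finite.
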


\begin{rmk}\label{rmk:gqsigma}
(1)~If it exists, we know \(G_{q, \bsigma} \in [0, 1]\) because of the comparison of the \(\ell_1\)- and \(\ell_2\)-norms on \(\R^n\). Moreover, if \(F_{q, \bsigma} < \infty\), then because of
\begin{equation*}
\frac{\sum_{i = 1}^n \sigma_{n, i}^{-q}}{\sqrt{n} \bigl( \sum_{i = 1}^n \sigma_{n, i}^{-2 q} \bigr)^{1/2}} = \frac{\bigl( \prod_{i = 1}^n \sigma_{n, i} \bigr)^{q/n} \, \frac{1}{n} \sum_{i = 1}^n \sigma_{n, i}^{-q}}{\bigl( \bigl( \prod_{i = 1}^n \sigma_{n, i} \bigr)^{2 q/n} \, \frac{1}{n} \sum_{i = 1}^n \sigma_{n, i}^{-2 q} \bigr)^{1/2}},
\end{equation*}
\(G_{q, \bsigma}\) exists if and only if
\begin{equation*}
F_{2 q, \bsigma}^{2 q} = \lim_{n \to \infty} \biggl( \prod_{i = 1}^n \sigma_{n, i} \biggr)^{\frac{2 q}{n}} \, \frac{1}{n} \sum_{i = 1}^n \sigma_{n, i}^{-2 q}
\end{equation*}
exists, and then
\begin{equation*}
G_{q, \bsigma} = \frac{F_{q, \bsigma}^q}{F_{2 q, \bsigma}^q}.
\end{equation*}
Note that by Remark~\ref{rmk:fqsigma_neq_zero}, (2), \(F_{2 q, \bsigma} \geq 1\), and hence division by zero is precluded.

\noindent
(2)~Let \(X \sim \gaussp{p}\), then we can express
\begin{equation*}
s^2 = \Var\Bigl[ \frac{\lvert X \rvert^q}{q M_p(q)} - \frac{G_{q, \bsigma}^2 \lvert X \rvert^p}{p} \Bigr] + \frac{G_{q, \bsigma}^2 (1 - G_{q, \bsigma}^2)}{p}
\end{equation*}
and therefore \(s = 0\) if and only if \(p = q\) and \(G_{q, \bsigma} = 1\), which is what we have excluded.
\end{rmk}

The CLT in Theorem~\ref{thm:clt_ellipsoid} allows us to determine the asymptotic behavior in the critical case at the threshold \(t_{\text{\upshape crit}} = \inv{A_{p, q}} F_{q, \bsigma}\), which is not covered by Theorem~\ref{thm:threshold_theorem}. It presents the generalization of Theorem~2.1 in Schmuckenschläger~\cite{Schmu1998} for \(p = \infty\) and of Theorem~3.2 in Schmuckenschläger~\cite{Schmu2001} for \(p < \infty\).

\begin{cor}\label{cor:threshold_theorem_crit}
Let \(p\), \(q\), \(\bsigma\), \(F_{q, \bsigma}\), \(G_{q, \bsigma}\), \(s\) be given as in Theorem~\ref{thm:clt_ellipsoid}, but with \(F_{q, \bsigma} < \infty\). Define, for \(n \in \N\),
\begin{equation*}
h_n := \frac{\bigl( \prod_{i = 1}^n \sigma_{n, i} \bigr)^{1/n} \bigl( \frac{1}{n} \sum_{i = 1}^n \sigma_{n, i}^{-q} \bigr)^{1/q}}{F_{q, \bsigma}}.
\end{equation*}
Then in the critical case \(t_{\text{\upshape crit}} = \inv{A_{p, q}} \, F_{q, \bsigma}\), if
\begin{equation*}
z := \lim_{n \to \infty} \frac{\sum_{i = 1}^n \sigma_{n, i}^{-q}}{\bigl( \sum_{i = 1}^n \sigma_{n, i}^{-2 q} \bigr)^{1/2}} (h_n - 1) \in [-\infty, \infty]
\end{equation*}
exists, it holds true that
\begin{equation*}
\lim_{n \to \infty} \vol_n(\D_p^n \cap t_{\text{\upshape crit}} \nEll_{q, \sigma_n}^n) = \Phi\Bigl( -\frac{z}{s} \Bigr),
\end{equation*}
where \(\Phi\) is the CDF of the standard normal distribution.
\end{cor}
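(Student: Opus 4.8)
The plan is, following the method of Schmuckenschl\"ager, to rewrite the intersection volume as a probability to which Theorem~\ref{thm:clt_ellipsoid} applies directly. Since $\vol_n(\D_p^n) = 1$ and $\D_p^n = r_{n, p} \B_p^n$, for $Z_n \sim \Uni(\B_p^n)$ the vector $r_{n, p} Z_n$ is uniformly distributed on $\D_p^n$, so
\begin{equation*}
\vol_n\bigl(\D_p^n \cap t_{\text{\upshape crit}} \nEll_{q, \sigma_n}^n\bigr) = \Pro\bigl(r_{n, p} Z_n \in t_{\text{\upshape crit}} \nEll_{q, \sigma_n}^n\bigr).
\end{equation*}
Since $\nEll_{q, \sigma_n}^n = \bigl( \prod_{i=1}^n \sigma_{n, i} \bigr)^{-1/n} r_{n, q}\, \Sigma_n \B_q^n$ (this set has volume $1$), applying $\inv{\Sigma_n}$ turns the event into $\bigl\{\lVert \inv{\Sigma_n} Z_n \rVert_q \le t_{\text{\upshape crit}} \bigl( \prod_{i=1}^n \sigma_{n, i} \bigr)^{-1/n} r_{n, q}\, r_{n, p}^{-1}\bigr\}$. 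Dividing both sides by $M_p(q)^{1/q} \bigl( \sum_{i=1}^n \sigma_{n, i}^{-q} \bigr)^{1/q}$, subtracting $1$, and multiplying by the positive factor $a_n := \bigl( \sum_{i=1}^n \sigma_{n, i}^{-q} \bigr)\big/\bigl( \sum_{i=1}^n \sigma_{n, i}^{-2q} \bigr)^{1/2}$, I can express the volume as $\Pro(\xi_n \le b_n)$, where $\xi_n$ is precisely the random variable on the left-hand side of the convergence in Theorem~\ref{thm:clt_ellipsoid}. Setting $\alpha_n := M_p(q)^{-1/q} (n^{-1/q} r_{n, q})/(n^{-1/p} r_{n, p})$, $\delta_n := \alpha_n/A_{p, q} - 1$, and $z_n := a_n(h_n - 1)$, we have $z_n \to z$ and $\delta_n \to 0$ (as $\alpha_n \to A_{p, q}$), and using $t_{\text{\upshape crit}} = \inv{A_{p, q}} F_{q, \bsigma}$, the identity $\bigl( \prod_{i=1}^n \sigma_{n, i} \bigr)^{1/n} \bigl( \frac{1}{n} \sum_{i=1}^n \sigma_{n, i}^{-q} \bigr)^{1/q} = F_{q, \bsigma} h_n$, and $n^{1/p - 1/q} r_{n, q}/r_{n, p} = (n^{-1/q} r_{n, q})/(n^{-1/p} r_{n, p})$, a short computation gives
\begin{equation*}
b_n = a_n \Bigl( \frac{\alpha_n / A_{p, q}}{h_n} - 1 \Bigr) = \frac{a_n \delta_n - z_n}{h_n}.
\end{equation*}
By Theorem~\ref{thm:clt_ellipsoid}, $\xi_n \xrightarrow{\text{\upshape d}} \Norm(0, s^2)$, and $s > 0$ by Remark~\ref{rmk:gqsigma}(2) because the excluded case ($p = q$ and $G_{q, \bsigma} = 1$) does not occur.

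It then remains to show $b_n \to -z$ in $[-\infty, \infty]$ and to pass to the limit in the probability. Combined with $h_n \to 1$ and the Cauchy--Schwarz bound $a_n \le \sqrt{n}$, the fact $\delta_n \to 0$ gives $a_n \delta_n \to 0$ as soon as $\delta_n = \smallO(n^{-1/2})$, and hence $b_n = (a_n \delta_n - z_n)/h_n \to -z$. A routine Slutsky-type argument then concludes: for finite $z$ one sandwiches $\Pro(\xi_n \le b_n)$ between $\Pro(\xi_n \le -z - \eps)$ and $\Pro(\xi_n \le -z + \eps)$ for each $\eps > 0$ and uses the continuity of $x \mapsto \Phi(x/s)$, while for $z = \pm\infty$ one compares with $\Pro(\xi_n \le \mp K)$ and lets $K \to \infty$; in every case $\Pro(\xi_n \le b_n) \to \Phi(-z/s)$, which is the assertion.

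The only step that is not pure bookkeeping --- and hence the main obstacle --- is the estimate $\delta_n = \smallO(n^{-1/2})$, that is, that the quotient $(n^{-1/q} r_{n, q})/(n^{-1/p} r_{n, p})$ approaches its limit $A_{p, q} M_p(q)^{1/q}$ fast enough. For $p < \infty$ one has $r_{n, p} = \Gamma(1 + n/p)^{1/n}/(2 \Gamma(1 + 1/p))$, and the second-order Stirling expansion yields
\begin{equation*}
n^{-1/p} r_{n, p} = \frac{(p\ee)^{-1/p}}{2 \Gamma(1 + 1/p)}\, \exp\Bigl( \frac{\ln(2\pi n/p)}{2n} + \BigO(n^{-2}) \Bigr),
\end{equation*}
while $r_{n, \infty} = \tfrac12$ does not depend on $n$; in the quotient the $\tfrac{\ln(2\pi n)}{2n}$-corrections cancel (when $p < \infty$), leaving $\delta_n = \BigO(n^{-1} \ln n) = \smallO(n^{-1/2})$. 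Everything else --- the affine change of variables, the algebraic simplification of $b_n$, and the convergence-of-distribution-functions lemma --- is routine.
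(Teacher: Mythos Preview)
Your proof is correct and follows essentially the same route as the paper's: rewrite the volume as $\Pro(\xi_n\le b_n)$ with $\xi_n$ the statistic from Theorem~\ref{thm:clt_ellipsoid}, use $a_n\le\sqrt{n}$ together with the rate $\delta_n=\BigO(n^{-1}\log n)$ coming from Stirling's formula to get $b_n\to -z$, and conclude by Slutsky. The paper states the slightly sharper $\delta_n=\BigO(1/n)$ (citing \cite{KPT2019_I}), which indeed holds when $p<\infty$; your $\BigO(n^{-1}\log n)$ is the right bound for $p=\infty$ and in any case more than enough.
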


\begin{rmk}\label{rmk:two_ellipsoids}
Different from the case of balls treated in~\cite[Corollary~2.1]{KPT2019_I}, for ellipsoids it makes sense to seperate the critical from the non-critical case for \(t\) (the present Corollary~\ref{cor:threshold_theorem_crit} and Theorem~\ref{thm:threshold_theorem}, respectively), because the latter needs weaker premises compared to the former as is evident from our formulation above.
\end{rmk}

\subsection*{Organization of the paper}

The rest of this paper is structured as follows. In Section~\ref{sec:prelim} we discuss notation and preliminaries. Examples of semi-axes satisfying the assumptions in our theorems are presented in Section~\ref{sec:examples}. In Sections~\ref{sec:proofs} and~\ref{sec:proof_spec_sigma} we provide the proofs for our main results and of the examples, respectively.


\section{Notation and preliminaries}\label{sec:prelim}

We shall now present the notation used throughout this paper followed by a short subsection discussing the uniform distribution on \(p\)-ellipsoids.

\subsection{Notation}

The Landau-symbols will be used in the proofs: for real sequences \((a_n)_{n\in\N}\) and \((b_n)_{n\in\N}\) define
\begin{align*}
a_n = \BigO(b_n) &:\Longleftrightarrow \exists M \in (0, \infty) \, \exists n_0 \geq 1 \, \forall n \geq n_0 \colon \lvert a_n \rvert \leq M \lvert b_n \rvert,\\
a_n = \smallO(b_n) &:\Longleftrightarrow \forall \eps \in (0, \infty) \, \exists n_0 \geq 1 \, \forall n \geq n_0 \colon \lvert a_n \rvert \leq \eps \lvert b_n \rvert,\\
a_n = \BigTh(b_n) &:\Longleftrightarrow \exists m, M \in (0, \infty) \, \exists n_0 \geq 1 \, \forall n \geq n_0 \colon m \lvert b_n \rvert \leq \lvert a_n \rvert \leq M \lvert b_n \rvert.
\end{align*}
Mostly we will use the symbols \(\BigO\), \(\smallO\), and \(\BigTh\) loosely as stand-ins for sequences with the respective property, e.g.,\ \(1 + \BigO(\frac{1}{n})\) is to be understood as \(1 + a_n\) with some \(a_n = \BigO(\frac{1}{n})\). In particular \(\BigO(1)\) stands for a bounded sequence, \(\smallO(1)\) for a null-sequence, and \(\BigTh(1)\) for a bounded sequence which is also bounded away from zero.

\subsection{\(p\)-ellipsoids and the uniform distribution on them}

We assume that all random variables are defined on a common probability space \((\Omega, \mathcal{A}, \Pro)\); the expected value, variance, and covariance with respect to \(\Pro\) are denoted by \(\Exp\), \(\Var\), and \(\Cov\), respectively.

Recall the definition of the \(p\)-Gaussian distribution \(\gaussp{p}\) and its moments at the beginning of Section~\ref{sec:main_results}. The uniform distribution on a set \(A \subset \R^n\) with positive volume is denoted by \(\Uni(A)\), and the (multivariate) normal distribution with mean (vector) \(\mu\) and variance (covariance matrix) \(v\) by \(\Norm(\mu, v)\); note \(\Norm(0, 1) = \gaussp{2}\). Equality and convergence in distribution are indicated by \(\stackrel{\mathrm{d}}{=}\) and \(\xrightarrow{\mathrm{d}}\), respectively.

Let us continue with two important properties of \(p\)-ellipsoids. The first concerns the relation between balls and ellipsoids. Set \(\Sigma := \diag(\sigma) \in \R^{n \times n}\). Obviously by definition \(x \in \Ell_{p, \sigma}^n\) if and only if \(\inv{\Sigma} x \in \B_p^n\), and therefore \(\Ell_{p, \sigma}^n = \Sigma \B_p^n\). From this follows \(\vol_n(\Ell_{p, \sigma}^n) = \det(\Sigma) \vol_n(\B_p^n)\). Futhermore, we have
\begin{equation}\label{eq:nEll}
\nEll_{p, \sigma}^n = \frac{\Ell_{p, \sigma}^n}{\vol_n(\Ell_{p, \sigma}^n)^{1/n}} = \frac{\Sigma \B_p^n}{\det(\Sigma)^{1/n} \, \vol_n(\B_p^n)^{1/n}} = \frac{\Sigma \D_p^n}{\det(\Sigma)^{1/n}}.
\end{equation}
Equation~\eqref{eq:nEll} also implies that \(\nEll_{p, \sigma}^n\) is invariant under scaling of \(\sigma\), that is, for any \(c \in (0, \infty)\) the identity \(\nEll_{p, c \sigma}^n = \nEll_{p, \sigma}^n\) holds true, where \(c \sigma := (c \sigma_i)_{i=1}^n\).

The second statement relates the uniform distributions on balls and ellipsoids: let \(X\) be an \(\R^n\)-valued random variable, then \(X \sim \Uni(\B_p^n)\) iff \(\Sigma X \sim \Uni(\Ell_{p, \sigma}^n)\).
Indeed, let \(X \sim \Uni(\B_p^n)\) and let \(A \subset \R^n\) be a Borel-set, then
\begin{align*}
\Pro[\Sigma X \in A] &= \Pro[X \in \inv{\Sigma} A] = \frac{\vol_n(\inv{\Sigma} A \cap \B_p^n)}{\vol_n(\B_p^n)} = \frac{\vol_n(\inv{\Sigma} A \cap \inv{\Sigma} \Ell_{p, \sigma}^n)}{\vol_n(\inv{\Sigma} \Ell_{p, \sigma}^n)}\\
&= \frac{\vol_n(\inv{\Sigma} (A \cap \Ell_{p, \sigma}^n))}{\vol_n(\inv{\Sigma} \Ell_{p, \sigma}^n)} = \frac{\vol_n(A \cap \Ell_{p, \sigma}^n)}{\vol_n(\Ell_{p, \sigma}^n)},
\end{align*}
hence \(\Sigma X \sim \Uni(\Ell_{p, \sigma}^n)\) as claimed. The reverse direction follows similarly.

\section{Examples}\label{sec:examples}

We now provide two non-trivial examples of arrays of semi-axes, either of which generalizes the case of balls and yet can be shown directly to satisfy the conditions of Theorems~\ref{thm:threshold_theorem} and~\ref{thm:clt_ellipsoid}. Either example is simple in that the rows of the array arise as successive initial segments of a single sequence of positive numbers. We call \((\sigma_n)_{n \in \N} \subset (0, \infty)\) eventually periodic if and only if there exist \(n_0 \in \N_0\) and \(d \in \N\) such that \(\sigma_{n + d} = \sigma_{n}\) for all \(n \geq n_0 + 1\).

\begin{propos}\label{prop:spec_sigma}
Let \(p \in (0, \infty]\), \(q \in (0, \infty)\), let \((\sigma_n)_{n \in \N} \subset (0, \infty)\) and define \(\bsigma := ((\sigma_i)_{i=1}^n)_{n \in \N}\).
\begin{compactenum}[(a)]
\item Let \((\sigma_n)_{n \in \N}\) be eventually periodic. Then \(\bsigma\) satisfies the premises of Theorems~\ref{thm:threshold_theorem} and~\ref{thm:clt_ellipsoid} with
\begin{equation*}
F_{q, \bsigma} = \biggl( \prod_{i = 1}^{d} \sigma_{n_0 + i} \biggr)^{1/d} \biggl( \frac{1}{d} \sum_{i = 1}^{d} \sigma_{n_0 + i}^{-q} \biggr)^{1/q},
\end{equation*}
and Corollary~\ref{cor:threshold_theorem_crit} is valid with
\begin{equation*}
G_{q, \bsigma} = \frac{\sum_{i = 1}^{d} \sigma_{n_0 + i}^{-q}}{\sqrt{d} \bigl( \sum_{i = 1}^{d} \sigma_{n_0 + i}^{-2 q} \bigr)^{1/2}} \quad \text{and} \quad z = 0.
\end{equation*}
\item If \(\sigma_n = n^\alpha \log(n + 1)^\beta\) for some \(\alpha, \beta \in \R\) and for all \(n \in \N\), then \(\bsigma\) satisfies the premises of Theorem~\ref{thm:threshold_theorem} iff \(\alpha < \frac{1}{q}\) or \(\alpha = \frac{1}{q} \wedge \beta < 0\), and then
\begin{equation*}
F_{q, \bsigma} = \frac{1}{\ee^\alpha (1 - \alpha q)^{1/q}},
\end{equation*}
with the interpretation \(F_{q, \bsigma} = \infty\) for \(\alpha = \frac{1}{q}\).\\
Next, \(\bsigma\) satisfies the premises of Theorem~\ref{thm:clt_ellipsoid} and Corollary~\ref{cor:threshold_theorem_crit} iff \(\alpha < \frac{1}{2 q}\) or \(\alpha = \frac{1}{2 q} \wedge \beta \leq \frac{1}{2 q}\), and then there hold
\begin{equation*}
G_{q, \bsigma} = \frac{\sqrt{1 - 2 \alpha q}}{1 - \alpha q} \quad \text{and} \quad z = \begin{cases} -\infty & \text{if } \alpha \beta < 0, \\ 0 & \text{if } \beta = 0, \\ \infty & \text{if } \alpha \beta > 0 \text{ or } \alpha = 0 \wedge \beta \neq 0.\end{cases}
\end{equation*}
\end{compactenum}
\end{propos}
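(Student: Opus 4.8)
The entire task is to verify, for each of the two families of sequences, that the numerical conditions appearing in Theorems~\ref{thm:threshold_theorem} and~\ref{thm:clt_ellipsoid} and in Corollary~\ref{cor:threshold_theorem_crit} hold, and to compute the constants \(F_{q, \bsigma}\), \(G_{q, \bsigma}\) and \(z\). The core analytic input in both cases is the asymptotics of the three elementary quantities
\[
P_n := \Bigl( \prod_{i=1}^n \sigma_i \Bigr)^{1/n}, \qquad S_n^{(r)} := \frac{1}{n} \sum_{i=1}^n \sigma_i^{-r} \quad (r = q, 2q), \qquad \max_{1 \le i \le n} \sigma_i^{-2q},
\]
from which all the conditions are assembled. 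So the plan is: first reduce every hypothesis and constant in the three statements to a statement about \(P_n\) and \(S_n^{(r)}\) (e.g.\ \(F_{q,\bsigma} = \lim P_n (S_n^{(q)})^{1/q}\); condition~\eqref{eq:sigma} is \(\lim n^{-1} P_n^{2q} S_n^{(2q)} = 0\); Noether's condition~\eqref{eq:feller_sigma} is \(\lim (\max_i \sigma_i^{-2q})/(n S_n^{(2q)}) = 0\); and \(G_{q,\bsigma} = F_{q,\bsigma}^q / F_{2q,\bsigma}^q\) by Remark~\ref{rmk:gqsigma}), and then evaluate these limits in each of the two examples.

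\emph{Part (a), eventually periodic sequences.} Here the point is that for \(n\) large the partial sum \(\sum_{i=1}^n \sigma_i^{-r}\) differs from \(\lfloor (n-n_0)/d\rfloor\) times the block sum \(\sum_{i=1}^d \sigma_{n_0+i}^{-r}\) by a bounded amount, and similarly \(\sum_{i=1}^n \log \sigma_i\) differs from a multiple of the block log-sum by a bounded amount; hence \(S_n^{(r)} \to \frac1d \sum_{i=1}^d \sigma_{n_0+i}^{-r}\) and \(\log P_n \to \frac1d \sum_{i=1}^d \log \sigma_{n_0+i}\), which gives the stated value of \(F_{q,\bsigma}\) directly. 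Condition~\eqref{eq:sigma} holds because \(P_n^{2q} S_n^{(2q)}\) converges to a finite limit while the factor \(1/n \to 0\); Noether's condition holds because the numerator is bounded (the sequence takes finitely many values eventually) while the denominator \(n S_n^{(2q)} \sim c n \to \infty\); the formula for \(G_{q,\bsigma}\) is just the ratio of the two block averages; and \(z = 0\) because \(h_n \to 1\) (indeed \(h_n = P_n (S_n^{(q)})^{1/q}/F_{q,\bsigma}\), and the numerator converges to \(F_{q,\bsigma}\)) fast enough — more precisely \(h_n - 1 = O(1/n)\) while the prefactor \(\sum \sigma_i^{-q}/(\sum \sigma_i^{-2q})^{1/2} = O(\sqrt n)\), so their product is \(O(1/\sqrt n) \to 0\). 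The one spot that needs a genuine (if short) argument is the last one: one must check the rate \(h_n - 1 = O(1/n)\) rather than merely \(h_n \to 1\), which follows from the bounded-remainder estimates for \(\sum \sigma_i^{-q}\) and \(\sum \log \sigma_i\) together with a first-order Taylor expansion of \(x \mapsto x^{1/q}\) and \(x \mapsto e^x\).

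\emph{Part (b), \(\sigma_n = n^\alpha \log(n+1)^\beta\).} The workhorse here is the comparison of the sums \(\sum_{i=1}^n i^{-\alpha q} \log(i+1)^{-\beta q}\) (and the analogous sum with exponent \(2q\)) and the log-sum \(\sum_{i=1}^n (\alpha \log i + \beta \log\log(i+1))\) with the corresponding integrals. For \(\alpha q < 1\) one gets \(\sum_{i=1}^n i^{-\alpha q}\log(i+1)^{-\beta q} \sim \frac{n^{1-\alpha q}}{1-\alpha q}\log(n)^{-\beta q}\) by standard regular-variation / Euler–Maclaurin estimates, so \(S_n^{(q)} \sim \frac{n^{-\alpha q}\log(n)^{-\beta q}}{1-\alpha q}\); for \(\alpha q = 1\) the sum behaves like \(\int^n x^{-1}\log(x)^{-\beta q}\,dx\), which converges iff \(\beta q > 1\), is \(\sim \log\log n\) if \(\beta q = 1\), and \(\sim \frac{\log(n)^{1-\beta q}}{1-\beta q}\) if \(\beta q < 1\); and for \(\alpha q > 1\) the sum converges, so \(S_n^{(q)} \sim c/n\). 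Meanwhile \(\log P_n = \frac1n \sum_{i=1}^n (\alpha\log i + \beta\log\log(i+1)) = \alpha(\log n - 1) + \beta \log\log n + o(1)\) by Stirling, so \(P_n \sim e^{-\alpha} n^\alpha (\log n)^\beta\). Multiplying, \(P_n (S_n^{(q)})^{1/q} \to e^{-\alpha}(1-\alpha q)^{-1/q}\) when \(\alpha q < 1\), which is the claimed \(F_{q,\bsigma}\); when \(\alpha q = 1\) one checks \(P_n(S_n^{(q)})^{1/q} \to \infty\) (using the \(\beta<0\) case to keep the quantity defined — if \(\beta \ge 0\) the relevant sum need not even grow, which is exactly why the hypotheses fail there). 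The condition~\eqref{eq:sigma} translates, via the same estimates, to \(\frac1n P_n^{2q} S_n^{(2q)} \to 0\): when \(\alpha < 1/(2q)\) the quantity \(P_n^{2q}S_n^{(2q)}\) is bounded and the \(1/n\) kills it; when \(\alpha = 1/(2q)\) one needs \(\beta \le 1/(2q)\) so that \(P_n^{2q}S_n^{(2q)}\) grows slower than \(n\) (the borderline \(\beta q\) vs \(1\) book-keeping in the \(\alpha q = 1\) integral reappears here with \(q\) replaced by \(2q\)); when \(\alpha > 1/(2q)\), \(P_n^{2q}S_n^{(2q)} \sim c\, n^{2\alpha q - 1}\) and the product with \(1/n\) is \(c\,n^{2\alpha q - 2}\), which tends to \(0\) iff \(\alpha q < 1\) — so \eqref{eq:sigma} is strictly weaker than \(F_{q,\bsigma}<\infty\) only when \(\alpha = 1/q,\ \beta < 0\). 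Noether's condition~\eqref{eq:feller_sigma}: the numerator \(\max_i \sigma_i^{-2q}\) is \(\sigma_1^{-2q}\) (a constant) when \(\alpha \ge 0\) and is \(\sim n^{-2\alpha q}(\log n)^{-2\beta q}\) when \(\alpha < 0\); in either regime it is \(o(n S_n^{(2q)})\) precisely under the stated \(\alpha < 1/(2q)\) or \(\alpha = 1/(2q),\ \beta \le 1/(2q)\), which is the same arithmetic as~\eqref{eq:sigma}. Finally \(G_{q,\bsigma} = F_{q,\bsigma}^q / F_{2q,\bsigma}^q = \big(e^{-\alpha}(1-\alpha q)^{-1/q}\big)^q / \big(e^{-\alpha}(1-2\alpha q)^{-1/(2q)}\big)^q = \sqrt{1-2\alpha q}/(1-\alpha q)\), and for \(z\) one inserts \(h_n = P_n(S_n^{(q)})^{1/q}/F_{q,\bsigma}\) and the prefactor \(\sum\sigma_i^{-q}/(\sum\sigma_i^{-2q})^{1/2}\sim G_{q,\bsigma}\sqrt n\), so that \(z = G_{q,\bsigma}\lim \sqrt n\,(h_n - 1)\); the sign and finiteness of this limit are governed by the next-order term in the expansion of \(P_n(S_n^{(q)})^{1/q}\), which carries a factor \(\log\log n / \log n\)-type correction with coefficient a positive multiple of \(\alpha\beta\) — giving \(-\infty\), \(0\), \(+\infty\) according to the sign of \(\alpha\beta\) (the case \(\alpha = 0,\ \beta\ne 0\) being where the \(\beta\)-correction dominates alone and pushes to \(+\infty\)).

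\emph{Main obstacle.} Part (a) is essentially bookkeeping with bounded remainders. The real work is Part (b), and within it the delicate point is the \emph{second-order} asymptotics needed for \(z\): computing \(\lim \sqrt n\,(h_n-1)\) requires tracking the \(o(1)\) terms in both \(\log P_n\) (the error beyond \(\alpha(\log n - 1) + \beta\log\log n\), which by a sharper Stirling/Euler–Maclaurin estimate is of order \((\log n)^{-1}\) or smaller) and in \((S_n^{(q)})^{1/q}\) (where the sum-to-integral comparison leaves a relative error of order \(1/\log n\) coming from the slowly varying factor), and then showing that \(\sqrt n\) times a quantity of order \(1/\log n\) still diverges while its sign is pinned down by \(\mathrm{sgn}(\alpha\beta)\). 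Getting these sub-leading corrections right — especially making sure the borderline cases \(\alpha = 0\) and \(\beta = 0\) separate out correctly — is where the proof has to be careful; everything else reduces to the regular-variation estimates for partial sums of \(i^{-r}(\log i)^{-s}\) and an application of Remark~\ref{rmk:gqsigma} and Remark~\ref{rmk:fqsigma_neq_zero}.
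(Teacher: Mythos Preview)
Your overall strategy matches the paper's: reduce everything to the asymptotics of \(P_n\), \(S_n^{(q)}\), \(S_n^{(2q)}\), obtain these via Stirling and Euler--Maclaurin, and push to second order for \(z\). Part~(a) is fine.

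In Part~(b) there are two genuine problems. First, your analysis of condition~\eqref{eq:sigma} at \(\alpha = 1/(2q)\) is wrong: you claim one needs \(\beta \le 1/(2q)\) ``so that \(P_n^{2q}S_n^{(2q)}\) grows slower than \(n\)'', but in fact for \(\alpha = 1/(2q)\) one has \(P_n^{2q}S_n^{(2q)}\) of order \((\log n)^{\max(2\beta q,\,1)}\) (up to iterated-log factors), which is \(o(n)\) for \emph{every} \(\beta\). So~\eqref{eq:sigma} holds throughout \(\alpha < 1/q\) and at \(\alpha = 1/q,\ \beta < 0\), which is exactly the Theorem~\ref{thm:threshold_theorem} iff. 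The restriction \(\beta \le 1/(2q)\) at \(\alpha = 1/(2q)\) enters only through Noether's condition~\eqref{eq:feller_sigma} (there the denominator \(\sum\sigma_i^{-2q} = \sum i^{-1}\log(i+1)^{-2\beta q}\) must diverge), not through~\eqref{eq:sigma}; your ``same arithmetic'' remark conflates the two. Carried through, your version would wrongly exclude \(\alpha = 1/(2q),\ \beta > 1/(2q)\) from the Theorem~\ref{thm:threshold_theorem} range.

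Second, your formula \(z = G_{q,\bsigma}\lim\sqrt n\,(h_n-1)\) is indeterminate precisely at \(\alpha = 1/(2q)\), since then \(G_{q,\bsigma} = 0\) while \(\sqrt n\,(h_n-1)\to\pm\infty\). In this borderline the prefactor \(\sum\sigma_i^{-q}/(\sum\sigma_i^{-2q})^{1/2}\) grows like \(\sqrt n/\sqrt{\log n}\) (or \(\sqrt n/\sqrt{\log\log n}\) when \(\beta = 1/(2q)\)), not like \(G_{q,\bsigma}\sqrt n\), and must be tracked directly against \(h_n-1\); the paper treats \(\alpha = 1/(2q)\) as a separate case for exactly this reason. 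Two minor slips: the leading correction to \(h_n\) for generic \(\alpha,\beta\neq 0\) is \(q\alpha\beta/\bigl((1-q\alpha)\log n\bigr)\), not ``\(\log\log n/\log n\)-type''; and \(\max_i\sigma_i^{-2q}\) is not constant for \(\alpha = 0,\ \beta < 0\), though Noether still follows since the max grows only polylogarithmically.
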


\noindent
For the proof of Proposition~\ref{prop:spec_sigma} we refer the reader to Section~\ref{sec:proof_spec_sigma}.

\begin{rmk}\label{rmk:semiaxes}
Ellipsoids with semi-axes like those in Proposition~\ref{prop:spec_sigma},~(b), have been studied in geometric functional analysis and information-based complexity, see, e.g., Hinrichs et~al.~\cite[Corollary~6]{HKNPV2021}. Moreover, they arise asymptotically as singular values of embeddings of Sobolev spaces; see, e.g., Novak and Wo\'zniakowski~\cite{NW2008}, Section~4.2.4 and Remark~4.43 and the references therein.
\end{rmk}


\section{Proofs of the main theorems}
\label{sec:proofs}

\subsection{Proof of Theorem~\ref{thm:threshold_theorem}}
\label{subsec:proof_threshold_theorem}

The proof of Theorem~\ref{thm:threshold_theorem} essentially follows along the lines set out in \cite{SS1991}, while making the necessary adaptations to account for the possibly unequal semi-axes.

The handling of \(\Uni(\B_p^n)\) for \(p < \infty\) is made amenable to probabilistic methods by the following representation, which has its roots in Schechtman and Zinn~\cite{SchechtmanZinn} and independently also in Rachev and Rüschendorf~\cite{RR1991} and which was further elaborated upon in Barthe et~al.~\cite{BartheGuedonEtAl}.

\begin{propos}\label{prop:schechtmanzinn}
A random vector \(Z\) has law \(\Uni(\B_p^n)\) if and only if there exist independent random variables \(U\) and \(X_1, \dotsc, X_n\) with
\begin{equation*}
U \sim \Uni([0, 1]) \quad \text{and} \quad X_i \sim \gaussp{p} \,\text{ for }\, 1\leq i \leq n
\end{equation*}
and such that
\begin{equation*}
Z \stackrel{\mathrm{d}}{=} U^{1/n} \, \frac{(X_1, \dotsc, X_n)}{\lVert (X_1, \dotsc, X_n) \rVert_p}.
\end{equation*}
\end{propos}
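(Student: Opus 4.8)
The plan is to prove the non-obvious implication first — that a vector $Z_0$ of the displayed form has law $\Uni(\B_p^n)$ — and to deduce the converse from it by a soft argument. For the converse, realize independent $U \sim \Uni([0,1])$ and $X_1, \dotsc, X_n \sim \gaussp{p}$ on any probability space (e.g.\ the coordinate maps on $[0,1] \times \R^n$ with the product of Lebesgue measure and $\gaussp{p}^{\otimes n}$), set $Z_0 := U^{1/n} (X_1, \dotsc, X_n)/\lVert (X_1, \dotsc, X_n) \rVert_p$, and note that once $Z_0 \sim \Uni(\B_p^n)$ is established, any $Z$ with that law satisfies $Z \stackrel{\mathrm{d}}{=} Z_0$, which is exactly the asserted representation. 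Hence the entire content is the claim $Z_0 \sim \Uni(\B_p^n)$.

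To prove that claim, write $X := (X_1, \dotsc, X_n)$; by independence and the definition of $\gaussp{p}$, the vector $X$ has Lebesgue density $f(x) = c_{n,p}\,\ee^{-\lVert x \rVert_p^p / p}$ with $c_{n,p} := \bigl( 2 p^{1/p} \Gamma(\tfrac{1}{p} + 1) \bigr)^{-n}$, and in particular $X \neq 0$ almost surely, so $Z_0$ is a.s.\ well defined. The engine of the proof is the polar decomposition of Lebesgue measure adapted to $\lVert \cdot \rVert_p$: since $\lVert \cdot \rVert_p$ is continuous and positively $1$-homogeneous (only these two properties are used, so the argument is insensitive to $\lVert \cdot \rVert_p$ being merely a quasi-norm when $p < 1$) and $\B_p^n$ is compact and star-shaped about the origin, the map $x \mapsto (\lVert x \rVert_p,\, x/\lVert x \rVert_p)$ is a homeomorphism of $\R^n \setminus \{0\}$ onto $(0, \infty) \times \partial\B_p^n$, where $\partial\B_p^n := \{ \theta \in \R^n \Colon \lVert \theta \rVert_p = 1 \}$, and there is a unique finite Borel measure $\nu$ on $\partial\B_p^n$ such that
\[
\int_{\R^n} h(x) \, \dif x \;=\; \int_0^\infty s^{n-1} \int_{\partial\B_p^n} h(s\theta) \, \dif\nu(\theta) \, \dif s
\]
for every nonnegative Borel function $h$; feeding in $h = \mathbbm{1}_{\B_p^n}$ gives $\nu(\partial\B_p^n) = n \, \vol_n(\B_p^n)$. (This decomposition, or its equivalent phrasing through the cone probability measure on $\partial\B_p^n$, may also simply be quoted, e.g.\ from Barthe et~al.~\cite{BartheGuedonEtAl}.)

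With this in hand I would fix a bounded Borel function $g$ on $\R^n$, and it suffices to treat $g \geq 0$ (split into positive and negative parts otherwise). Using the independence of $U$ and $X$, then the substitution $u = r^n$, then the polar decomposition applied to $h = g(r\, \cdot / \lVert \cdot \rVert_p)\, f$ — in which the radial integral $\kappa := \int_0^\infty s^{n-1} \ee^{-s^p/p} \, \dif s \in (0,\infty)$ factors out because $g(r\, x/\lVert x \rVert_p)$ depends on $x$ only through $x/\lVert x \rVert_p$ — and finally the polar decomposition applied to $h = g\, \mathbbm{1}_{\B_p^n}$ (using $s\theta \in \B_p^n \iff s \leq 1$), one obtains
\[
\Exp[g(Z_0)] \;=\; n \int_0^1 r^{n-1} \, \Exp\!\Bigl[ g\bigl( r\tfrac{X}{\lVert X \rVert_p} \bigr) \Bigr] \dif r \;=\; n\, c_{n,p}\, \kappa \int_0^1 r^{n-1} \!\int_{\partial\B_p^n}\! g(r\theta) \, \dif\nu(\theta) \, \dif r \;=\; n\, c_{n,p}\, \kappa \int_{\B_p^n} g(x) \, \dif x .
\]
Taking $g \equiv 1$ here forces $n\, c_{n,p}\, \kappa = \vol_n(\B_p^n)^{-1}$, since the left-hand side is then $\Exp[1] = 1$; hence $\Exp[g(Z_0)] = \vol_n(\B_p^n)^{-1} \int_{\B_p^n} g \, \dif x$ for every bounded Borel $g$, which is precisely $Z_0 \sim \Uni(\B_p^n)$.

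I expect the only genuinely delicate step to be the polar-coordinate disintegration together with the attendant measurability bookkeeping (and the mild care needed to check that it remains valid for $p \in (0,1)$, where $\lVert\cdot\rVert_p$ is only a quasi-norm); everything else is a formal rearrangement of integrals, and the write-up is short once the disintegration formula is available.
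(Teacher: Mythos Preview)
Your argument is correct. The paper does not give its own proof of this proposition; it merely states the result and attributes it to Schechtman--Zinn, Rachev--R\"uschendorf, and Barthe et~al. Your proof via the polar decomposition of Lebesgue measure with respect to \(\lVert\cdot\rVert_p\) is precisely the standard route taken in those references (in particular in \cite{BartheGuedonEtAl}), so there is nothing to compare.
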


The main ingredient of the proof of Theorem~\ref{thm:threshold_theorem} is the following weak law of large numbers for triangular arrays.

\begin{propos}\label{prop:wlln_array}
Let \(((Y_{n, i})_{i=1}^n)_{n \in \N}\) be a triangular array of real-valued square-integrable random variables such that, for any \(n \in \N\), the random variables \(Y_{n, 1}, \dotsc, Y_{n, n}\) are uncorrelated, and let \((b_n)_{n \in \N} \subset (0, \infty)\) be a sequence. If the following condition holds true,
\begin{equation}
\lim_{n \to \infty} \frac{1}{b_n^2} \sum_{i = 1}^n \Var[Y_{n, i}] = 0,\label{eq:wlln}
\end{equation}
then
\begin{equation*}
\biggl( \frac{1}{b_n} \sum_{i = 1}^n (Y_{n, i} - \Exp[Y_{n, i}]) \biggr)_{n \in \N} \to 0 \text{ in probabilty.}
\end{equation*}
\end{propos}

\begin{proof}
A simple application of Chebyshev's
inequality yields, for any \(\eps \in (0, \infty)\),
\begin{align*}
\Pro\biggl[ \biggl\lvert \frac{1}{b_n} \sum_{i = 1}^n (Y_{n, i} - \Exp[Y_{n, i}]) \biggr\rvert \geq \eps \biggr] &\leq \frac{1}{b_n^2 \, \eps^2} \Var\biggl[ \sum_{i = 1}^n (Y_{n, i} - \Exp[Y_{n, i}]) \biggr]\\
&= \frac{1}{b_n^2 \, \eps^2} \sum_{i = 1}^n \Var[Y_{n, i}],
\end{align*}
and this immediately implies the conclusion.
\end{proof}


\begin{proof}[Proof of Theorem~\ref{thm:threshold_theorem}]
\textit{Case \(p < \infty\).} Let \(n \in \N\) and \(t \in [0, \infty)\). Let \(\vek{X}_n = (X_1, \dotsc, X_n) \sim \gaussp{p}^{\otimes n}\) and \(U \sim \Uni([0, 1])\) be independent, then \(r_{n, p} U^{1/n} \frac{\vek{X}_n}{\lVert \vek{X}_n \rVert_p} \sim \Uni(\D_p^n)\) and therefore, using \eqref{eq:nEll} and writing \(\Sigma_n := \diag(\sigma_n)\),
\begin{align}
\vol_n(\D_p^n \cap t \nEll_{q, \sigma_n}^n) &= \Pro\Bigl[ r_{n, p} U^{\frac{1}{n}} \frac{\vek{X}_n}{\lVert \vek{X}_n \rVert_p} \in t \det(\Sigma_n)^{-1/n} \Sigma_n \D_q^n \Bigr]\notag\\
&= \Pro\biggl[ \frac{r_{n, p} U^{\frac{1}{n}} \det(\Sigma_n)^{1/n} \inv{\Sigma_n} \vek{X}_n}{\lVert \vek{X}_n \rVert_p} \in t \D_q^n \biggr]\notag\\
&= \Pro\biggl[ \frac{r_{n, p} U^{\frac{1}{n}} \bigl( \sum_{i = 1}^n \bigl\lvert \frac{X_i}{\tau_{n, i}} \bigr\rvert^q \bigr)^{1/q}}{\bigl( \sum_{i = 1}^n \lvert X_i \rvert^p \bigr)^{1/p}} \leq t r_{n, q}\biggr]\notag\\
&= \Pro\biggl[ \frac{n^{-1/p} r_{n, p} U^{\frac{1}{n}} \bigl( \frac{1}{n} \sum_{i = 1}^n \bigl( \bigl\lvert \frac{X_i}{\tau_{n, i}} \bigr\rvert^q - \frac{M_p(q)}{\tau_{n, i}^q} \bigr) + \frac{\Exp[\lvert X_1 \rvert^q]}{n} \sum_{i = 1}^n \tau_{n, i}^{-q}\bigr)^{1/q}}{n^{-1/q} r_{n, q} \bigl( \frac{1}{n} \sum_{i = 1}^n \lvert X_i \rvert^p \bigr)^{1/p}} \leq t \biggr],\label{eq:pklinerinf}
\end{align}
where in the third line we have introduced \(\tau_{n, i} := \bigl( \prod_{j = 1}^n \sigma_{n, j} \bigr)^{-1/n} \, \sigma_{n, i}\) for better legibility. Now by the strong law of large numbers,
\begin{equation*}
\lim_{n \to \infty} \frac{1}{n} \sum_{i = 1}^n \lvert X_i \rvert^p = M_p(p) = 1 \text{ almost surely,}
\end{equation*}
and also \(\lim_{n \to \infty} U^{1/n} = 1\) almost surely. Furthermore, by Proposition~\ref{prop:wlln_array},
\begin{equation*}
\lim_{n \to \infty} \frac{1}{n} \sum_{i = 1}^n \Bigl( \Bigl\lvert \frac{X_i}{\tau_{n, i}} \Bigr\rvert^q - \frac{M_p(q)}{\tau_{n, i}^q} \Bigr) = 0 \text{ in probability;}
\end{equation*}
note that the condition \eqref{eq:sigma} on \(\bsigma\) corresponds precisely to the condition \eqref{eq:wlln} when applied to \(Y_{n, i} := \lvert \frac{X_i}{\tau_{n, i}} \rvert^q\) while plugging in the definition of \(\tau_{n, i}\). Therefore, the random variable on the left-hand side of \eqref{eq:pklinerinf} converges in probability, for \(n \to \infty\), towards \(\frac{F_{q, \bsigma}}{A_{p, q}}\). Since convergence in probability implies convergence in distribution, we get
\begin{equation*}
\lim_{n \to \infty} \vol_n(\D_p^n \cap t \nEll_{q, \sigma_n}^n) = \Pro\Bigl[ \frac{F_{q, \bsigma}}{A_{p, q}} \leq t \Bigr]
\end{equation*}
and the claim follows.

\textit{Case~\(p = \infty\).} Again let \(n \in \N\) and \(t \in [0, \infty)\). Let \(\vek{X}_n = (X_1, \dotsc, X_n) \sim \gaussp{\infty}^{\otimes n} = \Uni(\B_\infty^n)\), hence \(\frac{1}{2} \, \vek{X}_n \sim \Uni(\D_\infty^n)\). Again invoking \eqref{eq:nEll} and writing \(\Sigma_n\) and \(\tau_{n, i}\) as before, we obtain
\begin{align*}
\vol_n(\D_\infty^n \cap t \nEll_{q, \sigma_n}^n) &= \Pro\bigl[ \tfrac{1}{2} \, \vek{X}_n \in t \det(\Sigma_n)^{-\frac{1}{n}} \, \Sigma_n \D_q^n \bigr]\\
&= \Pro\biggl[ \frac{\bigl( \sum_{i = 1}^n \bigl\lvert \frac{X_i}{\tau_{n, i}} \bigr\rvert^q \bigr)^{1/q}}{2 r_{n, q}} \leq t \biggr]\\
&= \Pro\biggl[ \frac{\bigl( \frac{1}{n} \sum_{n = 1}^n \bigl( \bigl\lvert \frac{X_i}{\tau_{n, i}} \bigr\rvert^q - \frac{M_\infty(q)}{\tau_{n, i}^q} \bigr) + \frac{M_\infty(q)}{n} \sum_{i = 1}^n \tau_{n, i}^{-q} \bigr)^{1/q}}{2 n^{-1/q} \, r_{n, q}} \leq t \biggr].
\end{align*}
Employing essentially the same arguments as in the case \(p < \infty\), the left-hand side is seen to be converging in probability, for \(n \to \infty\), towards \(\frac{F_{q, \bsigma}}{A_{\infty, q}}\), and the conclusion follows as before.
\end{proof}

\subsection{Proofs of Theorem~\ref{thm:clt_ellipsoid} and Corollary~\ref{cor:threshold_theorem_crit}}
\label{subsec:proof_clt_ellipsoid}


The proof of Theorem~\ref{thm:clt_ellipsoid} is more involving and requires preparations which are dealt with in the subsequent lemmas. The general strategy underlying the proof is the same as developed by Kalbuchko, the second named author, and Th\"ale in~\cite[Theorem~1.1 (a)]{KPT2019_I}.

We are going to use the following multivariate version of the Lindeberg--Feller-CLT in the proof of Theorem~\ref{thm:clt_ellipsoid}. As usual, expectations of vectors and matrices are to be understood component-wise; \(\vek{0} \in \R^d\) is the zero-vector and \(I_d \in \R^{d \times d}\) the identity-matrix. For vectors \(x = (x_i)_{i = 1}^n, y = (y_i)_{i = 1}^n \in \R^n\) we define their tensor product by \(x \otimes y := (x_i y_j)_{i, j = 1}^n \in \R^{n \times n}\).

\begin{propos}\label{prop:clt_multivar}
Let \(((X_{n, i})_{i=1}^n)_{n \in \N}\) be an independent array of centered, square-integrable \(\R^d\)-valued random variables. Define \(S_n := \sum_{i = 1}^n \Exp[X_{n, i} \otimes X_{n, i}] \in \R^{d \times d}\) and \(s_n := \lambda_{\min}(S_n)^{1/2}\) (the square root of the smallest eigenvalue of \(S_n\)); assume \(s_n > 0\) for all \(n\) sufficiently large. If the following multivariate Lindeberg-condition is satisfied:
\begin{equation*}
\lim_{n \to \infty} \frac{1}{s_n^2} \sum_{i = 1}^n \Exp\bigl[ \lVert X_{n, i} \rVert_2^2 \, 1_{[\lVert X_{n, i} \rVert_2 \geq \eps s_n]} \bigr] = 0 \quad \text{for all} \quad \eps \in (0, \infty),
\end{equation*}
then
\begin{equation*}
\biggl( S_n^{-1/2} \sum_{i = 1}^n X_{n, i} \biggr)_{n \in \N} \xrightarrow{\mathrm{d}} \Norm(\vek{0}, I_d).
\end{equation*}
\end{propos}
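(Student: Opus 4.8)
The plan is to deduce the multivariate statement from the classical univariate Lindeberg--Feller CLT by means of the Cramér--Wold device. For all $n$ large enough that $s_n > 0$ the matrix $S_n$ is symmetric and positive definite, hence $S_n^{-1/2}$ is well defined and symmetric, and it suffices to show that for each fixed $\theta \in \R^d$ the scalar sequence $\bigl( \langle \theta, S_n^{-1/2}\sum_{i=1}^n X_{n,i}\rangle \bigr)_{n\in\N}$ converges in distribution to $\Norm(0, \lVert\theta\rVert_2^2)$, since this is precisely the law of $\langle\theta, G\rangle$ for $G \sim \Norm(\vek{0}, I_d)$. The case $\theta = \vek{0}$ is trivial, so I would fix $\theta \neq \vek{0}$, put $\theta_n := S_n^{-1/2}\theta$ (which is nonzero, $S_n^{-1/2}$ being invertible) and $Y_{n,i} := \langle\theta_n, X_{n,i}\rangle$, and note $\langle\theta, S_n^{-1/2}\sum_i X_{n,i}\rangle = \sum_{i=1}^n Y_{n,i}$ by symmetry of $S_n^{-1/2}$. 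For each $n$ the variables $Y_{n,1},\dots,Y_{n,n}$ are independent, centered and square-integrable, and
\[
\sum_{i=1}^n \Var[Y_{n,i}] = \sum_{i=1}^n \Exp\bigl[\langle\theta_n, X_{n,i}\rangle^2\bigr] = \langle \theta_n, S_n\theta_n\rangle = \bigl\langle S_n^{-1/2}\theta, S_n S_n^{-1/2}\theta\bigr\rangle = \lVert\theta\rVert_2^2,
\]
a positive constant independent of $n$; thus $\lVert\theta\rVert_2$ plays the role of the normalising constant in Lindeberg's theorem for the array $(Y_{n,i})$.

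The next step is to verify the univariate Lindeberg condition for this array. By Cauchy--Schwarz, $Y_{n,i}^2 \leq \lVert\theta_n\rVert_2^2\,\lVert X_{n,i}\rVert_2^2$, and since $\lVert\theta_n\rVert_2 = \lVert S_n^{-1/2}\theta\rVert_2 \leq \lambda_{\min}(S_n)^{-1/2}\lVert\theta\rVert_2 = \lVert\theta\rVert_2/s_n$, for every $\eps \in (0,\infty)$ one has the inclusion $\{\lvert Y_{n,i}\rvert \geq \eps\} \subseteq \{\lVert X_{n,i}\rVert_2 \geq \eps/\lVert\theta_n\rVert_2\} \subseteq \{\lVert X_{n,i}\rVert_2 \geq (\eps/\lVert\theta\rVert_2)\, s_n\}$. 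Combining these two observations,
\[
\sum_{i=1}^n \Exp\bigl[Y_{n,i}^2\, 1_{[\lvert Y_{n,i}\rvert \geq \eps]}\bigr] \leq \frac{\lVert\theta\rVert_2^2}{s_n^2}\sum_{i=1}^n \Exp\bigl[\lVert X_{n,i}\rVert_2^2\, 1_{[\lVert X_{n,i}\rVert_2 \geq (\eps/\lVert\theta\rVert_2)\, s_n]}\bigr] \xrightarrow[n\to\infty]{} 0
\]
by the hypothesised multivariate Lindeberg condition applied with $\eps/\lVert\theta\rVert_2$ in place of $\eps$. Hence the classical Lindeberg--Feller CLT gives $\sum_{i=1}^n Y_{n,i} \xrightarrow{\mathrm{d}} \Norm(0, \lVert\theta\rVert_2^2)$; as $\theta$ was arbitrary, the Cramér--Wold device yields $S_n^{-1/2}\sum_{i=1}^n X_{n,i} \xrightarrow{\mathrm{d}} \Norm(\vek{0}, I_d)$, which is the claim.

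I do not expect a genuine obstacle here: the argument is a routine reduction. The only points requiring a little care are the bookkeeping of constants — so that the scalar Lindeberg condition follows from the vector-valued one after rescaling $\eps$ by $\lVert\theta\rVert_2$ — and the spectral identity $\lVert S_n^{-1/2}\rVert_{\mathrm{op}} = \lambda_{\min}(S_n)^{-1/2}$, which is what ties the scalar truncation level back to $s_n$; one should also state at the outset that everything is understood for $n$ large enough that $s_n > 0$, so that $S_n^{-1/2}$ exists.
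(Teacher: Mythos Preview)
Your proposal is correct and follows essentially the same route as the paper: a Cramér--Wold reduction to the scalar Lindeberg--Feller CLT, with the variance of the projected array computed as a constant and the univariate Lindeberg condition derived from the multivariate one via the spectral bound \(\lVert S_n^{-1/2}\rVert_{\mathrm{op}} = s_n^{-1}\). The only cosmetic difference is that the paper normalises the direction to \(\lVert\lambda\rVert_2 = 1\) from the outset, whereas you carry a general \(\theta\) and absorb \(\lVert\theta\rVert_2\) into the rescaled \(\eps\); the content is identical.
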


The proof of Proposition~\ref{prop:clt_multivar} is accomplished via the following theorem, slightly adapted from Klenke~\cite{Klenke2014}, Theorem~15.56. Here \(\langle (x_i)_{i = 1}^n, (y_i)_{i = 1}^n \rangle := \sum_{i = 1}^n x_i y_i\) denotes the standard inner product on \(\R^n\).

\begin{thm}[Cramér--Wold]
A sequence \((X_n)_{n\in\N}\) of \(\R^d\)-valued random variables converges in distribution to the \(\R^d\)-valued random variable \(X\) if and only if, for any \(\lambda \in \R^d\) with \(\lVert \lambda \rVert_2 = 1\), \((\langle \lambda, X_n \rangle)_{n\in\N}\) converges in distribution to \(\langle \lambda, X \rangle\).
\end{thm}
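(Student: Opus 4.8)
The plan is to establish both implications through characteristic functions, using the Lévy continuity theorem in dimension one for one direction and in dimension $d$ for the other.

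The forward implication needs nothing more than the continuous mapping theorem: for each fixed $\lambda \in \R^d$ with $\lVert \lambda \rVert_2 = 1$ the linear functional $x \mapsto \langle \lambda, x \rangle$ is continuous from $\R^d$ to $\R$, so $X_n \xrightarrow{\mathrm{d}} X$ forces $\langle \lambda, X_n \rangle \xrightarrow{\mathrm{d}} \langle \lambda, X \rangle$.

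For the converse I would work with characteristic functions. Fix $\xi \in \R^d$; if $\xi = \vek{0}$ there is nothing to check, and otherwise write $\xi = t \lambda$ with $t := \lVert \xi \rVert_2 \in (0, \infty)$ and the unit vector $\lambda := \xi / \lVert \xi \rVert_2$. For every $n \in \N$,
\[
\Exp\bigl[ \ee^{\ii \langle \xi, X_n \rangle} \bigr] = \Exp\bigl[ \ee^{\ii t \langle \lambda, X_n \rangle} \bigr]
\]
is precisely the characteristic function of the real-valued random variable $\langle \lambda, X_n \rangle$ evaluated at $t$. By hypothesis $\langle \lambda, X_n \rangle \xrightarrow{\mathrm{d}} \langle \lambda, X \rangle$, so the one-dimensional Lévy continuity theorem gives that the right-hand side converges, as $n \to \infty$, to $\Exp[\ee^{\ii t \langle \lambda, X \rangle}] = \Exp[\ee^{\ii \langle \xi, X \rangle}]$. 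Since $\xi$ was arbitrary, the characteristic functions of $X_n$ converge pointwise on all of $\R^d$ to the characteristic function of $X$. The latter is continuous at $\vek{0}$, being a genuine characteristic function, so the multivariate Lévy continuity theorem yields $X_n \xrightarrow{\mathrm{d}} X$.

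The only point that deserves care is the bookkeeping around the two continuity theorems: the one-dimensional version applies without a side condition because the candidate limit $\Exp[\ee^{\ii t \langle \lambda, X \rangle}]$ is already an honest characteristic function, and the multivariate version applies because the pointwise limit of the $\Exp[\ee^{\ii \langle \cdot, X_n \rangle}]$ has been identified as $\Exp[\ee^{\ii \langle \cdot, X \rangle}]$, which is continuous at the origin. Should one prefer to avoid the multivariate continuity theorem, an alternative route is to derive tightness of $(X_n)_{n \in \N}$ from tightness of its coordinate projections (each of which converges in distribution by hypothesis, hence is tight) and then identify every subsequential weak limit via its one-dimensional marginals; the characteristic-function argument is, however, the shorter of the two.
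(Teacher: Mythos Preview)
Your proof is correct and is the standard characteristic-function argument. Note, however, that the paper does not prove this theorem at all: it is quoted (slightly adapted) from Klenke's textbook, and the only thing the paper supplies is a short remark explaining why restricting to unit vectors \(\lVert \lambda \rVert_2 = 1\) rather than all of \(\R^d\) is immaterial. Your scaling step \(\xi = t\lambda\) in the converse direction handles exactly this point, so your write-up subsumes both the cited result and the paper's remark.
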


\begin{rmk}
The restriction to \(\lVert \lambda \rVert_2 = 1\) in comparison to~\cite{Klenke2014} is immaterial: let \(\lambda \in \R^d\) be arbitrary, then in the case \(\lambda = \vek{0}\) we have \(\langle \vek{0}, X_n \rangle = \langle \vek{0}, X \rangle = 0\) and the convergence is immediate; in the case \(\lambda \neq \vek{0}\) note \(\bigl\lVert \frac{\lambda}{\lVert \lambda \rVert_2} \bigr\rVert_2 = 1\), hence \(\frac{\langle \lambda, X_n \rangle}{\lVert \lambda \rVert_2} = \bigl\langle \frac{\lambda}{\lVert \lambda \rVert_2}, X_n \bigr\rangle \xrightarrow[n \to \infty]{\mathrm{d}} \bigl\langle \frac{\lambda}{\lVert \lambda \rVert_2}, X \bigr\rangle = \frac{\langle \lambda, X \rangle}{\lVert \lambda \rVert_2}\), and because the map \(x \mapsto \lVert \lambda \rVert_2 x\) is continuous we infer \((\langle \lambda, X_n \rangle)_{n\in\N} \xrightarrow{\mathrm{d}} \langle \lambda, X \rangle\).
\end{rmk}

\begin{proof}[Proof of Proposition~\ref{prop:clt_multivar}]
Note that if \(Z \sim \Norm(\vek{0}, I_d)\), then \(\langle \lambda, Z \rangle \sim \Norm(0, 1)\) for every \(\lambda \in \R^d\) with \(\lVert \lambda \rVert_2 = 1\). By the Cramér--Wold-theorem it is therefore enough to show
\begin{equation*}
\biggl( \biggl\langle \lambda, S_n^{-1/2} \sum_{i = 1}^n X_{n, i} \biggr\rangle \biggr)_{n \in \N} \xrightarrow{\mathrm{d}} \Norm(0, 1) \quad \text{for all} \quad \lambda \in \R^d \text{ with } \lVert \lambda \rVert_2 = 1.
\end{equation*}
Let \(\lambda \in \R^d\) with \(\lVert \lambda \rVert_2 = 1\). We set \(Y_{n, i} := \langle \lambda, S_n^{-1/2} X_{n, i} \rangle\) and will show that the array \(((Y_{n, i})_{i=1}^n)_{n \in \N}\) satisfies Lindeberg's condition (in one dimension). Obviously the latter array is independent and centered with square-integrable entries, and for any \(n \in \N\) we have
\begin{equation*}
\Var\biggl[ \sum_{i = 1}^n Y_{n, i} \biggr] = \sum_{i = 1}^n \Var[Y_{n, i}] = \biggl\langle \lambda, S_n^{-1/2} \sum_{i = 1}^n \Exp[X_{n, i} \otimes X_{n, i}] S_n^{-1/2} \lambda \biggr\rangle = 1.
\end{equation*}
Hence, it suffices to prove, for any \(\eps > 0\),
\begin{equation*}
\lim_{n \to \infty} \sum_{i = 1}^n \Exp\bigl[ Y_{n, i}^2 \, 1_{[\lvert Y_{n, i} \rvert \geq \eps]} \bigr] = 0;
\end{equation*}
then \(\sum_{i = 1}^n Y_{n, i} = \bigl\langle \lambda, S_n^{-1/2} \sum_{i = 1}^n X_{n, i} \bigr\rangle \limd{n} \Norm(0, 1)\) it true, as desired. We can estimate
\begin{align*}
\lvert Y_{n, i} \rvert &= \lvert \langle \lambda, S_n^{-1/2} X_{n, i} \rangle \rvert \leq \lVert \lambda \rVert_2 \, \lVert S_n^{-1/2} \rVert_2 \, \lVert X_{n, i} \rVert_2 =\\
&= \lambda_{\max}(S_n^{-1/2}) \lVert X_{n, i} \rVert_2 = \lambda_{\min}(S_n)^{-1/2} \, \lVert X_{n, i} \rVert_2 = \frac{\lVert X_{n, i} \rVert_2}{s_n},
\end{align*}
where \(\lVert \cdot \rVert_2\) denotes the spectral norm of a matrix, which equals the greatest eigenvalue in the case of a positive semidefinite matrix, here denoted by \(\lambda_{\max}\). This leads to
\begin{equation*}
\sum_{i = 1}^n \Exp\bigl[ Y_{n, i}^2 \, 1_{[\lvert Y_{n, i} \rvert \geq \eps]} \bigr] \leq \frac{1}{s_n^2} \sum_{i = 1}^n \Exp\bigl[ \lVert X_{n, i} \rVert_2^2 \, 1_{[\lVert X_{n, i} \rVert_2 \geq \eps s_n]} \bigr]
\end{equation*}
for any \(\eps \in (0, \infty)\), and by the premises of the proposition this implies the claim.
\end{proof}


We introduce the following notation in preparation for the proof of Theorem~\ref{thm:clt_ellipsoid}: let \((X_n)_{n\in\N}\) be a sequence of independent, \(\gaussp{p}\)-distributed random variables. With these we set
\begin{equation*}
\xi_n := \frac{\sum_{i = 1}^n \sigma_{n, i}^{-q} \, (\lvert X_i \rvert^q - M_p(q))}{\bigl( V_p(q) \sum_{i = 1}^n \sigma_{n, i}^{-2 q} \bigr)^{1/2}} \quad \text{and} \quad \eta_n := \frac{\sum_{i = 1}^n (\lvert X_i \rvert^p - 1)}{\sqrt{n p}};
\end{equation*}
the \(\eta_n\)'s are going to be needed (and in fact are defined) only for \(p < \infty\).

\begin{lemma}\label{lem:xi_eta_clt}
Let the premises of Theorem~\ref{thm:clt_ellipsoid} hold. Then in the case \(p < \infty\) the following convergence is valid,
\begin{equation*}
\left( \begin{pmatrix} \xi_n \\ \eta_n \end{pmatrix} \right)_{n \in \N} \xrightarrow{\mathrm{d}} \begin{pmatrix} \xi \\ \eta \end{pmatrix} \sim \Norm\left( \begin{pmatrix} 0 \\ 0 \end{pmatrix}, \begin{pmatrix} 1 & \rho \\ \rho & 1 \end{pmatrix} \right),
\end{equation*}
where
\begin{equation*}
\rho = \frac{C_p(p, q) G_{q, \bsigma}}{\sqrt{p \, V_p(q)}}.
\end{equation*}
And in the case \(p = \infty\),
\begin{equation*}
(\xi_n)_{n \in \N} \xrightarrow{\mathrm{d}} \xi \sim \Norm(0, 1).
\end{equation*}
\end{lemma}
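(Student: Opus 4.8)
The plan is to apply the multivariate Lindeberg--Feller CLT from Proposition~\ref{prop:clt_multivar} to the array of $\R^2$-valued summands
\[
X_{n,i} := \left( \frac{\sigma_{n,i}^{-q}(\lvert X_i \rvert^q - M_p(q))}{\bigl(V_p(q) \sum_{j=1}^n \sigma_{n,j}^{-2q}\bigr)^{1/2}}, \ \frac{\lvert X_i \rvert^p - 1}{\sqrt{np}} \right), \qquad 1 \le i \le n,
\]
in the case $p < \infty$ (for $p = \infty$ only the first coordinate is present, so the one-dimensional Lindeberg--Feller CLT applies directly). By construction $\xi_n = \sum_{i=1}^n (X_{n,i})_1$ and $\eta_n = \sum_{i=1}^n (X_{n,i})_2$, so the conclusion of Proposition~\ref{prop:clt_multivar} delivers exactly the asserted joint convergence once we identify $S_n$ and verify the Lindeberg condition.

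First I would compute the covariance matrix $S_n = \sum_{i=1}^n \Exp[X_{n,i} \otimes X_{n,i}]$. The $(1,1)$-entry is $\sum_i \sigma_{n,i}^{-2q} V_p(q) / (V_p(q)\sum_j \sigma_{n,j}^{-2q}) = 1$, and the $(2,2)$-entry is $\sum_i p/(np) = 1$ since $\Var[\lvert X_i \rvert^p] = V_p(p) = p$. The off-diagonal entry is
\[
\frac{\sum_{i=1}^n \sigma_{n,i}^{-q} \, C_p(q,p)}{\sqrt{np}\,\bigl(V_p(q)\sum_{j=1}^n \sigma_{n,j}^{-2q}\bigr)^{1/2}} = \frac{C_p(p,q)}{\sqrt{p\,V_p(q)}} \cdot \frac{\sum_{i=1}^n \sigma_{n,i}^{-q}}{\sqrt{n}\,\bigl(\sum_{j=1}^n \sigma_{n,j}^{-2q}\bigr)^{1/2}},
\]
and by the definition of $G_{q,\bsigma}$ this converges to $\rho = C_p(p,q)G_{q,\bsigma}/\sqrt{p\,V_p(q)}$. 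Thus $S_n \to \begin{psmallmatrix} 1 & \rho \\ \rho & 1\end{psmallmatrix}$; since $\lvert \rho \rvert < 1$ (which follows from the Cauchy--Schwarz-type bound $C_p(p,q)^2 \le V_p(p)V_p(q) = p\,V_p(q)$ together with $G_{q,\bsigma} \le 1$, and the degenerate case $p=q$, $G_{q,\bsigma}=1$ is excluded by hypothesis), the limiting matrix is positive definite, so $s_n^2 = \lambda_{\min}(S_n) \to \lambda_{\min}$ of the limit $> 0$; in particular $s_n$ is bounded below by a positive constant for large $n$. Then $S_n^{-1/2} \to \begin{psmallmatrix} 1 & \rho \\ \rho & 1\end{psmallmatrix}^{-1/2}$, and composing the convergence $S_n^{-1/2}\sum_i X_{n,i} \to \Norm(\vek 0, I_2)$ with this convergent linear map gives the stated limit law via Slutsky.

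The main work is verifying the multivariate Lindeberg condition, and this is where Noether's condition~\eqref{eq:feller_sigma} enters. Bounding $\lVert X_{n,i}\rVert_2 \le \lvert (X_{n,i})_1\rvert + \lvert (X_{n,i})_2\rvert$, it suffices to check the Lindeberg condition for each coordinate separately. For the second coordinate the summands are i.i.d.\ (up to the $1/\sqrt{np}$ scaling), so the classical Lindeberg condition holds automatically provided $\Exp[\lvert X_1\rvert^{2p}] < \infty$, which is true for the $p$-Gaussian (a Gaussian-type tail when $p < \infty$). For the first coordinate, write $c_n := (V_p(q)\sum_j \sigma_{n,j}^{-2q})^{1/2}$; then $\lvert (X_{n,i})_1 \rvert = \sigma_{n,i}^{-q}\lvert \lvert X_i\rvert^q - M_p(q)\rvert / c_n$. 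The crucial point is that $\lvert (X_{n,i})_1 \rvert \ge \eps s_n$ forces $\lvert \lvert X_i\rvert^q - M_p(q)\rvert \ge \eps s_n c_n / \sigma_{n,i}^{-q} \ge \eps s_n c_n / \max_j \sigma_{n,j}^{-q}$, and by~\eqref{eq:feller_sigma} the ratio $c_n / \max_j \sigma_{n,j}^{-q} = (V_p(q))^{1/2}\bigl(\sum_j \sigma_{n,j}^{-2q}\bigr)^{1/2}/\max_j\sigma_{n,j}^{-q} \to \infty$; since $s_n$ is bounded below, the threshold $\eps s_n c_n/\max_j \sigma_{n,j}^{-q} \to \infty$. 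Hence, using $\Exp[(\lvert X_i\rvert^q - M_p(q))^2 \, 1_{[\lvert \lvert X_i\rvert^q - M_p(q)\rvert \ge R]}] \to 0$ as $R \to \infty$ (dominated convergence, valid since $\Exp[\lvert X_1\rvert^{2q}] < \infty$), one gets a uniform-in-$i$ bound $\Exp[\sigma_{n,i}^{-2q}(\lvert X_i\rvert^q-M_p(q))^2 1_{[\cdots]}] \le \sigma_{n,i}^{-2q}\,o(1)$, and summing over $i$ and dividing by $s_n^2 c_n^2 = s_n^2 V_p(q)\sum_i \sigma_{n,i}^{-2q}$ yields $o(1)$. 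I expect the only genuinely delicate point to be organizing this last estimate so that the $o(1)$ is uniform in $i$ — that is, choosing $R_n = \eps s_n c_n/\max_j\sigma_{n,j}^{-q}$, noting $R_n \to \infty$ uniformly, and applying the tail bound with this common $R_n$; everything else is a bookkeeping exercise with the moment formulas.
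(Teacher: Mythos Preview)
Your approach is correct and essentially the same as the paper's: both apply Proposition~\ref{prop:clt_multivar}, compute $S_n$ converging to $\begin{psmallmatrix}1&\rho\\\rho&1\end{psmallmatrix}$ with $|\rho|<1$ (so $s_n$ is eventually bounded below), verify Lindeberg via Noether's condition~\eqref{eq:feller_sigma} plus dominated convergence, and finish by Slutsky. The only organizational difference is in the Lindeberg verification. The paper keeps the i.i.d.\ vector $\zeta_i=(|X_i|^q-M_p(q),\,|X_i|^p-1)$ together, observes $\lVert\theta_{n,i}\zeta_i\rVert_2^2\le\frac{\max_j\sigma_{n,j}^{-2q}}{\sum_j\sigma_{n,j}^{-2q}}\,\lVert\zeta_i\rVert_2^2$ (valid since $\frac{1}{n}\le\frac{\max_j\sigma_{n,j}^{-2q}}{\sum_j\sigma_{n,j}^{-2q}}$ by pigeonhole), and then a single dominated-convergence step for $\lVert\zeta_1\rVert_2^2$ handles everything at once. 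Your coordinate-wise reduction works too, but the sentence ``it suffices to check the Lindeberg condition for each coordinate separately'' hides a wrinkle: bounding $1_{[\lVert X_{n,i}\rVert_2\ge\eps s_n]}\le 1_{[|(X_{n,i})_1|\ge\eps s_n/\sqrt2]}+1_{[|(X_{n,i})_2|\ge\eps s_n/\sqrt2]}$ produces cross-terms like $\sum_i\Exp\bigl[|(X_{n,i})_1|^2\,1_{[|(X_{n,i})_2|\ge\eps s_n/\sqrt2]}\bigr]$ that you must also bound. These succumb to exactly your dominated-convergence argument (the event forces $\bigl||X_i|^p-1\bigr|$ past a threshold tending to $\infty$, and $(|X_i|^q-M_p(q))^2$ is integrable), so it is a one-line addition rather than a gap---but it should be written out.
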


\begin{proof}
\textit{Case~\(p < \infty\):} We rewrite,
\begin{equation*}\label{eq:xieta1}
\begin{pmatrix} \xi_n \\ \eta_n \end{pmatrix} = \begin{pmatrix} \frac{1}{\sqrt{V_p(q)}} & 0 \\ 0 & \frac{1}{\sqrt{p}} \end{pmatrix} \sum_{i = 1}^n \theta_{n, i} \zeta_i,
\end{equation*}
where we have defined
\begin{equation*}
\theta_{n, i} := \begin{pmatrix} \frac{\sigma_{n, i}^{-q}}{(\sum_{j = 1}^n \sigma_{n, j}^{-2 q})^{1/2}} & 0 \\ 0 & \frac{1}{\sqrt{n}} \end{pmatrix} \quad \text{and} \quad \zeta_i := \begin{pmatrix} \lvert X_i \rvert^q - M_p(q) \\ \lvert X_i \rvert^p - 1 \end{pmatrix};
\end{equation*}
the diagonal matrices \(\theta_{n, i}\) always are regular, and the sequence \((\zeta_n)_{n \in \N}\) consists of independent and identically distributed random vectors. We are going to apply Proposition~\ref{prop:clt_multivar} to get a CLT for the array \(((\theta_{n, i} \zeta_i)_{i = 1}^n)_{n \in \N}\). The covariance matrix is
\begin{equation}\label{eq:xieta4}
S_n := \sum_{i = 1}^n \Exp\bigl[ (\theta_{n, i} \zeta_i) \otimes (\theta_{n, i} \zeta_i) \bigr] = \begin{pmatrix} V_p(q) & \frac{\sum_{i = 1}^n \sigma_{n, i}^{-q}}{(n \sum_{i = 1}^n \sigma_{n, i}^{-2 q})^{1/2}} \, C_p(p, q) \\ \frac{\sum_{i = 1}^n \sigma_{n, i}^{-q}}{(n \sum_{i = 1}^n \sigma_{n, i}^{-2 q})^{1/2}} \, C_p(p, q) & p \end{pmatrix},
\end{equation}
and its smallest eigenvalue is
\begin{equation*}
s_n^2 := \frac{p + V_p(q)}{2} - \Bigl[ \Bigl( \frac{p + V_p(q)}{2} \Bigr)^2 - p V_p(q) + g_n^2 \, C_p(p, q)^2 \Bigr]^{1/2},
\end{equation*}
where we have abbreviated \(g_n := \frac{\sum_{i = 1}^n \sigma_{n, i}^{-q}}{(n \sum_{i = 1}^n \sigma_{n, i}^{-2 q})^{1/2}}\). By definition we know \(G_{q, \bsigma} = \lim_{n \to \infty} g_n\), therefore also
\begin{align*}
s_0^2 &:= \lim_{n \to \infty} s_n^2 = \frac{p + V_p(q)}{2} - \Bigl[ \Bigl( \frac{p + V_p(q)}{2} \Bigr)^2 - p V_p(q) + G_{q, \bsigma}^2 \, C_p(p, q)^2 \Bigr]^{1/2}\\
&= \frac{p + V_p(q)}{2} - \Bigl[ \Bigl( \frac{p + V_p(q)}{2} \Bigr)^2 - (1 - \rho^2) p V_p(q) \Bigr]^{1/2}
\end{align*}
exists, and by the premises of Theorem~\ref{thm:clt_ellipsoid} we get \(\lvert \rho \rvert < 1\), hence \(s_0 > 0\). Now there exists \(n_1 \in \N\) such that \(s_n \geq \frac{1}{2} \, s_0\) for all \(n \geq n_1\). Next we have the estimate
\begin{equation}\label{eq:xieta2}
\lVert \theta_{n, i} \zeta_i \rVert_2^2 \leq \max\biggl\{ \frac{\sigma_{n, i}^{-2 q}}{\sum_{j = 1}^n \sigma_{n, j}^{-2 q}}, \frac{1}{n} \biggr\} \lVert \zeta_i \rVert_2^2 \leq \frac{\max\limits_{1 \leq j \leq n} \sigma_{n, j}^{-2 q}}{\sum_{j = 1}^n \sigma_{n, j}^{-2 q}} \, \lVert \zeta_i \rVert_2^2,
\end{equation}
where the second inequality is valid since, because of \(\sum_{i = 1}^n \frac{\sigma_{n, i}^{-2 q}}{\sum_{j = 1}^n \sigma_{n, j}^{-2 q}} = 1\), there is at least one index \(i \in \{1,\dots,n\}\) such that \(\frac{\sigma_{n, i}^{-2 q}}{\sum_{j = 1}^n \sigma_{n, j}^{-2 q}} \geq \frac{1}{n}\). We also get the similar estimate
\begin{equation}\label{eq:xieta3}
\lVert \theta_{n, i} \zeta_i \rVert_2^2 \leq \biggl( \frac{\sigma_{n, i}^{-2 q}}{\sum_{i = 1}^n \sigma_{n, j}^{-2 q}} + \frac{1}{n} \biggr) \lVert \zeta_i \rVert_2^2.
\end{equation}
By its definition, \(\lVert \zeta_1 \rVert_2^2\) is integrable and hence almost surely finite, and the family of events \(([\lVert \zeta_1 \rVert_2^2 \geq x^2])_{x \in (0, \infty)}\) is monotonically decreasing, which implies
\begin{equation*}
\lim_{x \to \infty} 1_{[\lVert \zeta_1 \rVert_2^2 \geq x^2]} = 1_{[\lVert \zeta_1 \rVert_2^2 = \infty]} = 0 \text{ almost surely.}
\end{equation*}
Dominated convergence then leads to
\begin{equation*}
\lim_{x \to \infty} \Exp\bigl[ \lVert \zeta_1 \rVert_2^2 \, 1_{[\lVert \zeta_1 \rVert_2^2 \geq x^2]} \bigr] = 0.
\end{equation*}
Therefore, for any \(\eps \in (0, \infty)\) there exists \(x_0 \in (0, \infty)\) such that, for any \(x \geq x_0\),
\begin{equation*}
\Exp\bigl[ \lvert \zeta_1 \rVert_2^2 \, 1_{[\lvert \zeta_1 \rVert_2^2 \geq x^2]} \bigr] < \frac{\eps s_0^2}{8}.
\end{equation*}
Now concerning the multivariate Lindeberg's condition let \(\delta, \eps \in (0, \infty)\) and let \(x \geq x_0\) as before. By Noether's condition~\eqref{eq:feller_sigma} there exists \(n_2 \in \N\) such that, for all \(n \geq n_2\),
\begin{equation*}
\frac{\max\limits_{1 \leq i \leq n} \sigma_{n, i}^{-2 q}}{\sum_{i = 1}^n \sigma_{n, i}^{-2 q}} \leq \frac{\delta^2 \, s_0^2}{4 x^2}.
\end{equation*}
Let \(n \geq \max\{n_1, n_2\}\). Together with estimate~\eqref{eq:xieta2} the last display gives
\begin{equation*}
[\lVert \theta_{n, i} \zeta_i \rVert_2^2 \geq \delta^2 \, s_n^2] \subset [\lVert \zeta_i \rVert_2^2 \geq x^2]
\end{equation*}
for any \(i \in \{1,\dots,n\}\), which finally yields, employing also~\eqref{eq:xieta3},
\begin{align*}
\frac{1}{s_n^2} \sum_{i = 1}^n \Exp\bigl[ \lVert \theta_{n, i} \zeta_i \rVert_2^2 \, 1_{[\lVert \theta_{n, i} \zeta_i \rVert_2^2 \geq \delta^2 \, s_n^2]} \bigr] &\leq \frac{4}{s_0^2} \sum_{i = 1}^n \biggl( \frac{\sigma_{n, i}^{-2 q}}{\sum_{j = 1}^n \sigma_{n, j}^{-2 q}} + \frac{1}{n} \biggr) \Exp\bigl[ \lVert \zeta_1 \rVert_2^2 \, 1_{[\lVert \zeta_1 \rVert_2^2 \geq x^2]} \bigr]\\
&< \frac{4}{s_0^2} \cdot (1 + 1) \cdot \frac{\eps s_0^2}{8} = \eps,
\end{align*}
and this proves that the array \(((\theta_{n, i} \zeta_i)_{i=1}^n)_{n \in \N}\) satisfies Lindeberg's condition. Now from Proposition~\ref{prop:clt_multivar} we conclude
\begin{equation*}
\biggl( S_n^{-1/2} \sum_{i = 1}^n \theta_{n, i} \zeta_i \biggr)_{n\in\N} \xrightarrow{\mathrm{d}} Z \sim \Norm(\vek{0}, I_2).
\end{equation*}
From the definition of \(S_n\) in~\eqref{eq:xieta4} we see that \(S_0 := \lim_{n \to \infty} S_n\) exists, and therefore via Slutsky's theorem we obtain
\begin{equation*}
\begin{pmatrix} \xi_n \\ \eta_n \end{pmatrix} = \begin{pmatrix} \frac{1}{\sqrt{V_p(q)}} & 0 \\ 0 & \frac{1}{\sqrt{p}} \end{pmatrix} S_n^{1/2} \, S_n^{-1/2} \sum_{i = 1}^n \theta_{n, i} \zeta_i \xrightarrow[n \to \infty]{\mathrm{d}} \begin{pmatrix} \frac{1}{\sqrt{V_p(q)}} & 0 \\ 0 & \frac{1}{\sqrt{p}} \end{pmatrix} S_0^{1/2} \, Z.
\end{equation*}
(Recall Slutsky's theorem: given sequences \((X_n)_{n \in \N}\) and \((Y_n)_{n \in \N}\) of random variables and a continuous function \(f\), then \((f(X_n, Y_n))_{n \in \N}\) converges to \(f(X, y)\) in distribution whenever \((X_n)_{n \in \N}\) converges to the random variable \(X\) in distribution and \((Y_n)_{n \in \N}\) converges to the constant \(y\) in probability; in particular the latter is fulfilled if \((Y_n)_{n \in \N}\) is a convergent deterministic sequence.)

The limit random vector follows a normal distribution with expectation \(\vek{0}\) and covariance-matrix
\begin{align*}
&\quad \begin{pmatrix} \frac{1}{\sqrt{V_p(q)}} & 0 \\ 0 & \frac{1}{\sqrt{p}} \end{pmatrix} S_0^{1/2} \, I_2 S_0^{1/2} \begin{pmatrix} \frac{1}{\sqrt{V_p(q)}} & 0 \\ 0 & \frac{1}{\sqrt{p}} \end{pmatrix}\\
&= \begin{pmatrix} \frac{1}{\sqrt{V_p(q)}} & 0 \\ 0 & \frac{1}{\sqrt{p}} \end{pmatrix} \begin{pmatrix} V_p(q) & G_{q, \bsigma} C_p(p, q) \\ G_{q, \bsigma} C_p(p, q) & p \end{pmatrix} \begin{pmatrix} \frac{1}{\sqrt{V_p(q)}} & 0 \\ 0 & \frac{1}{\sqrt{p}} \end{pmatrix} \\
&= \begin{pmatrix} 1 & \frac{G_{q, \bsigma} C_p(p, q)}{\sqrt{p V_p(q)}} \\ \frac{G_{q, \bsigma} C_p(p, q)}{\sqrt{p V_p(q)}} & 1 \end{pmatrix}\\
&= \begin{pmatrix} 1 & \rho \\ \rho & 1 \end{pmatrix},
\end{align*}
as claimed.

\textit{Case~\(p = \infty\):} We do not spell out any details here, because the proof is analogous to the previous setup, apart from the fact that only the first component of every vector and the \((1, 1)\)-entry of every matrix is taken into consideration. In that case we have \(S_n = S_0 = s_n^2 = s_0^2 = V_p(q)\) throughout, and inequality~\eqref{eq:xieta3} reduces to the equality \(\lvert \theta_{n, i} \zeta_i \rvert^2 = \frac{\sigma_{n, i}^{-2 q}}{\sum_{j = 1}^n \sigma_{n, j}^{-2 q}} \, \lvert \zeta_i \rvert^2\).
\end{proof}

We are also going to use the following result, which allows the passing from convergence in distribution to almost sure convergence; it is commonly known as Skorokhod-representation or Skorokhod(--Dudley)-device and can be found, e.g., in~\cite{Klenke2014}, Theorem~17.56.

\begin{lemma}
Let \(E\) be a Polish space and let \((X_n)_{n\in\N}\) be a sequence of \(E\)-valued random variables converging in distribution to the \(E\)-valued random variable \(X_0\). Then there exist a probability space \((\widetilde{\Omega}, \widetilde{\mathcal{A}}, \widetilde{\Pro})\) and \(E\)-valued random variables \(\widetilde{X}_0, \widetilde{X}_1, \widetilde{X}_2, \dotsc\) defined on \(\widetilde{\Omega}\) such that
\begin{equation*}
\widetilde{X}_n \stackrel{\mathrm{d}}{=} X_n \quad \text{for all } n \in \N_0
\end{equation*}
and, as \(n\to\infty\),
\begin{equation*}
\widetilde{X}_n \to \widetilde{X}_0 \quad \text{\(\widetilde{\Pro}\)-almost surely.}
\end{equation*}
\end{lemma}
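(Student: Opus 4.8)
This is the classical Skorokhod representation theorem, and the simplest course is to cite it from~\cite{Klenke2014}, Theorem~17.56; if one wanted a self-contained argument, the plan has two layers: first the elementary scalar case, and then a reduction of the general Polish case to a scalar-type construction via nested partitions of \(E\) pulled back to the unit interval.

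First I would dispose of \(E = \R\). Take \((\widetilde{\Omega}, \widetilde{\mathcal{A}}, \widetilde{\Pro}) = ([0,1), \mathcal{B}([0,1)), \lambda)\) with \(\lambda\) the Lebesgue measure, and for \(n \in \N_0\) set \(\widetilde{X}_n(\omega) := \inf\{x \in \R : F_n(x) \ge \omega\}\), the left-continuous generalized inverse of the distribution function \(F_n\) of \(X_n\). A routine check gives \(\widetilde{X}_n \stackrel{\mathrm{d}}{=} X_n\), while weak convergence \(F_n \to F_0\) forces \(\widetilde{X}_n(\omega) \to \widetilde{X}_0(\omega)\) at every \(\omega\) that is a continuity point of the monotone map \(\omega \mapsto \widetilde{X}_0(\omega)\), hence off an at most countable, and therefore \(\lambda\)-null, set; this yields \(\widetilde{X}_n \to \widetilde{X}_0\) \(\lambda\)-almost surely.

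For general Polish \(E\) with complete metric \(d\), write \(\mu_n := \mathcal{L}(X_n)\). The core step is to construct, for each \(m \in \N\), a countable Borel partition \(\mathcal{P}_m = (B_{m,j})_j\) of \(E\) with \(\operatorname{diam} B_{m,j} < 2^{-m}\), \(\mu_0(\partial B_{m,j}) = 0\), and \(\mathcal{P}_{m+1}\) refining \(\mathcal{P}_m\). To get one level I would cover \(E\) by countably many open balls of radius \(< 2^{-m-1}\) (separability), shrink each chosen ball slightly so that its boundary sphere is \(\mu_0\)-null (only countably many concentric radii carry positive \(\mu_0\)-mass), disjointify the resulting family while keeping boundaries null via \(\partial(A \setminus B) \subseteq \partial A \cup \partial B\), and intersect with \(\mathcal{P}_{m-1}\) using \(\partial(A \cap B) \subseteq \partial A \cup \partial B\) to enforce nesting; the at most countably many \(\mu_0\)-null cells can be merged into one exceptional cell. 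By the portmanteau theorem, \(\mu_0(\partial B_{m,j}) = 0\) together with \(\mu_n \Rightarrow \mu_0\) gives \(\mu_n(B_{m,j}) \to \mu_0(B_{m,j})\) as \(n \to \infty\) for every \(m, j\). Then, again on \(\widetilde{\Omega} = [0,1)\) with Lebesgue measure, I would fix a point \(x_{m,j} \in B_{m,j}\) and, for each \(n \in \N_0\), cut \([0,1)\) into consecutive half-open intervals \((I^{(n)}_{m,j})_j\), in index order, with \(\lambda(I^{(n)}_{m,j}) = \mu_n(B_{m,j})\) and nested in \(m\) (possible because \(\mathcal{P}_{m+1}\) refines \(\mathcal{P}_m\)). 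The level-\(m\) step function \(\omega \mapsto x_{m,j}\) on \(I^{(n)}_{m,j}\) is, for each fixed \(n\) and \(\omega\), a Cauchy sequence in \((E,d)\) in \(m\) (consecutive terms lie in a common cell, hence are within \(2^{-m}\)), so by completeness it converges to some \(\widetilde{X}_n(\omega)\); passing to the limit in the marginal laws shows \(\mathcal{L}(\widetilde{X}_n) = \mu_n\). Finally, since \(\mu_n(B_{m,j}) \to \mu_0(B_{m,j})\), the endpoints of the intervals \((I^{(n)}_{m,j})_j\) converge as \(n \to \infty\) to those of \((I^{(0)}_{m,j})_j\); hence for every \(\omega\) outside the countable set of all such endpoints and every \(m\) there is a \(j\) with \(\omega\) in the interior of \(I^{(0)}_{m,j}\), so \(\omega \in I^{(n)}_{m,j}\) for all large \(n\), and then \(\widetilde{X}_n(\omega), \widetilde{X}_0(\omega) \in \overline{B_{m,j}}\), giving \(d(\widetilde{X}_n(\omega), \widetilde{X}_0(\omega)) \le 2^{-m}\) eventually. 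Letting \(m \to \infty\) yields \(\widetilde{X}_n \to \widetilde{X}_0\) \(\lambda\)-almost surely.

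The hard part will be the bookkeeping that keeps the interval decompositions consistent in \emph{both} indices at once --- nested in the refinement level \(m\) \emph{and} convergent, endpoint by endpoint, in the sequence index \(n\) --- so that one simultaneously gets the exact marginal \(\mathcal{L}(\widetilde{X}_n) = \mu_n\) for each fixed \(n\) and the almost sure convergence along the sequence. Securing the partitions with \(\mu_0\)-null boundaries and vanishing diameters is exactly where the two Polish features enter: separability for the countable covers, and completeness so that the Cauchy limits \(\widetilde{X}_n(\omega)\) really exist in \(E\).
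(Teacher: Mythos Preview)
The paper does not prove this lemma at all; it merely states it and cites \cite{Klenke2014}, Theorem~17.56, which is exactly your opening line. Your additional self-contained sketch goes beyond what the paper provides and is a sound outline of a standard Skorokhod--Dudley construction.
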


\begin{proof}[Proof of Theorem~\ref{thm:clt_ellipsoid}]
\textit{Case~\(p < \infty\):} With Proposition~\ref{prop:schechtmanzinn} we get (write \(\vek{X}_n := (X_i)_{i=1}^n\))
\begin{align*}
\lVert \inv{\Sigma_n} Z_n \rVert_q &\stackrel{\text{\upshape d}}{=} U^{\frac{1}{n}} \, \frac{\lVert \inv{\Sigma_n} \vek{X}_n \rVert_q}{\lVert \vek{X}_n \rVert_p} = U^{\frac{1}{n}} \, \frac{\bigl( \sum_{i = 1}^n \bigl\lvert \frac{X_i}{\sigma_{n, i}} \bigr\rvert^q \bigr)^{1/q}}{\bigl( \sum_{i = 1}^n \lvert X_i \rvert^p \bigr)^{1/p}} =\\
&= U^{\frac{1}{n}} \, \frac{\bigl( V_p(q)^{1/2} \, \bigl( \sum_{i = 1}^n \sigma_{n, i}^{-2 q} \bigr)^{1/2} \, \xi_n + M_p(q) \sum_{i = 1}^n \sigma_{n, i}^{-q} \bigr)^{1/q}}{(\sqrt{n p} \, \eta_n + n)^{1/p}}\\
&= U^{\frac{1}{n}} \, \frac{M_p(q)^{1/q} \, \bigl( \sum_{i = 1}^n \sigma_{n, i}^{-q} \bigr)^{1/q}}{n^{1/p}} \, \frac{\Bigl( 1 + \frac{V_p(q)^{1/2}}{M_p(q)} \, \frac{(\sum_{i = 1}^n \sigma_{n, i}^{-2 q})^{1/2}}{\sum_{i = 1}^n \sigma_{n, i}^{-q}} \, \xi_n \Bigr)^{1/q}}{\bigl( 1 + \sqrt{\frac{p}{n}} \, \eta_n \bigr)^{1/p}}\\
&= U^{\frac{1}{n}} \, \frac{M_p(q)^{1/q} \, \bigl( \sum_{i = 1}^n \sigma_{n, i}^{-q} \bigr)^{1/q}}{n^{1/p}} \, F\biggl( \frac{\bigl( \sum_{i = 1}^n \sigma_{n, i}^{-2 q} \bigr)^{1/2}}{\sum_{i = 1}^n \sigma_{n, i}^{-q}} \, \xi_n, \frac{\eta_n}{\sqrt{n}} \biggr),
\end{align*}
where we have defined the function
\begin{equation*}
F(x, y) := \frac{\bigl( 1 + \frac{V_p(q)^{1/2}}{M_p(q)} \, x \bigr)^{1/q}}{(1 + \sqrt{p} \, y)^{1/p}}.
\end{equation*}
By the Skorokhod-representation there exist a probability-space \((\widetilde{\Omega}, \widetilde{\mathcal{A}}, \widetilde{\Pro})\) and random variables \(\widetilde{\xi}_n\), \(\widetilde{\eta}_n\) for \(n \in \N\) and \(\widetilde\xi\), \(\widetilde{\eta}\) defined on \(\widetilde{\Omega}\) such that
\begin{equation*}
(\widetilde{\xi}_n, \widetilde{\eta}_n) \stackrel{\text{\upshape d}}{=} (\xi_n, \eta_n) \quad \text{and} \quad (\widetilde{\xi}, \widetilde{\eta}) \stackrel{\text{\upshape d}}{=} (\xi, \eta)
\end{equation*}
and such that
\begin{equation*}
\bigl( (\widetilde{\xi}_n, \widetilde{\eta}_n) \bigr)_{n \in \N} \to (\widetilde{\xi}, \widetilde{\eta}) \quad \text{\(\widetilde{\Pro}\)-a.s.}
\end{equation*}
From condition~\eqref{eq:sigma} we obtain, writing \(\tau_{n, i}\) like in the proof of Theorem~\ref{thm:threshold_theorem} and also respecting~Remark~\ref{rmk:fqsigma_neq_zero},
\begin{equation*}
\frac{\bigl( \sum_{i = 1}^n \sigma_{n, i}^{-2 q} \bigr)^{1/2}}{\sum_{i = 1}^n \sigma_{n, i}^{-q}} = \frac{\bigl( \frac{1}{n^2} \sum_{i = 1}^n \tau_{n, i}^{-2 q} \bigr)^{1/2}}{\frac{1}{n} \sum_{i = 1}^n \tau_{n, i}^{-q}} \xrightarrow[n \to \infty]{} \frac{0}{F_{q, \bsigma}} = 0,
\end{equation*}
and therefore \(\Bigl( \frac{(\sum_{i = 1}^n \sigma_{n, i}^{-2 q})^{1/2}}{\sum_{i = 1}^n \sigma_{n, i}^{-q}} \, \widetilde{\xi}_n \Bigr)_{n \in \N} \to 0\) and \(\bigl( \frac{\widetilde{\eta}_n}{\sqrt{n}} \bigr)_{n \in \N} \to 0\), either convergence being almost sure. A Taylor expansion of \(F\) around \((0, 0)\) yields
\begin{equation*}
F(x, y) = 1 + \frac{V_p(q)^{1/2} \, x}{q M_p(q)} - \frac{y}{\sqrt{p}} + \BigO(x^2 + y^2).
\end{equation*}
Furthermore, we know \(U^{1/n} = \ee^{\log(U)/n} = 1 + \BigO(\frac{1}{n})\) almost surely. Putting things together, we get
\begin{align*}
\lVert \inv{\Sigma_n} Z_n \rVert_q &\stackrel{\text{\upshape d}}{=} U^{\frac{1}{n}} \, \frac{M_p(q)^{1/q} \, \bigl( \sum_{i = 1}^n \sigma_{n, i}^{-q} \bigr)^{1/q}}{n^{1/p}} \, F\biggl( \frac{\bigl( \sum_{i = 1}^n \sigma_{n, i}^{-2 q} \bigr)^{1/2}}{\sum_{i = 1}^n \sigma_{n, i}^{-q}} \, \widetilde{\xi}_n, \frac{\widetilde{\eta}_n}{\sqrt{n}} \biggr) =\\
&= \frac{M_p(q)^{1/q} \, \bigl( \sum_{i = 1}^n \sigma_{n, i}^{-q} \bigr)^{1/q}}{n^{1/p}} \, \biggl( 1 + \frac{V_p(q)^{1/2}}{q M_p(q)} \, \frac{\bigl( \sum_{i = 1}^n \sigma_{n, i}^{-2 q} \bigr)^{1/2}}{\sum_{i = 1}^n \sigma_{n, i}^{-q}} \, \widetilde{\xi}_n\\
&\qquad - \frac{\widetilde{\eta}_n}{\sqrt{n p}} + \BigO\biggl( \frac{\sum_{i = 1}^n \sigma_{n, i}^{-2 q}}{\bigl( \sum_{i = 1}^n \sigma_{n, i}^{-q} \bigr)^2} + \frac{1}{n} \biggr) \biggr),
\end{align*}
or equivalently,
\begin{multline}\label{eq:var2_gl}
\frac{\sum_{i = 1}^n \sigma_{n, i}^{-q}}{\bigl( \sum_{i = 1}^n \sigma_{n, i}^{-2 q} \bigr)^{1/2}} \biggl( \frac{n^{1/p} \, \lVert \inv{\Sigma_n} Z_n \rVert_q}{M_p(q)^{1/q} \, \bigl( \sum_{i = 1}^n \sigma_{n, i}^{-q} \bigr)^{1/q}} - 1 \biggr) \stackrel{\text{\upshape d}}{=}\\
\stackrel{\text{\upshape d}}{=} \frac{V_p(q)^{1/2}}{q M_p(q)} \, \widetilde{\xi}_n - \frac{\sum_{i = 1}^n \sigma_{n, i}^{-q}}{\sqrt{n} \bigl( \sum_{i = 1}^n \sigma_{n, i}^{-2 q} \bigr)^{1/2}} \, \frac{\widetilde{\eta}_n}{\sqrt{p}} + \BigO\biggl( \frac{\bigl( \sum_{i = 1}^n \sigma_{n, i}^{-2 q} \bigr)^{1/2}}{\sum_{i = 1}^n \sigma_{n, i}^{-q}} + \frac{\sum_{i = 1}^n \sigma_{n, i}^{-q}}{n \bigl( \sum_{i = 1}^n \sigma_{n, i}^{-2 q} \bigr)^{1/2}} \biggr).
\end{multline}
From the definition of \(G_{q, \bsigma}\), we obtain
\begin{equation*}
\frac{\sum_{i = 1}^n \sigma_{n, i}^{-q}}{n \bigl( \sum_{i = 1}^n \sigma_{n, i}^{-2 q} \bigr)^{1/2}} = \frac{1}{\sqrt{n}} \, \frac{\sum_{i = 1}^n \sigma_{n, i}^{-q}}{\sqrt{n} \bigl( \sum_{i = 1}^n \sigma_{n, i}^{-2 q} \bigr)^{1/2}} \xrightarrow[n \to \infty]{} 0 \cdot G_{q, \bsigma} = 0.
\end{equation*}
Therefore, the right-hand side of~\eqref{eq:var2_gl} converges almost surely, and hence also in distribution, towards
\begin{equation*}
\frac{V_p(q)^{1/2}}{q M_p(q)} \, \widetilde{\xi} - \frac{G_{q, \bsigma}}{\sqrt{p}} \, \widetilde{\eta} \sim \Norm(0, s^2),
\end{equation*}
where
\begin{align*}
s^2 &= \frac{V_p(q)}{q^2 M_p(q)^2} \Var[\widetilde{\xi}] - \frac{2 V_p(q)^{1/2} G_{q, \bsigma}}{q \sqrt{p} M_p(q)} \Cov[\widetilde{\xi}, \widetilde{\eta}] + \frac{G_{q, \bsigma}^2}{p} \Var[\widetilde{\eta}]\\
&= \frac{V_p(q)}{q^2 M_p(q)^2} - \frac{2 V_p(q)^{1/2} G_{q, \bsigma}}{q \sqrt{p} M_p(q)} \, \frac{C_p(p, q) G_{q, \bsigma}}{\sqrt{p \, V_p(q)}} + \frac{G_{q, \bsigma}^2}{p}\\
&= \frac{V_p(q)}{q^2 M_p(q)^2} - \frac{2 C_p(p, q) G_{q, \bsigma}^2}{p q M_p(q)} + \frac{G_{q, \bsigma}^2}{p}.
\end{align*}
From~\eqref{eq:var2_gl} also follows convergence in distribution of the left-hand side, which is exactly what we have claimed.

\textit{Case~\(p = \infty\):} Here \(Z_n = \vek{X}_n\) and so we have
\begin{align*}
\lVert \inv{\Sigma_n} Z_n \rVert_q &= \biggl( \sum_{i = 1}^n \Bigl\lvert \frac{X_i}{\sigma_{n, i}} \Bigr\rvert^q \biggr)^{1/q}\\
&= \biggl( V_\infty(q)^{1/2} \biggl( \sum_{i = 1}^n \sigma_{n, i}^{-2 q} \biggr)^{1/2} \, \xi_n + M_\infty(q) \sum_{i = 1}^n \sigma_{n, i}^{-q} \biggr)^{1/q}\\
&= M_\infty(q)^{1/q} \biggl( \sum_{i = 1}^n \sigma_{n, i}^{-q} \biggr)^{1/q} \biggl( 1 + \frac{V_\infty(q)^{1/2} \bigl( \sum_{i = 1}^n \sigma_{n, i}^{-2 q} \bigr)^{1/2}}{M_\infty(q) \sum_{i = 1}^n \sigma_{n, i}^{-q}} \, \xi_n \biggr)^{1/q}\\
&= M_\infty(q)^{1/q} \biggl( \sum_{i = 1}^n \sigma_{n, i}^{-q} \biggr)^{1/q} \biggl( 1 + \frac{V_\infty(q)^{1/2} \bigl( \sum_{i = 1}^n \sigma_{n, i}^{-2 q} \bigr)^{1/2}}{q M_\infty(q) \sum_{i = 1}^n \sigma_{n, i}^{-q}} \, \xi_n + \BigO\biggl( \frac{\sum_{i = 1}^n \sigma_{n, i}^{-2 q}}{\bigl( \sum_{i = 1}^n \sigma_{n, i}^{-q} \bigr)^2} \biggr) \biggr),
\end{align*}
where we have used Taylor's expansion of \(x \mapsto F(x, 0)\) with the same \(F\) as in the previous case; equivalently,
\begin{equation*}
\frac{\sum_{i = 1}^n \sigma_{n, i}^{-q}}{\bigl( \sum_{i = 1}^n \sigma_{n, i}^{-2 q} \bigr)^{1/2}} \biggl( \frac{\lVert \inv{\Sigma_n} Z_n \rVert_q}{M_\infty(q)^{1/q} \bigl( \sum_{i = 1}^n \sigma_{n, i}^{-q} \bigr)^{1/q}} - 1 \biggr) = \frac{V_\infty(q)^{1/2}}{q M_\infty(q)} \, \xi_n + \BigO\biggl( \frac{\bigl( \sum_{i = 1}^n \sigma_{n, i}^{-2 q} \bigr)^{1/2}}{\sum_{i = 1}^n \sigma_{n, i}^{-q}} \biggr).
\end{equation*}
Lemma~\ref{lem:xi_eta_clt}  says that \((\xi_n)_{n \in \N} \xrightarrow{\mathrm{d}} \xi \sim \Norm(0,1 )\) here. By applying the Skorokhod-representation to \((\xi_n)_{n \in \N}\) and \(\xi\), and observing the conditions on \(\bsigma\) as before, we obtain convergence in distribution towards \(\Norm\bigl( 0, \frac{V_\infty(q)}{q^2 M_\infty(q)^2} \bigr)\). Note that by setting \(\frac{1}{\infty} = 0\) and respecting our convention \(C_\infty(\infty, q) = 0\) the formula for \(s^2\) given in Theorem~\ref{thm:clt_ellipsoid} also is valid for \(p = \infty\) as just proved.
\end{proof}


The proof of Corollary~\ref{cor:threshold_theorem_crit} is now a straightforward application of Theorem~\ref{thm:clt_ellipsoid}.

\begin{proof}[Proof of Corollary~\ref{cor:threshold_theorem_crit}]
Beside the actual statement we are also going to prove that the results of Theorem~\ref{thm:threshold_theorem} are reproduced by the CLT. So let \(t \in (0, \infty)\) and \(n \in \N\), then we can write
\begin{align*}
\vol_n(\D_p^n \cap t \nEll_{\sigma_n}^n) &= \vol_n\bigl( r_{n, p} \B_p^n \cap t r_{n, q} \det(\Sigma_n)^{-\frac{1}{n}} \, \Sigma_n \B_q^n \bigr) = \Pro\Bigl[Z_n \in \frac{t r_{n, q}}{r_{n, p}} \det(\Sigma_n)^{-\frac{1}{n}} \, \Sigma_n \B_q^n \Bigr]\\
&= \Pro\Bigl[ \lVert \inv{\Sigma_n} Z_n \rVert_q \leq \frac{t r_{n, q}}{r_{n, p}} \det(\Sigma_n)^{-\frac{1}{n}} \Bigr]\\
&= \Pro\Biggl[ \frac{\sum_{i = 1}^n \sigma_{n, i}^{-q}}{\bigl( \sum_{i = 1}^n \sigma_{n, i}^{-2 q} \bigr)^{1/2}} \biggl( \frac{n^{1/p} \, \lVert \inv{\Sigma_n} Z_n \rVert_q}{M_p(q)^{1/q} \bigl( \sum_{i = 1}^n \sigma_{n, i}^{-q} \bigr)^{1/q}} - 1 \biggr) \\
&\qquad\quad \leq \frac{\sum_{i = 1}^n \sigma_{n, i}^{-q}}{\bigl( \sum_{i = 1}^n \sigma_{n, i}^{-2 q} \bigr)^{1/2}} \biggl( \frac{t n^{1/p} \, r_{n, q} \det(\Sigma_n)^{-1/n}}{r_{n, p} M_p(q)^{1/q} \bigl( \sum_{i = 1}^n \sigma_{n, i}^{-q} \bigr)^{1/q}} - 1 \biggr) \Biggr].
\end{align*}
Observe that on the right-hand side, we have
\begin{equation*}
\frac{\sum_{i = 1}^n \sigma_{n, i}^{-q}}{\bigl( \sum_{i = 1}^n \sigma_{n, i}^{-2 q} \bigr)^{1/2}} \xrightarrow[n \to \infty]{} \infty \quad \text{and} \quad \frac{n^{1/p} \, r_{n, q} \det(\Sigma_n)^{-1/n}}{r_{n, p} M_p(q)^{1/q} \bigl( \sum_{i = 1}^n \sigma_{n, i}^{-q} \bigr)^{1/q}} \xrightarrow[n \to \infty]{} \frac{A_{p, q}}{F_{q, \bsigma}},
\end{equation*}
and so, if \(t A_{p, q} < F_{q, \bsigma}\), then the term in parentheses converges towards a negative number and the volume in question converges towards zero, and simlarly if \(t A_{p, q} > F_{q, \bsigma}\), then the volume converges towards one. It remains to examine the critical case \(t A_{p, q} = F_{q, \bsigma}\). Closer inspection of the convergence towards \(A_{p, q}\) (for which see~\cite{KPT2019_I}, proof of Corollary~2.1) reveals
\begin{equation*}
\frac{n^{1/p} \, r_{n, q}}{n^{1/q} \, r_{n, p} M_p(q)^{1/q}} = A_{p, q} \Bigl( 1 + \BigO\Bigl( \frac{1}{n} \Bigr) \Bigr),
\end{equation*}
thus we can rewrite
\begin{align*}
\frac{F_{q, \bsigma} n^{1/p} \, r_{n, q} \det(\Sigma_n)^{-1/n}}{A_{p, q} r_{n, p} M_p(q)^{1/q} \bigl( \sum_{i = 1}^n \sigma_{n, i}^{-q} \bigr)^{1/q}} - 1 &= \Bigl( 1 + \BigO\Bigl( \frac{1}{n} \Bigr) \Bigr) \frac{1}{h_n} - 1\\
&= \BigO\Bigl( \frac{1}{n} \Bigr) - \frac{h_n - 1}{h_n}.
\end{align*}
By the comparison of the \(\ell_1\)- and \(\ell_2\)-norms, we see
\begin{equation*}
1 \leq \frac{\sum_{i = 1}^n \sigma_{n, i}^{-q}}{\bigl( \sum_{i = 1}^n \sigma_{n, i}^{-2 q} \bigr)^{1/2}} \leq \sqrt{n},
\end{equation*}
and together with \(\lim_{n \to \infty} h_n = 1\) there follows
\begin{equation*}
\frac{\sum_{i = 1}^n \sigma_{n, i}^{-q}}{\bigl( \sum_{i = 1}^n \sigma_{n, i}^{-2 q} \bigr)^{1/2}} \biggl( \frac{F_{q, \bsigma} n^{1/p} \, r_{n, q} \det(\Sigma_n)^{-1/n}}{A_{p, q} r_{n, p} M_p(q)^{1/q} \bigl( \sum_{i = 1}^n \sigma_{n, i}^{-q} \bigr)^{1/q}} - 1 \biggr) \xrightarrow[n \to \infty]{} -z.
\end{equation*}
This leads to the desired result.
\end{proof}

\section{Proof of the examples}
\label{sec:proof_spec_sigma}


Part~(a) of Proposition~\ref{prop:spec_sigma} shall be tackled directly. In order to prove Proposition~\ref{prop:spec_sigma},~(b), we need a few elementary preliminary lemmas, all of which concern the asymptotics of the terms involved. Their proofs are included for the sake of completeness.

\begin{lemma}\label{lem:factorial_power}
For any \(\alpha \in \R\), as \(n \to \infty\),
\begin{equation*}
n!^{\alpha/n} = \frac{n^\alpha}{\ee^\alpha} \Bigl( 1 + \frac{\alpha \log(n)}{2 n} + \BigO\Bigl( \frac{1}{n} \Bigr) \Bigr) = \frac{n^\alpha}{\ee^\alpha} \Bigl( 1 + \BigO\Bigl( \frac{\log(n)}{n} \Bigr) \Bigr).
\end{equation*}
\end{lemma}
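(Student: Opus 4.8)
The plan is to derive everything from Stirling's formula in its logarithmic form, \(\log n! = n \log n - n + \tfrac12 \log n + \tfrac12 \log(2\pi) + \BigO(\tfrac1n)\). Multiplying through by \(\alpha/n\) gives
\[
\frac{\alpha}{n} \log n! = \alpha \log n - \alpha + \frac{\alpha \log n}{2 n} + \frac{\alpha \log(2\pi)}{2 n} + \BigO\Bigl( \frac{1}{n} \Bigr),
\]
so that, after exponentiating,
\[
n!^{\alpha/n} = \frac{n^\alpha}{\ee^\alpha} \exp\Bigl( \frac{\alpha \log n}{2 n} + \BigO\Bigl( \frac{1}{n} \Bigr) \Bigr).
\]

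Next I would Taylor-expand the exponential. Write \(x_n := \frac{\alpha \log n}{2 n} + \BigO(\tfrac1n)\); since \(x_n \to 0\) we have \(\ee^{x_n} = 1 + x_n + \BigO(x_n^2)\). The only point needing a moment's care is that \(x_n^2 = \BigO\bigl( (\log n)^2 / n^2 \bigr) = \BigO(\tfrac1n)\), because \((\log n)^2 / n \to 0\); hence the quadratic remainder is absorbed into the \(\BigO(\tfrac1n)\) already present. This yields
\[
n!^{\alpha/n} = \frac{n^\alpha}{\ee^\alpha} \Bigl( 1 + \frac{\alpha \log n}{2 n} + \BigO\Bigl( \frac{1}{n} \Bigr) \Bigr),
\]
which is the first claimed form. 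The second form follows immediately, since both \(\frac{\alpha \log n}{2 n}\) and \(\frac1n\) are \(\BigO\bigl( \frac{\log n}{n} \bigr)\).

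There is no real obstacle here: the statement is a direct consequence of Stirling's formula together with a first-order expansion of the exponential, and the whole argument is a short bookkeeping of error terms. The only place one could slip is conflating the orders of \((\log n)^2/n^2\) and \(1/n\), so I would make that comparison explicit as above. If one prefers to avoid quoting the refined Stirling expansion with the \(\tfrac12\log n\) term, the same computation works starting from \(n! = \sqrt{2\pi n}\,(n/\ee)^n\bigl(1 + \BigO(\tfrac1n)\bigr)\), taking \(\alpha/n\)-th powers and noting \(\bigl(1 + \BigO(\tfrac1n)\bigr)^{\alpha/n} = 1 + \BigO(\tfrac1{n^2})\) and \((2\pi n)^{\alpha/(2n)} = \exp\bigl( \tfrac{\alpha \log(2\pi n)}{2n} \bigr) = 1 + \tfrac{\alpha \log n}{2n} + \BigO(\tfrac1n)\).
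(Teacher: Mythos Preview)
Your proof is correct and follows essentially the same route as the paper: apply Stirling's formula (the paper writes it as \(n! = \sqrt{2\pi n}\,(n/\ee)^n\,\ee^{R_n}\) with \(R_n = \BigO(1/n)\), which is your alternative formulation at the end), raise to the \(\alpha/n\)-th power, and Taylor-expand the exponential while observing that the square of the exponent is \(\smallO(1/n)\). The only cosmetic difference is that you start from the logarithmic form of Stirling, whereas the paper starts from the multiplicative one, but the bookkeeping is identical.
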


\begin{proof}
This is little more than an application of Stirling's formula and Taylor's expansion of the exponential function: we have
\begin{align*}
n!^{\alpha/n} &= \Bigl( \sqrt{2 \pi n} \Bigl( \frac{n}{\ee} \Bigr)^n \, \ee^{R_n} \Bigr)^{\frac{\alpha}{n}}\\
&= \frac{n^\alpha}{\ee^\alpha} \, \ee^{\frac{\alpha \log(2 \pi n)}{2 n} + \frac{\alpha R_n}{n}}\\
&= \frac{n^\alpha}{\ee^\alpha} \Bigl( 1 + \frac{\alpha \log(2 \pi n)}{2 n} + \frac{\alpha R_n}{n} + \BigO\Bigl( \Bigl[ \frac{\log(2 \pi n)}{2 n} + \frac{R_n}{n} \Bigr]^2 \Bigr) \Bigr).
\end{align*}
The claim follows by observing \(R_n = \BigO(\frac{1}{n})\) and \(\bigl( \frac{R_n}{n} + \frac{\log(2 \pi n)}{2 n} \bigr)^2 = \smallO(\frac{1}{n})\).
\end{proof}

\begin{lemma}\label{lem:sum_powers}
Let \(\alpha, \beta \in \R\). Then, as \(n \to \infty\),
\begin{equation*}
\sum_{i = 1}^n i^\alpha \log(i + 1)^\beta =
\begin{cases} \BigTh(1) &: \alpha < -1 \text{ or } \alpha = -1 \wedge \beta < -1,\\
\log(\log(n + 1)) \bigl( 1 + \BigTh\bigl( \frac{1}{\log(\log(n + 1))} \bigr) \bigr)&: \alpha = \beta = -1,\\
\frac{\log(n + 1)^{\beta + 1}}{\beta + 1} \bigl( 1 + \BigO\bigl( \frac{1}{\log(n + 1)^{\beta + 1}} \bigr) \bigr)&: \alpha = -1 \wedge \beta > -1,\\
\frac{n^{\alpha + 1}}{\alpha + 1} \log(n + 1)^\beta \bigl( 1 - \frac{\beta (1 + \smallO(1))}{(\alpha + 1) \log(n + 1)} \bigr) &: \alpha > -1 \wedge \beta \neq 0,\\
\frac{n^{\alpha + 1}}{\alpha + 1} \bigl( 1 + \BigTh\bigl( \frac{1}{n^{\alpha + 1}} \bigr) \bigr) &: \alpha \in (-1, 0) \wedge \beta = 0,\\
\frac{n^{\alpha + 1}}{\alpha + 1} \bigl( 1 + \BigTh\bigl( \frac{1}{n} \bigr) \bigr) &: \alpha \geq 0 \wedge \beta = 0.
\end{cases}
\end{equation*}
(Here \(\BigTh(b_n)\) is intended to denote \emph{sharp} asymptotics, that is, it should be read \(b_n (c + \smallO(1))\) with some \(c \neq 0\).)
\end{lemma}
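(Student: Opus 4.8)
The plan is to reduce each of the six regimes to the asymptotics of $\int_{1}^{n}f(x)\,\dx x$ with $f(x):=x^{\alpha}\log(x+1)^{\beta}$ (the shift $x\mapsto x+1$ merely keeps $f(1)$ positive and finite), and then to evaluate that integral by elementary calculus. The bridge between sum and integral is that $f$ is eventually monotone — its logarithmic derivative $\frac{\alpha}{x}+\frac{\beta}{(x+1)\log(x+1)}$ changes sign at most once on $(1,\infty)$ — so the crude estimate $\bigl|\sum_{i=1}^{n}f(i)-\int_{1}^{n}f\bigr|=\BigO\bigl(V_{1}^{n}(f)+|f(1)|+|f(n)|\bigr)$ (with $V_{1}^{n}$ the total variation) is at hand, and in the three regimes where a \emph{sharp} constant is claimed I would sharpen this to Euler--Maclaurin, $\sum_{i=1}^{n}f(i)=\int_{1}^{n}f+\tfrac12\bigl(f(1)+f(n)\bigr)+\tfrac1{12}\bigl(f'(n)-f'(1)\bigr)+\dotsb$, whose boundary terms either vanish in the limit or are of strictly smaller order than the announced correction.

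For the convergent regimes ($\alpha<-1$, or $\alpha=-1\wedge\beta<-1$) the partial sums increase, are bounded above by $\int_{1}^{\infty}f<\infty$ and below by $f(1)=\log(2)^{\beta}>0$, hence are $\BigTh(1)$. For $\alpha=-1$ we have $f(x)\asymp\log(x)^{\beta}/x\to0$, so the sum-to-integral discrepancy converges; replacing $\log(x+1)$ by $\log x$ costs only a convergent correction, and since an antiderivative of $\log(x)^{\beta}/x$ is $\tfrac1{\beta+1}\log(x)^{\beta+1}$ for $\beta\neq-1$ and $\log\log x$ for $\beta=-1$, one gets $\tfrac1{\beta+1}\log(n+1)^{\beta+1}+\BigO(1)$ when $\beta>-1$ and $\log\log(n+1)+\BigTh(1)$ when $\beta=-1$ — the constant in the latter being nonzero because, once $f$ is monotone, $\sum_{i}\bigl(f(i)-\int_i^{i+1}f\bigr)$ is a tail of terms of one definite sign plus a finite head.

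For $\alpha>-1\wedge\beta\neq0$ one integration by parts yields $\int_{c}^{n}x^{\alpha}\log(x)^{\beta}\,\dx x=\tfrac{n^{\alpha+1}}{\alpha+1}\log(n)^{\beta}-\tfrac{\beta}{\alpha+1}\int_{c}^{n}x^{\alpha}\log(x)^{\beta-1}\,\dx x$, and the remaining integral is $\BigTh\bigl(n^{\alpha+1}\log(n)^{\beta-1}\bigr)$, so relative to the leading term it contributes exactly the factor $1-\tfrac{\beta}{(\alpha+1)\log n}\bigl(1+\smallO(1)\bigr)$; the sum-to-integral error $\BigO\bigl(1+n^{\alpha}|\log n|^{\beta}\bigr)$ and the replacement of $\log n$ by $\log(n+1)$ are both of smaller relative order than this $\tfrac1{\log n}$ term. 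For $\beta=0$ the sum $\sum_{i=1}^{n}i^{\alpha}$ is classical: Euler--Maclaurin (equivalently the Hurwitz-zeta expansion) gives $\sum_{i=1}^{n}i^{\alpha}=\tfrac{n^{\alpha+1}}{\alpha+1}+\tfrac12 n^{\alpha}+\zeta(-\alpha)+\BigO(n^{\alpha-1})$; for $\alpha\in(-1,0)$ the constant $\zeta(-\alpha)$ dominates the correction and is nonzero because $\zeta(s)<0$ on $(0,1)$, giving relative order $\BigTh\bigl(n^{-(\alpha+1)}\bigr)$, whereas for $\alpha\ge0$ the term $\tfrac12 n^{\alpha}$ dominates, giving relative order $\BigTh(n^{-1})$.

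The mathematics here is light; the real work is the bookkeeping — choosing in each regime the right level of refinement (bare variation bound, one step of Euler--Maclaurin, or the full Hurwitz-zeta constant) so that the error is provably of smaller order than the main term, and, in the three regimes asserting sharp $\BigTh$-asymptotics, verifying the relevant constant does not vanish ($\zeta(-\alpha)$ for $\alpha\in(-1,0)$; the Euler--Maclaurin constant for $\alpha=\beta=-1$; $\tfrac12 n^{\alpha}$ against its competitors for $\alpha>0$). One caveat worth flagging: for $\alpha=\beta=0$ the correction is identically zero, so there the claimed $\BigTh(\tfrac1n)$ must be read as $\BigO(\tfrac1n)$.
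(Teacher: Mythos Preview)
Your approach coincides with the paper's: pass from the sum to $\int_1^n f$ via Euler--Maclaurin, then evaluate the integral by elementary calculus (partial integration for $\alpha>-1$, the substitution $u=\log(x+1)$ for $\alpha=-1$). The one genuine difference is the case $\beta=0$: the paper simply cites McGown and Parks for the sharp asymptotics of $\sum_{i=1}^n i^\alpha$, whereas you supply a self-contained argument via $\sum_{i\le n} i^\alpha = \tfrac{n^{\alpha+1}}{\alpha+1} + \tfrac12 n^\alpha + \zeta(-\alpha) + \BigO(n^{\alpha-1})$ together with $\zeta(s)<0$ on $(0,1)$; this is more informative than what the paper does.

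Two small points. Your caveat about $\alpha=\beta=0$ is correct and worth flagging: $\sum_{i=1}^n 1 = n$ exactly, so the stated $\BigTh(1/n)$ is degenerate there (the paper does not mention this). And your argument that the constant $c$ is nonzero for $\alpha=\beta=-1$ is not quite closed: the sign of $\sum_i\bigl(f(i)-\int_i^{i+1}f\bigr)$ controls only part of $c=\lim\bigl(\sum_{i\le n} f(i)-\log\log(n+1)\bigr)$, since one still has to account for the constant in $\int_1^{n+1}f = \log\log(n+1)+C'+\smallO(1)$, and the ``finite head plus definite-sign tail'' remark does not rule out cancellation against $C'$. The paper closes this directly via the monotonicity bound
\[
\sum_{i=1}^n f(i)\;\ge\;\int_1^{n+1}\frac{\dx x}{(x+1)\log(x+1)}\;=\;\log\log(n+2)-\log\log 2,
\]
which yields $c\ge-\log\log 2>0$ at once.
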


\begin{proof}
\textit{Case~\(\alpha < -1\):} This is clear because the series converges.

For the remaining (mostly diverging) cases we recall the Euler\--Maclaurin formula: for any differentiable function \(f \colon [1, \infty) \to \R\) with \(f'\) Riemann\-/integrable over compact intervals, and for any \(n \in \N\),
\begin{equation}\label{eq:eulermaclaurin}
\sum_{k = 1}^n f(k) = \int_1^n f(x) \, \dx x + \frac{f(1) + f(n)}{2} + \int_1^n H(x) f'(x) \, \dx x,
\end{equation}
where \(H \colon \R \to \R\) is defined by \(H(x) := x - \lfloor x \rfloor - \frac{1}{2}\); note \(\lvert H(x) \rvert \leq \frac{1}{2}\) for any \(x \in \R\). In the sequel we take \(f(x) := x^\alpha \log(x + 1)^\beta\). Let \(n \in \N\).

\textit{Case \(\alpha = -1 \wedge \beta < -1\):} Observe \(\frac{1}{x} = \frac{1}{x + 1} + \frac{1}{x (x + 1)}\), and the latter terms satisfies \(\frac{1}{2 x^2} \leq \frac{1}{x (x + 1)} \leq \frac{1}{x^2}\) for all \(x \geq 1\). This yields
\begin{equation}\label{eq:alpha-1betan-1}
\int_1^n \frac{\log(x + 1)^\beta}{x} \, \dx x = \frac{\log(n + 1)^{\beta + 1} - \log(2)^{\beta + 1}}{\beta + 1} + \int_1^n \frac{\log(x + 1)^\beta}{x (x + 1)} \, \dx x,
\end{equation}
where the second integral converges as \(n \to \infty\). Next, \(f\) is decreasing, therefore the middle term in~\eqref{eq:eulermaclaurin} converges; and \(f' \leq 0\), therefore
\begin{equation*}
\biggl\lvert \int_1^n H(x) f'(x) \, \dx x \biggr\rvert \leq -\frac{1}{2} \int_1^n f'(x) \, \dx x = \frac{f(1) - f(n)}{2},
\end{equation*}
wherefore also the third term converges. Because of \(\beta + 1 < 0\) the right\-/hand side of~\eqref{eq:eulermaclaurin} converges, and thus so does the left\-/hand side.

\textit{Case \(\alpha = \beta = -1\):} The techniques are the same as before, but now
\begin{equation*}
\int_1^n \frac{1}{x \log(x + 1)} \, \dx x = \log(\log(n + 1)) - \log(\log(2)) + \int_1^n \frac{1}{x (x + 1) \log(x + 1)} \, \dx x,
\end{equation*}
where the second integral converges still, so \(\log(\log(n + 1))\) dominates. The second and third terms of~\eqref{eq:eulermaclaurin} also converge, and this results in
\begin{equation*}
\sum_{i = 1}^n \frac{1}{i \log(i + 1)} = \log(\log(n + 1)) + c + \smallO(1),
\end{equation*}
with a certain \(c \in \R\); to be precise, \(c > 0\) holds true for the following argument, exploiting monotonicity throughout,
\begin{align*}
\sum_{i = 1}^n \frac{1}{i \log(i + 1)} &\geq \int_1^{n + 1} \frac{1}{x \log(x + 1)} \, \dx x \geq \int_1^{n + 1} \frac{1}{(x + 1) \log(x + 1)} \, \dx x\\
&= \log(\log(n + 2)) - \log(\log(2)) \geq \log(\log(n + 1)) - \log(\log(2)),
\end{align*}
hence \(c = \lim_{n \to \infty} \bigl( \sum_{i = 1}^n \frac{1}{i \log(i + 1)} - \log(\log(n + 1)) \bigr) \geq -\log(\log(2)) > 0\). This implies the claim.

\textit{Case \(\alpha = -1 \wedge \beta > -1\):} The second and third terms of~\eqref{eq:eulermaclaurin} still converge; the first term, that is, the integral, is the same as in~\eqref{eq:alpha-1betan-1}, where again the second integral converges, but now \(\log(n + 1)^{\beta + 1}\) is unbounded and therefore dominates the scene, hence
\begin{equation*}
\sum_{i = 1}^n \frac{\log(i + 1)^\beta}{i} = \frac{\log(n + 1)^{\beta + 1}}{\beta + 1} + c + \smallO(1),
\end{equation*}
again with a certain \(c \in \R\) (whose value in this case cannot be estimated so easily), and this case is accounted for. (That much may be said about \(c\): For \(\beta \leq \log(4)\), \(f\) is decreasing, and then \(c \geq \int_1^\infty \frac{\log(x + 1)^{\beta + 1}}{(\beta + 1) x^2} \, \dx x > 0\) can be proved. For \(\beta > \log(4)\), \(f\) is only eventually decreasing.)

\textit{Case \(\alpha > -1 \wedge \beta \neq 0\):} Depending on the values of \(\alpha\) and \(\beta\) the second and third terms of~\eqref{eq:eulermaclaurin} either both remain bounded or both are of order \(\BigO(n^\alpha \log(n + 1)^\beta)\). The first term is tackled with partial integration as follows,
\begin{equation*}
\int_1^n x^\alpha \log(x + 1)^\beta \, \dx x = \frac{n^{\alpha + 1} \log(n + 1)^\beta - \log(2)^\beta}{\alpha + 1} - \frac{\beta}{\alpha + 1} \int_1^n \frac{x^{\alpha + 1}}{x + 1} \log(x + 1)^{\beta - 1} \, \dx x.
\end{equation*}
Now note \(\frac{1}{2} \leq \frac{x}{x + 1} \leq 1\), therefore the last integrand behaves like \(x^\alpha \log(x + 1)^{\beta - 1}\). Using L'Hospital's rule we recognize
\begin{equation}\label{eq:int_potenz_log}
\int_1^n x^\alpha \log(x + 1)^\beta \, \dx x = \frac{x^{\alpha + 1} \log(x + 1)^\beta}{\alpha + 1} \Bigl( 1 - \frac{\beta (1 + \smallO(1))}{(\alpha + 1) \log(n + 1)} \Bigr);
\end{equation}
therefore by now we have
\begin{align*}
\sum_{i = 1}^n i^\alpha \log(i + 1)^\beta &= \frac{x^{\alpha + 1} \log(x + 1)^\beta}{\alpha + 1} \Bigl( 1 - \frac{\beta (1 + \smallO(1))}{(\alpha + 1) \log(n + 1)} \Bigr)\\
&\quad + \frac{n^\alpha \log(n + 1)^\beta + \log(2)^\beta}{2} + \int_1^n H(x) f'(x) \, \dx x.
\end{align*}
In the case \(\alpha < 0\) or \(\alpha = 0 \wedge \beta < 0\), \(f\) is eventually decreasing and converging to zero; hence, with \(1 \leq M \leq n\) sufficiently large, we have
\begin{equation*}
\int_M^n \bigl\lvert H(x) f'(x) \bigr\rvert \, \dx x \leq -\frac{1}{2} \int_M^n f'(x) \, \dx x = \frac{f(M) - f(n)}{2} \leq \frac{f(M)}{2},
\end{equation*}
so \(\int_1^\infty H(x) f'(x) \, \dx x\) converges absolutely, and we have, again for \(n\) sufficiently large,
\begin{equation*}
\biggl\lvert \int_1^\infty H(x) f'(x) \, \dx x - \int_1^n H(x) f'(x) \, \dx x \biggr\rvert \leq \int \int_n^\infty \bigl\lvert H(x) f'(x) \bigr\rvert \, \dx x \leq \frac{f(n)}{2},
\end{equation*}
that is, \(\int_1^n H(x) f'(x) \, \dx x = c + \BigO(f(n))\) with some constant \(c \in \R\). In the case \(\alpha = 0 \wedge \beta > 0\) or \(\alpha > 0\), \(f\) is eventually incrasing and unbounded; so let again \(1 \leq M \leq n\) sufficiently large, then
\begin{equation*}
\biggl\lvert \int_1^n H(x) f'(x) \, \dx x \biggr\rvert \leq \biggl\lvert \int_1^M H(x) f'(x) \, \dx x \biggr\rvert + \frac{f(n) - f(M)}{2},
\end{equation*}
and therefore \(\int_1^n H(x) f'(x) \, \dx x = \BigO(f(n))\). So in either case we get, with some suitable constant \(c \in \R\),
\begin{align*}
\sum_{i = 1}^n i^\alpha \log(i + 1)^\beta &= \frac{n^{\alpha + 1} \log(n + 1)^\beta}{\alpha + 1} \Bigl( 1 - \frac{\beta (1 + \smallO(1))}{(\alpha + 1) \log(n + 1)} \Bigr) + c + \BigO(n^\alpha \log(n + 1)^\beta)\\
&= \frac{n^{\alpha + 1} \log(n + 1)^\beta}{\alpha + 1} \biggl( 1 - \frac{\beta (1 + \smallO(1))}{(\alpha + 1) \log(n + 1)} + \frac{c (\alpha + 1)}{n^{\alpha + 1} \log(n + 1)^\beta} + \BigO\Bigl( \frac{1}{n} \Bigr) \biggr).
\end{align*}
Therefore the error term is dominated by \(\frac{-\beta (1 + \smallO(1))}{(\alpha + 1) \log(n + 1)}\).

\textit{Case \(\alpha > -1 \wedge \beta = 0\):} This is a consequence of McGown and Parks~\cite{McGP2007} who provide the exact asymptotics for diverging sums of non\-/integral powers. The proof is complete.
\end{proof}

The next lemma extends upon the statement for the case \(\alpha = 0 \wedge \beta \neq 0\). Obviously it could be done for any \(\alpha > -1\); we restrict ourselves to said case solely because it is the only one we need, and we do not wish to overburden the present article.

\begin{lemma}\label{lem:alpha0betan0}
Let \(\beta \in \R \setminus \{0\}\) and let \(N \in \N\), then the following holds true, as \(n \to \infty\),
\begin{equation*}
\sum_{i = 1}^n \log(i + 1)^\beta = n \log(n + 1)^\beta \biggl( \sum_{k = 0}^{N - 1} (-1)^k \frac{(\beta)_k}{\log(n + 1)^k} + (-1)^N \frac{(\beta)_N (1 + \smallO(1))}{\log(n + 1)^N} \biggr),
\end{equation*}
where \((\beta)_k := \prod_{l = 0}^{k - 1} (\beta - l)\) is the falling factorial.
\end{lemma}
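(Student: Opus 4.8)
The plan is to pass from the sum to the integral \(J_\gamma(n) := \int_1^n \log(x+1)^\gamma \, \dx x\) (well defined for every \(\gamma \in \R\), since \(\log(x+1) \geq \log 2 > 0\) on \([1, \infty)\)) by means of the Euler--Maclaurin formula~\eqref{eq:eulermaclaurin}, then to obtain an asymptotic expansion of \(J_\beta(n)\) by iterated integration by parts, and finally to reassemble the pieces. Writing \(f(x) := \log(x+1)^\beta\), one has \(f'(x) = \beta \log(x+1)^{\beta-1}/(x+1)\), and~\eqref{eq:eulermaclaurin} gives \(\sum_{i=1}^n f(i) = J_\beta(n) + \tfrac12(f(1) + f(n)) + \int_1^n H(x) f'(x) \, \dx x\), with \(H\) as in~\eqref{eq:eulermaclaurin}. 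The middle summand is \(\BigO(\log(n+1)^{\max\{\beta, 0\}})\); for the last one, \(\lvert H \rvert \leq \tfrac12\) together with the substitution \(u = \log(x+1)\) (this is where \(\beta \neq 0\) enters) yields \(\bigl\lvert \int_1^n H f' \bigr\rvert \leq \tfrac12 \int_1^n \lvert f' \rvert \, \dx x = \tfrac12 \bigl\lvert \log(n+1)^\beta - \log(2)^\beta \bigr\rvert = \BigO(\log(n+1)^{\max\{\beta, 0\}})\) as well. So it remains to expand \(J_\beta(n)\).

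A single integration by parts, taking \(x+1\) as antiderivative of \(1\), yields the exact recursion \(J_\gamma(n) = (n+1) \log(n+1)^\gamma - \gamma \, J_{\gamma-1}(n) - 2 \log(2)^\gamma\), valid for every \(\gamma \in \R\). Iterating it \(N\) times gives
\[
J_\beta(n) = (n+1) \sum_{k=0}^{N-1} (-1)^k (\beta)_k \log(n+1)^{\beta-k} + (-1)^N (\beta)_N \, J_{\beta-N}(n) + \BigO(1),
\]
where the \(\BigO(1)\) collects the finitely many constants \(-2 \log(2)^{\beta-j}\) weighted by the (constant) falling factorials \((\beta)_j\). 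Finally, one application of L'Hospital's rule to the quotient \(J_{\beta-N}(n) / \bigl[ (n+1) \log(n+1)^{\beta-N} \bigr]\) (using \(\tfrac{\dx}{\dx n} J_\delta(n) = \log(n+1)^\delta\)) shows \(J_{\beta-N}(n) = (n+1) \log(n+1)^{\beta-N}(1 + \smallO(1))\), which remains correct when \(\beta - N = 0\), where \(J_0(n) = n - 1\).

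To conclude, substitute this estimate into the displayed expansion, replace each \(n+1\) by \(n\) at the cost of a further \(\BigO(\log(n+1)^{\max\{\beta,0\}})\), factor out \(n \log(n+1)^\beta\), and divide through; the asserted expansion then falls out provided the total accumulated error \(\BigO(\log(n+1)^{\max\{\beta,0\}})\) is \(\smallO(n \log(n+1)^{\beta-N})\), equivalently \(\log(n+1)^N = \smallO(n)\) and \(\log(n+1)^{N-\beta} = \smallO(n)\), which is elementary. This bookkeeping---making sure no error term outruns the last retained term \(n \log(n+1)^{\beta-N}\)---is the only point requiring care. The single genuine nuisance is the case \(\beta \in \N\) with \(N > \beta\): there \((\beta)_k = 0\) for \(k > \beta\), so the retained sum truncates at \(k = \beta\) and the final term of the claimed expansion vanishes; it should then be read simply as the error term \(\smallO(n \log(n+1)^{\beta-N})\) that the argument still produces, so nothing is actually lost.
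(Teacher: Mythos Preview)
Your proof is correct and follows essentially the same route as the paper's: Euler--Maclaurin to pass to the integral, iterated integration by parts (with antiderivative \(x+1\)) to unwind \(J_\beta\), and a L'Hospital step for the tail integral \(J_{\beta-N}\), followed by the same bookkeeping to absorb the \(\BigO(\log(n+1)^{\max\{\beta,0\}})\) corrections. The only notable difference is cosmetic---you state the one-step recursion \(J_\gamma = (n+1)\log(n+1)^\gamma - \gamma J_{\gamma-1} - 2\log 2^\gamma\) explicitly and invoke L'Hospital directly, whereas the paper cites its earlier estimate~\eqref{eq:int_potenz_log}---and you additionally flag the degenerate case \(\beta \in \{1,\dotsc,N-1\}\), where \((\beta)_N = 0\) and the stated last term must be read as a bare \(\smallO\) error; the paper does not comment on this.
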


\begin{proof}
As before we use the Euler\--Maclaurin formula,
\begin{equation*}
\sum_{i = 1}^n \log(i + 1)^\beta = \int_1^n \log(x + 1)^\beta \, \dx x + \frac{\log(n + 1)^\beta + \log(2)^\beta}{2} + \int_1^n H(x) \frac{\beta \log(x + 1)^{\beta - 1}}{x + 1} \, \dx x,
\end{equation*}
where \(H\) is the same as in the proof of Lemma~\ref{lem:sum_powers}. We see immediately that the last integral behaves like \(c + \BigO(\log(n + 1)^\beta)\), that is, it is comparabale to the middle term. It remains to evaluate the first integral; repeated partial integration reveals
\begin{equation*}
\int_1^n \log(x + 1)^\beta \, \dx x = \sum_{k = 0}^{N - 1} (-1)^k (\beta)_k (n + 1) \log(n + 1)^{\beta - k} + c + (-1)^N (\beta)_N \int_1^n \log(x + 1)^{\beta - N} \, \dx x,
\end{equation*}
where we have gathered all constants in the symbol \(c\). From~\eqref{eq:int_potenz_log} we know already that
\begin{equation*}
\int_1^n \log(x + 1)^{\beta - N} = n \log(n + 1)^{\beta - N} (1 + \smallO(1)),
\end{equation*}
and therewith we continue
\begin{align*}
\int_1^n \log(x + 1)^\beta \, \dx x &= (n + 1) \log(n + 1) \biggl( \sum_{k = 0}^{N - 1} (-1)^k \frac{(\beta)_k}{\log(n + 1)^k}\\
&\quad + \frac{c}{(n + 1) \log(n + 1)^\beta} + (-1)^N \frac{n (\beta)_N (1 + \smallO(1))}{(n + 1) \log(n + 1)^N} \biggr),
\end{align*}
and since \(\frac{n}{n + 1} = 1 - \frac{1}{n + 1}\) we can absorb this term into the already existing \(1 + \smallO(1)\); next, the term \(\frac{c}{(n + 1) \log(n + 1)^\beta}\) is negligible compared to the other, purely logarithmic terms; also \(n + 1 = n (1 + \frac{1}{n})\), and as the asymptotic part is of order \(1 - \frac{\beta (1 + \smallO(1))}{\log(n + 1)}\), we can safely replace the leading factor \(n + 1\) by \(n\). In total we get
\begin{align*}
\sum_{i = 1}^n \log(i + 1)^\beta &= n \log(n + 1) \biggl( \sum_{k = 0}^{N - 1} (-1)^k \frac{(\beta)_k}{\log(n + 1)^k} + (-1)^N \frac{(\beta)_N (1 + \smallO(1))}{\log(n + 1)^N}\\
&\mspace{130mu} + \frac{c}{n \log(n + 1)^\beta} + \BigO\Bigl( \frac{1}{n} \Bigr) \biggr),
\end{align*}
and the last two terms are clearly immaterial.
\end{proof}

\begin{lemma}\label{lem:prod_log}
The following holds true as \(n \to \infty\),
\begin{equation*}
\biggl( \prod_{i = 1}^n \log(i + 1) \biggr)^{1/n} = \log(n + 1) \Bigl( 1 - \frac{1}{\log(n + 1)} - \frac{1}{2 \log(n + 1)^2} - \frac{7}{6 \log(n + 1)^3} - \frac{95 + \smallO(1)}{24 \log(n + 1)^4} \Bigr).
\end{equation*}
\end{lemma}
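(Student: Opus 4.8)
The plan is to take logarithms, turning the product into an arithmetic mean: writing $L:=\log(n+1)$, we have
\[
\biggl( \prod_{i=1}^n \log(i+1) \biggr)^{1/n} = \exp\biggl( \frac{1}{n} \sum_{i=1}^n \log\log(i+1) \biggr),
\]
so it suffices to expand $\frac1n\sum_{i=1}^n\log\log(i+1)$ with error $\smallO(L^{-4})$ and then exponentiate. For the sum I would invoke the Euler--Maclaurin formula~\eqref{eq:eulermaclaurin} with $f(x):=\log\log(x+1)$, which is increasing with $f'(x)=\frac{1}{(x+1)\log(x+1)}$. Hence the boundary term $\frac{f(1)+f(n)}{2}$ and the correction $\int_1^nH(x)f'(x)\,\dx x$ are both $\BigO(\log\log n)$ --- for the latter use $\lvert H\rvert\le\frac12$ and $\int_1^nf'(x)\,\dx x=f(n)-f(1)$ --- and after dividing by $n$ they contribute only $\BigO\bigl(\tfrac{\log\log n}{n}\bigr)=\smallO(L^{-5})$, hence are negligible.

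The substance is the integral $\int_1^n\log\log(x+1)\,\dx x=\int_2^{n+1}\log\log u\,\dx u$. Integrating by parts once gives $(n+1)\log L-2\log\log2-\int_2^{n+1}\frac{\dx u}{\log u}$, and the last integral I would handle by iterating the identity $\int_2^m(\log u)^{-k}\,\dx u=\frac{m}{(\log m)^k}+c_k+k\int_2^m(\log u)^{-(k+1)}\,\dx u$ (with $c_k$ a constant), together with the crude bound $\int_2^m(\log u)^{-k}\,\dx u=\BigO\bigl(m(\log m)^{-k}\bigr)$ obtained by splitting the range at $\sqrt m$. Carried through $k=4$ this yields
\[
\int_2^{n+1}\log\log u\,\dx u=(n+1)\log L-(n+1)\Bigl(\tfrac1L+\tfrac1{L^2}+\tfrac2{L^3}+\tfrac6{L^4}\Bigr)+\BigO\bigl(nL^{-5}\bigr)+\BigO(1),
\]
the coefficients $1,1,2,6$ being the factorials $0!,1!,2!,3!$. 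Dividing by $n$ and absorbing the $\frac{n+1}{n}$-corrections, the additive constant, and the boundary terms (each $\smallO(L^{-5})$ after the division) gives $\frac1n\sum_{i=1}^n\log\log(i+1)=\log L-\tfrac1L-\tfrac1{L^2}-\tfrac2{L^3}-\tfrac6{L^4}+\BigO(L^{-5})$.

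It remains to exponentiate. With $u:=-\tfrac1L-\tfrac1{L^2}-\tfrac2{L^3}-\tfrac6{L^4}$ and using $\ee^{\BigO(L^{-5})}=1+\BigO(L^{-5})$,
\[
\biggl( \prod_{i=1}^n \log(i+1) \biggr)^{1/n}=L\,\ee^{u}\bigl(1+\BigO(L^{-5})\bigr)=L\Bigl(1+u+\tfrac{u^2}{2}+\tfrac{u^3}{6}+\tfrac{u^4}{24}+\BigO(L^{-5})\Bigr),
\]
and expanding $u^2,u^3,u^4$ in powers of $1/L$ and collecting, the coefficient of $L^{-1}$ is $-1$, of $L^{-2}$ it is $-1+\tfrac12=-\tfrac12$, of $L^{-3}$ it is $-2+1-\tfrac16=-\tfrac76$, and of $L^{-4}$ it is $-6+\tfrac52-\tfrac12+\tfrac1{24}=-\tfrac{95}{24}$. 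Writing the $\BigO(L^{-5})$ tail as $\frac{\smallO(1)}{L^4}$ gives exactly the claimed expansion.

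No step is conceptually hard; the difficulty is purely bookkeeping --- iterating the $\int(\log u)^{-k}$ recursion far enough while keeping an honest $\BigO(nL^{-5})$ remainder, checking that the various $\BigO(1)$ constants and $\frac{n+1}{n}$ factors genuinely drop below the $L^{-4}$ threshold after division by $n$, and assembling the fourth-order Taylor coefficients of the exponential without error, the value $-\tfrac{95}{24}$ being the most delicate to get right.
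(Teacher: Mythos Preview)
Your proposal is correct and follows essentially the same route as the paper: Euler--Maclaurin for $\frac{1}{n}\sum\log\log(i+1)$, repeated integration by parts on $\int\log\log(x+1)\,\dx x$ to produce the coefficients $0!,1!,2!,3!$, and then a fourth-order Taylor expansion of the exponential to recover $-1,-\tfrac12,-\tfrac76,-\tfrac{95}{24}$. The only cosmetic differences are that the paper stops the integration-by-parts recursion one step earlier and invokes the already-proved asymptotic~\eqref{eq:int_potenz_log} for $\int(\log(x+1))^{-3}\,\dx x$, whereas you iterate one more time and justify the $\BigO(nL^{-5})$ tail directly via the $\sqrt{m}$-splitting bound; your remainder is accordingly written as $\BigO(L^{-5})$ rather than the paper's equivalent $\smallO(1)/L^4$.
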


\begin{proof}
First we take the logarithm, so we can work with \(\frac{1}{n} \sum_{i = 1}^n \log(\log(i + 1))\). Then we are going to employ the Euler\--Maclaurin formula,
\begin{align*}
\sum_{i = 1}^n \log(\log(i + 1)) &= \int_1^n \log(\log(x + 1)) \, \dx x\\
&\quad + \frac{\log(\log(n + 1)) + \log(\log(2))}{2} + \int_1^n \frac{H(x)}{(x + 1) \log(x + 1)} \, \dx x,
\end{align*}
where \(H\) is the same as in the proof of Lemma~\ref{lem:sum_powers}. By repeated partial integration, the first integral can be seen to be
\begin{align*}
\int_1^n \log(\log(x + 1)) \, \dx x &= (n + 1) \log(\log(n + 1)) - 2 \log(\log(2))\\
&\quad - \frac{n + 1}{\log(n + 1)} + \frac{2}{\log(2)} - \frac{n + 1}{\log(n + 1)^2} + \frac{2}{\log(2)^2}\\
&\quad - 2 \int_1^n \frac{1}{\log(x + 1)^3} \, \dx x,
\end{align*}
and from Equation~\eqref{eq:int_potenz_log} we already know
\begin{equation*}
\int_1^n \frac{1}{\log(x + 1)^3} \, \dx x = \frac{n}{\log(n + 1)^3} \Bigl( 1 + \frac{3 + \smallO(1)}{\log(n + 1)} \Bigr).
\end{equation*}
The usual estimate also yields
\begin{equation*}
\biggl\lvert \int_1^n \frac{H(x)}{(x + 1) \log(x + 1)} \, \dx x \biggr\rvert \leq \frac{\log(\log(n + 1)) - \log(\log(2))}{2}.
\end{equation*}
Putting things together shows us
\begin{align*}
\frac{1}{n} \sum_{i = 1}^n \log(\log(n + 1)) &= \log(\log(n + 1)) - \frac{1}{\log(n + 1)} - \frac{1}{\log(n + 1)^2}\\
&\quad - \frac{2}{\log(n + 1)^3} - \frac{6 + \smallO(1)}{\log(n + 1)^4} + \BigO\Bigl( \frac{\log(\log(n + 1))}{n} \Bigr),
\end{align*}
and obviously the last big\-/O term even may be neglected. Taking the exponential again on both sides and using the Taylor series of the exponential function on the right\-/hand side leads to the claimed result.
\end{proof}

\begin{proof}[Proof of Proposition~\ref{prop:spec_sigma}]
(a)~Using division with remainder, write \(n - n_0 = m_n d + r_n\) with \(m_n, r_n \in \N_0\) and \(r_n \leq d - 1\), for each \(n \in \N\) with \(n \geq n_0 + 1\). Then \(\lim_{n \to \infty} m_n = \infty\) and \(\lim_{n \to \infty} \frac{n}{m_n} = d\) hold. These imply
\begin{equation*}
\biggl( \prod_{i = 1}^n \sigma_i \biggr)^{1/n} = \biggl( \prod_{i = 1}^{n_0} \sigma_i \biggr)^{1/n} \biggl( \prod_{i = 1}^{d} \sigma_{n_0 + i} \biggr)^{m_n/n} \biggl( \prod_{i = 1}^{r_n} \sigma_{n_0 + i} \biggr)^{1/n} \xrightarrow[n \to \infty]{} \biggl( \prod_{i = 1}^{d} \sigma_{n_0 + i} \biggr)^{1/d}
\end{equation*}
and analogously
\begin{equation*}
\frac{1}{n} \sum_{i = 1}^n \sigma_i^{\beta} = \frac{1}{n} \sum_{i = 1}^{n_0} \sigma_i^\beta + \frac{m_n}{n} \sum_{i = 1}^{d} \sigma_{n_0 + i}^\beta + \frac{1}{n} \sum_{i = 1}^{r_n} \sigma_{n_0 + i}^\beta \xrightarrow[n \to \infty]{} \frac{1}{d} \sum_{i = 1}^d \sigma_{n_0 + i}^\beta
\end{equation*}
for any \(\beta \in \R\). From these results it follows that
\begin{equation*}
F_{q, \bsigma} = \biggl( \prod_{i = 1}^{d} \sigma_{n_0 + i} \biggr)^{1/d} \biggl( \frac{1}{d} \sum_{i = 1}^{d} \sigma_{n_0 + i}^{-q} \biggr)^{1/q},
\end{equation*}
as well as (referring to Remark~\ref{rmk:gqsigma})
\begin{equation*}
G_{q, \bsigma} = \frac{\sum_{i = 1}^{d} \sigma_{n_0 + i}^{-q}}{\sqrt{d} \bigl( \sum_{i = 1}^{d} \sigma_{n_0 + i}^{-2 q} \bigr)^{1/2}},
\end{equation*}
and (w.l.o.g.\ \(n \geq n_0 + d\))
\begin{equation*}
\frac{\max\limits_{1 \leq i \leq n} \sigma_i^{-2 q}}{\sum_{i = 1}^n \sigma_i^{-2 q}} = \frac{\max\limits_{1 \leq i \leq n} \sigma_{n_0 + i}^{-2 q}}{\sum_{i = 1}^{n_0} \sigma_i^{-2 q} + m_n \sum_{i = 1}^{d} \sigma_{n_0 + i}^{-2 q} + \sum_{i = 1}^{r_n} \sigma_{n_0 + i}^{-2 q}} \xrightarrow[n \to \infty]{} 0.
\end{equation*}
Recall the definition of \(h_n\) and \(z\) in Corollary~\ref{cor:threshold_theorem_crit}; concerning \(z\), we obtain
\begin{align*}
\frac{\sum_{i = 1}^n \sigma_i^{-q}}{\bigl( \sum_{i = 1}^n \sigma_i^{-2 q} \bigr)^{1/2}} (h_n - 1) &= \sqrt{n} \, \frac{\frac{1}{n} \sum_{i = 1}^{n_0} \sigma_i^{-q} + \frac{m_n}{n} \sum_{i = 1}^d \sigma_{n_0 + i}^{-q} + \frac{1}{n} \sum_{i = 1}^{r_n} \sigma_{n_0 + i}^{-q}}{\bigl( \frac{1}{n} \sum_{i = 1}^{n_0} \sigma_i^{-q} + \frac{m_n}{n} \sum_{i = 1}^d \sigma_{n_0 + i}^{-2 q} + \frac{1}{n} \sum_{i = 1}^{r_n} \sigma_{n_0 + i}^{-2 q} \bigr)^{1/2}} \cdot {}\\
&\qquad \cdot \biggl( \frac{\bigl( \prod_{i = 1}^n \sigma_i \bigr)^{1/n} \bigl( \frac{1}{n} \sum_{i = 1}^n \sigma_i^{-q} \bigr)^{1/q}}{\bigl( \prod_{i = 1}^d \sigma_i \bigr)^{1/d} \bigl( \frac{1}{d} \sum_{i = 1}^d \sigma_i^{-q} \bigr)^{1/q}} - 1 \biggr)\\
&= \sqrt{n} \, \frac{\frac{m_n}{n} \sum_{i = 1}^d \sigma_{n_0 + i}^{-q} + \BigO\bigl( \frac{1}{n} \bigr)}{\bigl( \frac{m_n}{n} \sum_{i = 1}^d \sigma_{n_0 + i}^{-2 q} + \BigO\bigl( \frac{1}{n} \bigr) \bigr)^{1/2}} \cdot {}\\
&\qquad \cdot \Biggl( \biggl( \prod_{i = 1}^{n_0} \sigma_i \cdot \biggl( \prod_{i = 1}^d \sigma_{n_0 + i} \biggr)^{-(n_0 + r_n)/d} \cdot \prod_{i = 1}^{r_n} \sigma_{n_0 + i} \biggr)^{1/n} \cdot {}\\
&\qquad \cdot \biggl( \frac{n - n_0 - r_n}{n} + \frac{d}{n} \frac{\sum_{i = 1}^{n_0} \sigma_i^{-q} + \sum_{i = 1}^{r_n} \sigma_{n_0 + i}^{-q}}{\sum_{i = 1}^d \sigma_{n_0 + i}^{-q}} \biggr)^{1/q} - 1 \Biggr)\\
&= \sqrt{n} \, \BigTh(1) \biggl( \Bigl( 1 + \BigO\Bigl( \frac{1}{n} \Bigr) \Bigr) \Bigl( 1 + \BigO\Bigl( \frac{1}{n} \Bigr) \Bigr)^{1/q} - 1 \biggr)\\
&= \sqrt{n} \BigO\Bigl( \frac{1}{n} \Bigr) \xrightarrow[n \to \infty]{} 0.
\end{align*}

(b)~Throughout we have, by Lemmas~\ref{lem:factorial_power} and~\ref{lem:prod_log},
\begin{align*}
\biggl( \prod_{i = 1}^n \sigma_i \biggr)^{1/n} &= \biggl( \prod_{i = 1}^n \bigl( i^\alpha \log(i + 1)^\beta \bigr) \biggr)^{1/n} = n!^{\alpha/n} \log(n + 1)^\beta \Bigl( 1 - \frac{1 + \smallO(1)}{\log(n + 1)} \Bigr)^\beta\\
&= \begin{cases}
\ee^{-\alpha} n^\alpha \bigl( 1 + \frac{\alpha \log(n)}{2 n} (1 + \smallO(1)) \bigr) & \text{if } \beta = 0,\\
\ee^{-\alpha} n^\alpha \log(n + 1)^\beta \bigl( 1 - \frac{\beta}{\log(n + 1)} (1 + \smallO(1)) \bigr) & \text{if } \beta \neq 0.
\end{cases}
\end{align*}
First we are going to check whether the conditions of Theorem~\ref{thm:threshold_theorem} are met.

\textit{Case \(\alpha > \frac{1}{q}\) or \(\alpha = \frac{1}{q} \wedge \beta \geq 0\):} The condition~\eqref{eq:sigma} is violated since
\begin{equation}\label{eq:v_n}
\biggl( \prod_{i = 1}^n \sigma_i \biggr)^{2 q/n} \frac{1}{n^2} \sum_{i = 1}^n \sigma_i^{-2 q} = \frac{n^{2 q \alpha - 2} \log(n + 1)^{2 q \beta}}{\ee^{2 q \alpha}} \BigTh(1),
\end{equation}
which does not converge to zero as \(n \to \infty\).

\textit{Case \(\alpha = \frac{1}{q} \wedge \beta < 0\):} Looking back to~\eqref{eq:v_n}, now~\eqref{eq:sigma} is satisfied, and
\begin{equation*}
\biggl( \prod_{i = 1}^n \sigma_i \biggr)^{q/n} \frac{1}{n} \sum_{i = 1}^n \sigma_i^{-q} = \frac{\log(n + 1)}{\ee (1 - q \beta)} (1 + \smallO(1)),
\end{equation*}
hence \(F_{q, \bsigma} = \infty\).

\textit{Case \(\alpha < \frac{1}{q}\):} From Lemma~\ref{lem:sum_powers} note that
\begin{equation*}
\sum_{i = 1}^n \sigma_i^{-2 q} = \BigO\bigl( (1 + n^{-2 q \alpha + 1}) (1 + \log(n + 1)^{-2 q \beta + 1}) \bigr),
\end{equation*}
so even if the series diverges, it does so of order at most \(n^{-2 q \alpha + 1} \log(n + 1)^{-2 q \beta + 1}\), hence~\eqref{eq:sigma} is fulfilled, and
\begin{align*}
\biggl( \prod_{i = 1}^n \sigma_i \biggr)^{q/n} \frac{1}{n} \sum_{i = 1}^n \sigma_i^{-q} &= \frac{n^{q \alpha - 1} \log(n  + 1)^{q \beta}}{\ee^{q \alpha}} (1 + \smallO(1)) \frac{n^{-q \alpha + 1} \log(n + 1)^{-q \beta}}{-q \alpha + 1} (1 + \smallO(1))\\
&= \frac{1}{\ee^{q \alpha} (1 - q \alpha)} (1 + \smallO(1)),
\end{align*}
which implies the stated value of \(F_{q, \bsigma}\).

\noindent
Now we are going to investigate the premises of Theorem~\ref{thm:clt_ellipsoid} and Corollary~\ref{cor:threshold_theorem_crit}.

\textit{Case \(\alpha > \frac{1}{2 q}\) or \(\alpha = \frac{1}{2 q} \wedge \beta > \frac{1}{2 q}\):} Noether's condition~\eqref{eq:feller_sigma} is not satisfied as
\begin{equation*}
\frac{\max_{i \in [1, n]} \sigma_i^{-2 q}}{\sum_{i = 1}^n \sigma_i^{-2 q}} = \frac{\max_{i \in [1, n]} n^{-2 q \alpha} \log(n + 1)^{-2 q \beta}}{\sum_{i = 1}^n n^{-2 q \alpha} \log(n + 1)^{-2 q \beta}},
\end{equation*}
and both enumerator and denominator are of order \(\BigTh(1)\).

\textit{Case \(\alpha = \frac{1}{2 q} \wedge \beta \leq \frac{1}{2 q}\):} The enumerator in Noether's condition still is of order \(\BigTh(1)\), but the denominator is unbounded, thus~\eqref{eq:feller_sigma} is satisfied. From Remark~\ref{rmk:gqsigma}, (1), we recall that, provided \(F_{q, \bsigma} < \infty\)\---which we have\===, \(G_{q, \bsigma}\) exists iff \(F_{2 q, \bsigma}\) exists, and then \(G_{q, \bsigma} = \frac{F_{q, \bsigma}^q}{F_{2 q, \bsigma}^q}\). For \(\beta = \frac{1}{2 q}\) we get
\begin{equation*}
\biggl( \prod_{i = 1}^n \sigma_i \biggr)^{2 q/n} \frac{1}{n} \sum_{i = 1}^n \sigma_i^{-2 q} = \frac{\log(n + 1)}{\ee} \log(\log(n + 1)) (1 + \smallO(1)),
\end{equation*}
hence \(G_{q, \bsigma} = 0\), and for \(\beta < \frac{1}{2 q}\),
\begin{equation*}
\biggl( \prod_{i = 1}^n \sigma_i \biggr)^{2 q/n} \frac{1}{n} \sum_{i = 1}^n \sigma_i^{-2 q} = \frac{\log(n + 1)^{2 q \beta}}{\ee} \, \frac{\log(n + 1)^{-2 q \beta + 1}}{-2 q \beta + 1} (1 + \smallO(1)) = \frac{\log(n + 1)}{\ee (1 - 2 q \beta)} (1 + \smallO(1)),
\end{equation*}
hence again \(G_{q, \bsigma} = 0\).

The calculations concerning \(z\) require more attention. For \(\beta = \frac{1}{2 q}\) we have
\begin{align*}
h_n &= \ee^{1/(2 q)} \Bigl( \frac{1}{2} \Bigr)^{1/q} \, \frac{n^{1/(2 q)} \log(n + 1)^{1/(2 q)}}{\ee^{1/(2 q)}} \Bigl( 1 - \frac{1 + \smallO(1)}{2 q \log(n + 1)} \Bigr)\\
&\quad \cdot \frac{1}{n^{1/q}} \biggl( \frac{n^{1/2} \log(n + 1)^{-1/2}}{1/2} \Bigl( 1 + \frac{1 + \smallO(1)}{\log(n + 1)} \Bigr) \biggr)^{1/q}\\
&= 1 + \frac{1 + \smallO(1)}{2 q \log(n + 1)},
\end{align*}
and therewith (calling \((z_n)_{n \in \N}\) the defining sequence of \(z\))
\begin{equation*}
z_n = \frac{\frac{n^{1/2} \log(n + 1)^{-1/2}}{1/2} \bigl( 1 + \frac{1 + \smallO(1)}{\log(n + 1)} \bigr)}{\log(\log(n + 1))^{1/2} \bigl( 1 + \BigTh(\frac{1}{\log(\log(n + 1))}) \bigr)^{1/2}} \, \frac{1 + \smallO(1)}{2 q \log(n + 1)} = \frac{n^{1/2} \log(n + 1)^{-3/2}}{q \log(\log(n + 1))^{1/2}} (1 + \smallO(1)),
\end{equation*}
and this converges to infinity. For \(\beta < \frac{1}{2 q}\) with \(\beta \neq 0\) we have
\begin{align*}
h_n &= \ee^{1/(2 q)} \Bigl( \frac{1}{2} \Bigr)^{1/q} \, \frac{n^{1/(2 q)} \log(n + 1)^\beta}{\ee^{1/(2 q)}} \Bigl( 1 - \frac{\beta (1 + \smallO(1))}{\log(n + 1)} \Bigr)\\
&\quad \cdot \frac{1}{n^{1/q}} \biggl( \frac{n^{1/2} \log(n + 1)^{-q \beta}}{1/2} \Bigl( 1 + \frac{2 q \beta (1 + \smallO(1))}{\log(n + 1)} \Bigr) \biggr)^{1/q}\\
&= 1 + \frac{\beta (1 + \smallO(1))}{\log(n + 1)},
\end{align*}
leading to
\begin{equation*}
z_n = \frac{\frac{n^{1/2} \log(n + 1)^{-q \beta}}{1/2} \bigl( 1 + \frac{2 q \beta (1 + \smallO(1))}{\log(n + 1)} \bigr)}{\frac{\log(n + 1)^{1/2 - q \beta}}{(1 - 2 q \beta)^{1/2}} \bigl( 1 + \BigO(\frac{1}{\log(n + 1)^{1 - 2 q \beta}}) \bigr)^{1/2}} \, \frac{\beta (1 + \smallO(1))}{\log(n + 1)} = \frac{2 \beta (1 - 2 q \beta)^{1/2} n^{1/2}}{\log(n + 1)^{3/2}} (1 + \smallO(1)),
\end{equation*}
again converging to infinity if \(\beta > 0\), or to negative infinity if \(\beta < 0\). Lastly for \(\beta = 0\) we have (with some \(c \in \R \setminus\{0\}\))
\begin{align*}
h_n &= \ee^{1/(2 q)} \Bigl( \frac{1}{2} \Bigr)^{1/q} \, \frac{n^{1/(2 q)}}{\ee^{1/(2 q)}} \Bigl( 1 + \frac{\log(n) (1 + \smallO(1))}{4 q n} \Bigr) \frac{1}{n^{1/q}} \biggl( \frac{n^{1/2}}{1/2} \Bigl( 1 + \frac{c (1 + \smallO(1))}{n^{1/2}} \Bigr) \biggr)^{1/q}\\
&= 1 + \frac{c (1 + \smallO(1))}{n^{1/2}},
\end{align*}
and consequently
\begin{align*}
z_n = \frac{\frac{n^{1/2}}{1/2} \bigl( 1 + \frac{c (1 + \smallO(1))}{n^{1/2}} \bigr)}{\log(n + 1)^{1/2} \bigl( 1 + \BigO(\frac{1}{\log(n + 1)}) \bigr)^{1/2}} \, \frac{c (1 + \smallO(1))}{n^{1/2}} = \frac{2 c (1 + \smallO(1))}{\log(n + 1)^{1/2}},
\end{align*}
which yields \(z = 0\).

\textit{Case \(\alpha < \frac{1}{2 q}\):} The denominator for Noether's condition now always reads \(\frac{n^{1 - 2 q \alpha} \log(n + 1)^{-2 q \beta}}{1 - 2 q \alpha} \cdot (1 + \smallO(1))\). For \(\alpha \geq 0\) the enumerator grows at most logarithmically, and hence Noether's condition is fulfilled then. For \(\alpha < 0\), \(n^{-2 q \alpha} \log(n + 1)^{-2 q \beta}\) is eventually increasing and unbounded, therefore the whole defining fraction is of order \(\frac{1}{n}\), and Noether's condition is satisfied.

Next, we know \(F_{2 q, \bsigma} = \frac{1}{\ee^\alpha (1 - 2 q \alpha)^{1/(2 q)}}\) exists, which leads to the claimed value of \(G_{q, \bsigma}\).

In the subcase \(\beta \neq 0\) we get
\begin{align*}
h_n &= \ee^\alpha (1 - q \alpha)^{1/q} \, \frac{n^\alpha \log(n + 1)^\beta}{\ee^\alpha} \Bigl( 1 - \frac{\beta (1 + \smallO(1))}{\log(n + 1)} \Bigr)\\
&\quad \cdot \frac{1}{n^{1/q}} \biggl( \frac{n^{1 - q \alpha} \log(n + 1)^{-q \beta}}{1 - q \alpha} \Bigl( 1 + \frac{q \beta (1 + \smallO(1))}{(1 - q \alpha) \log(n + 1)} \Bigr) \biggr)^{1/q}\\
&= 1 + \frac{q \alpha \beta + \smallO(1)}{(1 - q \alpha) \log(n + 1)},
\end{align*}
and therewith
\begin{align*}
z_n &= \frac{\frac{n^{1 - q \alpha} \log(n + 1)^{-q \beta}}{1 - q \alpha} \bigl( 1 + \frac{q \beta + \small(1)}{(1 - q \alpha) \log(n + 1)} \bigr)}{\frac{n^{1/2 - q \alpha} \log(n + 1)^{-q \beta}}{(1 - 2 q \alpha)^{1/2}} \bigl( 1 + \frac{2 q \beta + \smallO(1)}{(1 - 2 q \alpha) \log(n + 1)} \bigr)^{1/2}} \frac{q \alpha \beta + \smallO(1)}{\log(n + 1)}\\
&= \frac{(1 - 2 q \alpha)^{1/2}}{1 - q \alpha} \, \frac{n^{1/2}}{\log(n + 1)} \bigl( q \alpha \beta + \smallO(1) \bigr),
\end{align*}
which converges to plus or minus infinity, with the same sign as \(\alpha \beta\), provided \(\alpha \neq 0\). The subcase \(\alpha = 0 \wedge \beta \neq 0\) requires the more delicate asymptotics of Lemmas~\ref{lem:alpha0betan0} and~\ref{lem:prod_log}; we have
\begin{align*}
h_n &= \log(n + 1)^\beta \Bigl( 1 - \frac{1}{\log(n + 1)} - \frac{1 + \smallO(1)}{2 \log(n + 1)^2} \Bigr)^\beta\\
&\quad \cdot \frac{1}{n^{1/q}} \biggl( n \log(n + 1)^{-q \beta} \Bigl( 1 + \frac{q \beta}{\log(n + 1)} + \frac{q \beta (q \beta + 1) (1 + \smallO(1))}{\log(n + 1)^2} \Bigr) \biggr)^{1/q}\\
&= 1 + \frac{q \beta^2 (1 + \smallO(1))}{2 \log(n + 1)^2},
\end{align*}
and thus
\begin{equation*}
z_n = \frac{n \log(n + 1)^{-q \beta} (1 + \smallO(1))}{n^{1/2} \log(n + 1)^{-q \beta} (1 + \smallO(1))^{1/2}} \, \frac{q \beta^2 (1 + \smallO(1))}{2 \log(n + 1)^2} = \frac{q \beta^2 n^{1/2} (1 + \smallO(1))}{2 \log(n + 1)^2},
\end{equation*}
whence \(z = \infty\) as claimed. For \(\beta = 0\) we have (again with some \(c \in \R \setminus \{0\}\))
\begin{align*}
h_n &= \ee^\alpha (1 - q \alpha)^{1/q} \, \frac{n^\alpha}{\ee^{\alpha}} \Bigl( 1 + \frac{\alpha \log(n) (1 + \smallO(1))}{2 n} \Bigr) \frac{1}{n^{1/q}} \biggl( \frac{n^{1 - q \alpha}}{1 - q \alpha} \Bigl( 1 + \frac{c (1 + \smallO(1))}{n^{\min\{1, 1 - q \alpha\}}} \Bigr) \biggr)^{1/q}\\
&= 1 + \Bigl( \frac{\alpha \log(n)}{2 n} + \frac{c}{q n^{\min\{1, 1 - q \alpha\}}} \Bigr) (1 + \smallO(1)),
\end{align*}
this yields
\begin{align*}
z_n &= \frac{\frac{n^{1 - q \alpha}}{1 - q \alpha} \bigl( 1 + \frac{c (1 + \smallO(1))}{n^{\min\{1, 1 - q \alpha\}}} \bigr)}{\frac{n^{1/2 - q \alpha}}{(1 - 2 q \alpha)^{1/2}} \bigl( 1 + \frac{c' (1 + \smallO(1))}{n^{\min\{1, 1 - 2 q \alpha\}}} \bigr)} \Bigl( \frac{\alpha \log(n)}{2 n} + \frac{c}{q n^{\min\{1, 1 - q \alpha\}}} \Bigr) (1 + \smallO(1))\\
&= \frac{(1 - 2 q \alpha)^{1/2}}{1 - q \alpha} \Bigl( \frac{\alpha \log(n)}{2 n^{1/2}} + \frac{c}{q n^{\min\{1, 1 - q \alpha\} - 1/2}} \Bigr) (1 + \smallO(1)),
\end{align*}
which converges to zero irrespective of \(\alpha\) since in any case \(\min\{1, 1 - q \alpha\} - \frac{1}{2} > 0\) because of \(\alpha < \frac{1}{2 q}\). This completes the proof.
\end{proof}

\bibliographystyle{plain}
\bibliography{volume_intersection_ellipsoids_bib}

\vspace{2\baselineskip}

\noindent
\textsc{Michael Juhos:} Faculty of Computer Science and Mathematics,
University of Passau, Innstra{\ss}e 33, 94032 Passau, Germany

\noindent
\textit{E-mail:} \texttt{michael.juhos@uni-passau.de}

\vspace{0.5\baselineskip}

\noindent
\textsc{Joscha Prochno:} Faculty of Computer Science and Mathematics,
University of Passau, Innstra{\ss}e 33, 94032 Passau, Germany

\noindent
\textit{E-mail:} \texttt{joscha.prochno@uni-passau.de}

\end{document}